\documentclass[mnsc]{informs_nojournal}
\RequirePackage{tgtermes}
\RequirePackage{newtxtext}
\RequirePackage{newtxmath}
\RequirePackage{bm}
\RequirePackage{endnotes}

\DoubleSpacedXII

\usepackage{algorithm}
\usepackage{algpseudocode}
\usepackage{float}
\usepackage{tikz}
\usepackage{bbm}
\usepackage{graphicx}
\usepackage{subcaption}
\usepackage{verbatim}
\usepackage{hyperref}

\usepackage{natbib}
 \bibpunct[, ]{(}{)}{,}{a}{}{,}%
 \def\bibfont{\small}%
\EquationsNumberedThrough    

\TheoremsNumberedThrough     
\ECRepeatTheorems  %

\MANUSCRIPTNO{MOOR-0001-2024.00}

\newcommand{\EE}{\mathbb{E}}

\newcommand{\cP}{\mathcal{P}}
\newcommand{\one}{\mathbf{1}}
\newcommand{\bbR}{\mathbb{R}}
\newcommand{\KL}{\mathrm{KL}}

\newcommand{\Var}{\mathrm{Var}}

\newcommand{\wdfeo}{W^{\textnormal{ER-O}}_{\textnormal{DF}}}
\newcommand{\wdfer}{W^{\textnormal{ER-R}}_{\textnormal{DF}}}

\newcommand{\tv}{d_{\mathrm{TV}}}
\newcommand{\lspo}{\ell_{\textnormal{SPO}}}

\newcommand{\ff}{\mathfrak{f}}

\newcommand{\wspo}{W_{\textnormal{SPO}}}
\newcommand{\wdfo}{W^{\textnormal{O}}_{\textnormal{DF}}}
\newcommand{\wdfr}{W^{\textnormal{R}}_{\textnormal{DF}}}

\begin{document}


\RUNAUTHOR{ Suhan Liu and Mo Liu}

\RUNTITLE{Decision-Focused Optimal Transport}

\TITLE{Decision-Focused Optimal Transport}

\ARTICLEAUTHORS{%
\AUTHOR{Suhan Liu}
\AFF{Department of Statistics and Operations Research,
University of North Carolina at Chapel Hill, \EMAIL{suhanliu@unc.edu}}

\AUTHOR{Mo Liu}
\AFF{Department of Statistics and Operations Research,
University of North Carolina at Chapel Hill, \EMAIL{mo\_liu@unc.edu}}
} 

\ABSTRACT{%
We propose a fundamental metric for measuring the distance between two distributions. This metric, referred to as the decision-focused (DF) divergence, is tailored to stochastic linear optimization problems in which the objective coefficients are random and may follow two distinct distributions. Traditional metrics such as KL divergence and Wasserstein distance are not well-suited for quantifying the resulting cost discrepancy, because changes in the coefficient distribution do not necessarily change the optimizer of the underlying linear program. Instead, the impact on the objective value depends on how the two distributions are coupled (aligned). Motivated by optimal transport, we introduce decision-focused distances under several settings, including the optimistic DF distance, the robust DF distance, and their entropy-regularized variants. We establish connections between the proposed DF distance and classical distributional metrics. For the calculation of the DF distance, we develop efficient computational methods. We further derive sample complexity guarantees for estimating these distances and show that the DF distance estimation avoids the curse of dimensionality that arises in Wasserstein distance estimation. The proposed DF distance provides a foundation for a broad range of applications. As an illustrative example, we study the interpolation between two distributions. Numerical studies, including a toy newsvendor problem and a real-world medical testing dataset, demonstrate the practical value of the DF distance.
}%




\KEYWORDS{decision-focused divergence, optimal transport, interpolation, distributional coupling, stochastic optimization} 

\maketitle

\section{Introduction}\label{sec:Intro}

Measuring the distance between two probability distributions is a fundamental problem in statistics and machine learning. A well-designed distance metric is a critical tool for clustering, interpolation, kernel weighting, anomaly detection, and many other tasks. Classical choices include the \emph{total variation} (TV) distance and the \emph{Kullback--Leibler} (KL) divergence. In particular, given two probability measures $P$ and $Q$ on a measurable space $(\Omega,\mathcal{F})$ that admit densities $p$ and $q$ with respect to a common dominating measure $\mu$, the total variation distance is defined as
\begin{equation*}
    d_{\mathrm{TV}}(P,Q)
    := \sup_{A\in\mathcal{F}} \big|P(A)-Q(A)\big|
    = \frac12 \int_{\Omega} |p(x)-q(x)|\, d\mu(x).
\end{equation*}
The KL divergence is defined as
\begin{equation*}
    D_{\mathrm{KL}}(P\|Q)
    := \int_{\Omega} \log\!\left(\frac{dP}{dQ}\right)\, dP
    = \int_{\Omega} p(x)\log\!\left(\frac{p(x)}{q(x)}\right)\, d\mu(x),
\end{equation*}
with the convention that $D_{\mathrm{KL}}(P\|Q)=+\infty$ if $P$ is not absolutely continuous with respect to $Q$.

A key limitation of TV distance and KL divergence is that they compare probability mass \emph{at the same location}. Consequently, these metrics can become ill-behaved or uninformative under \emph{support mismatch}. For example, if there exists a measurable set $A\in\mathcal{F}$ such that $P(A)>0$ but $Q(A)=0$, then $D_{\mathrm{KL}}(P\|Q)=+\infty$, while $d_{\mathrm{TV}}(P,Q)$ can saturate. This behavior motivates distances that incorporate the geometry of $\Omega$.

A prominent alternative is the \emph{Wasserstein distance}, defined via optimal transport. Suppose $\Omega$ is equipped with a norm $\|\cdot\|$ (e.g., $\Omega\subseteq \mathbb{R}^d$). For $p\ge 1$, the $p$-Wasserstein distance between $P$ and $Q$ is
\begin{equation}\label{eq:wp_def}
    W_p(P,Q)
    := \left(
    \inf_{\pi\in\Pi(P,Q)}
    \int_{\Omega\times\Omega} \|x-y\|^p \, d\pi(x,y)
    \right)^{1/p},
\end{equation}
where $\Pi(P,Q)$ denotes the set of all \emph{couplings} of $P$ and $Q$, i.e., probability measures on $\Omega\times\Omega$ whose marginals are $P$ and $Q$. In contrast to TV distance and KL divergence, Wasserstein distances quantify \emph{how far} probability mass must be transported to transform one distribution into the other. This geometric viewpoint naturally accommodates support mismatch and aligns with the interpretation of ``transporting uncertainty''.

Beyond providing a single scalar measure of discrepancy, the optimal transport problem in \eqref{eq:wp_def} also returns an optimizer $\pi^\star\in\Pi(P,Q)$, which reveals how probability mass under $P$ is matched to probability mass under $Q$. In the special case where there exists a deterministic mapping $T:\Omega\to\Omega$ such that, if $X\sim P$ then $T(X)\sim Q$, the optimal coupling is induced by $T$ and is often called an optimal transport map. In general, however, $\pi^\star$ represents a many-to-many, probabilistic correspondence between points drawn from $P$ and points drawn from $Q$, and can be interpreted as a ``least-cost'' way to transform $P$ into $Q$ under the transportation cost $|x-y|^p$. This coupling viewpoint has proved useful in modeling diffusion and mixing phenomena, quantifying distributional dynamics over time, and characterizing distribution shift.

\subsection{Decision-focused setting}
\label{sec:decision_focused_setting}

In many operations research applications, uncertain quantities enter a downstream optimization problem as objective
coefficients or constraint parameters. In such stochastic optimization problems, from the \emph{decision-focused} settings, the goal of comparing two distributions is not purely geometric. Instead, it is to quantify how distributional changes affect the optimizer
and the resulting objective value. This perspective highlights a key limitation of classical distributional metrics,
including $d_{\mathrm{TV}}$, $D_{\mathrm{KL}}$, and $W_p$. These metrics quantify discrepancies between probability
measures on $\Omega$, but they do not directly reflect the fact that substantial changes in the distribution may
leave the optimizer unchanged, while small geometric changes may move probability mass across decision boundaries and
lead to a disproportionate decision impact.

To motivate a distance metric aligned with decision regret, we consider a medical testing example in which we aim to quantify the distance between two distributions of \emph{bone mineral density} (BMD) across two age groups.

\paragraph{\bfseries Preventive healthcare and diagnostic testing.}
Let us consider the evolution of BMD within a population, which can inform treatment planning for diseases such as osteoporosis. Suppose the hospital has
empirical distributions of BMD for two demographic groups, age $40$ and age $50$, measured from historical cohorts.
These two distributions are illustrated in the top and bottom rows of Figure~\ref{fig:intro_bmd}, respectively.
Suppose patients at age $40$ receive a treatment plan optimized for their current BMD value. The hospital's question
is whether this plan should be maintained until age $45$: given the potential evolution of BMD, is the current
treatment still optimal at age $45$? If not, what is the resulting suboptimality gap? This question is challenging because, when decisions are made at age $40$, future measurements at age $45$ for the same cohort are not available. Therefore,
constructing a principled forecast for the intermediate-age distribution and evaluating its decision consequences are
both critical.

Estimating the optimal treatment adjustment is nontrivial because the decision must satisfy feasibility and safety
constraints (e.g., upper and lower dosage limits, total budget constraints, and drug--drug interaction constraints).
Consequently, the optimal decision can change discontinuously with respect to BMD and other clinical indicators
(e.g., BMI, lab results, and comorbidity scores).

\paragraph{Why coupling matters.}
To forecast outcomes at age $45$, it is not enough to specify only the marginal distributions at ages $40$ and $50$.
The hospital must also reason about how individuals and subpopulations evolve from age $40$ to age $50$. This
evolution is captured by a \emph{coupling} between the two marginal distributions. Different couplings can imply
very different conditional behaviors, and therefore substantially different decision risks. Given a coupling between
ages $40$ and $50$, an interpolation (e.g., a McCann interpolation) induces a corresponding distribution at age $45$.

Concretely, a coupling enables two complementary decision questions:
\begin{itemize}
    \item \textbf{Individual level:}
    Conditional on a patient's observed BMD at age $40$, what is a plausible distribution of this patient's features at
    age $45$ (and age $50$), and how does this affect the recommended treatment?
    \item \textbf{Population level:}
    Conditional on the age-$40$ BMD distribution for a subpopulation (e.g., a clinic or geographic region), what is a
    plausible distribution at age $45$, and what is the resulting distribution of decisions and costs?
\end{itemize}

\paragraph{Three interpolations via couplings.}
Figure~\ref{fig:intro_bmd} visualizes three representative couplings between the age-$40$ and age-$50$ BMD
distributions. Each coupling implies a different set of predicted patient trajectories and can induce substantially
different decision outcomes. In Figure~\ref{fig:bmd_opt}, each individual's BMD is shifted by the smallest amount
necessary to match the two marginals; consequently, if the age-$40$ treatment is maintained, the implied decision
risk is relatively small. In contrast, Figure~\ref{fig:bmd_worst} depicts an alignment that maximizes transport,
suggesting that the age-$40$ treatment is likely to be suboptimal at age $50$, and that a new BMD test and treatment
adjustment may be needed. Figure~\ref{fig:bmd_iid} represents an ``independent'' scenario, where the age-$50$ BMD distribution is the same for all individuals, regardless of their BMD at age $40$. In real-world settings, as shown in Section~\ref{sec:realdata}, Figure~\ref{fig:bmd_opt} most closely reflects the true dynamics, whereas the independent case in Figure \ref{fig:bmd_iid} can overestimate the average decision loss.

\begin{figure}[t]
    \centering
    \begin{subfigure}[t]{0.32\linewidth}
        \centering
        \includegraphics[width=\linewidth]{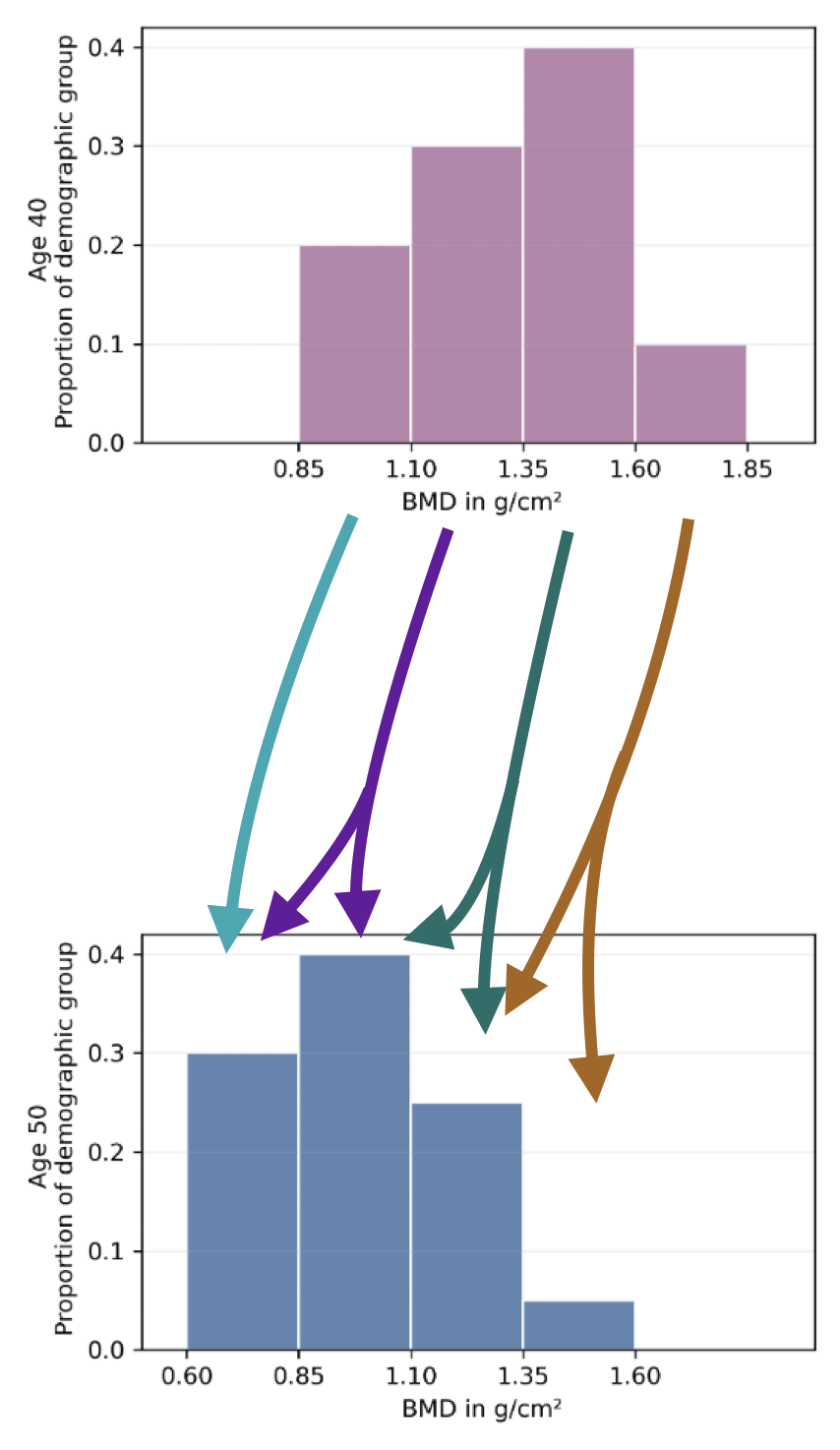}
        \caption{Optimistic coupling (best-case alignment).}
        \label{fig:bmd_opt}
    \end{subfigure}
    \hfill
    \begin{subfigure}[t]{0.32\linewidth}
        \centering
        \includegraphics[width=\linewidth]{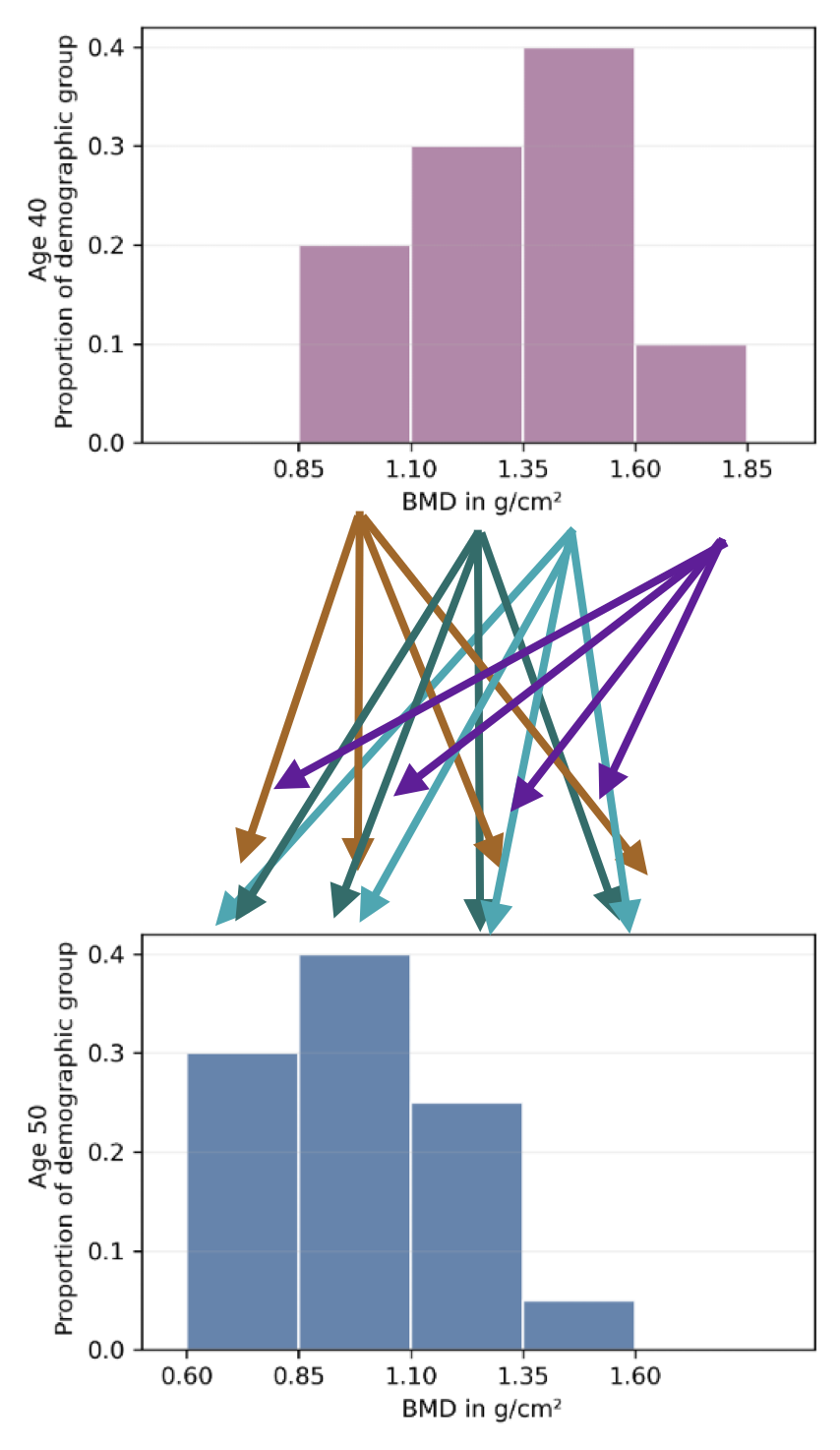}
        \caption{Independent coupling (no persistence).}
        \label{fig:bmd_iid}
    \end{subfigure}
    \hfill
    \begin{subfigure}[t]{0.32\linewidth}
        \centering
        \includegraphics[width=\linewidth]{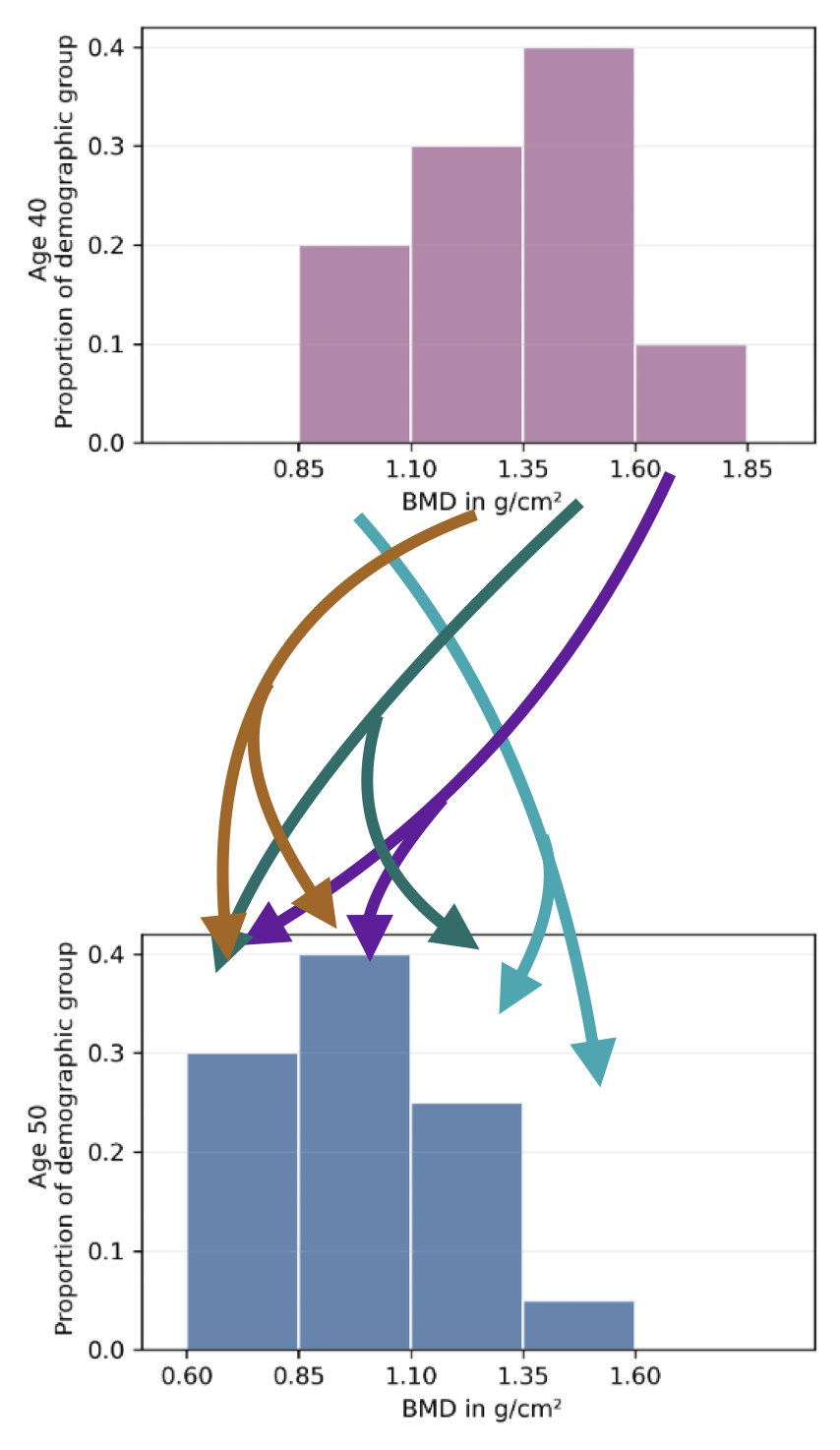}
        \caption{Robust coupling (worst-case alignment).}
        \label{fig:bmd_worst}
    \end{subfigure}
    \caption{Three couplings between age $40$ and age $50$ BMD distributions. The arrows illustrate how probability
    mass at age $40$ is matched to mass at age $50$. Different couplings lead to different conditional trajectories
    and can induce substantially different decision risks. Figure~\ref{fig:bmd_opt} corresponds to a minimal-change
    alignment (small individual-level shifts), Figure~\ref{fig:bmd_worst} corresponds to a maximal-change alignment
    (large individual-level shifts), and Figure~\ref{fig:bmd_iid} corresponds to an independent scenario in which each
    individual shares the same age-$50$ target distribution, regardless of their BMD at age $40$.}
    \label{fig:intro_bmd}
\end{figure}

Beyond summarizing population shift, the couplings in Figure~\ref{fig:intro_bmd} also induce patient-level
conditional forecasts. Figure~\ref{fig:bmd_one_patient} illustrates this effect for a representative patient with
BMD $=1.7$ at age $40$. Under an optimistic coupling, the patient is matched to the closest BMD outcome at age $50$;
under an independent coupling, the patient's BMD evolves toward the marginal age-$50$ distribution without
individual-level persistence; under a robust coupling, the patient is matched to the farthest BMD outcome. These
patient-level differences can translate into distinct downstream actions and costs, even though the marginal
distributions at ages $40$ and $50$ are identical across the three scenarios.

\begin{figure}[H]
    \centering
    \includegraphics[width=0.7\linewidth]{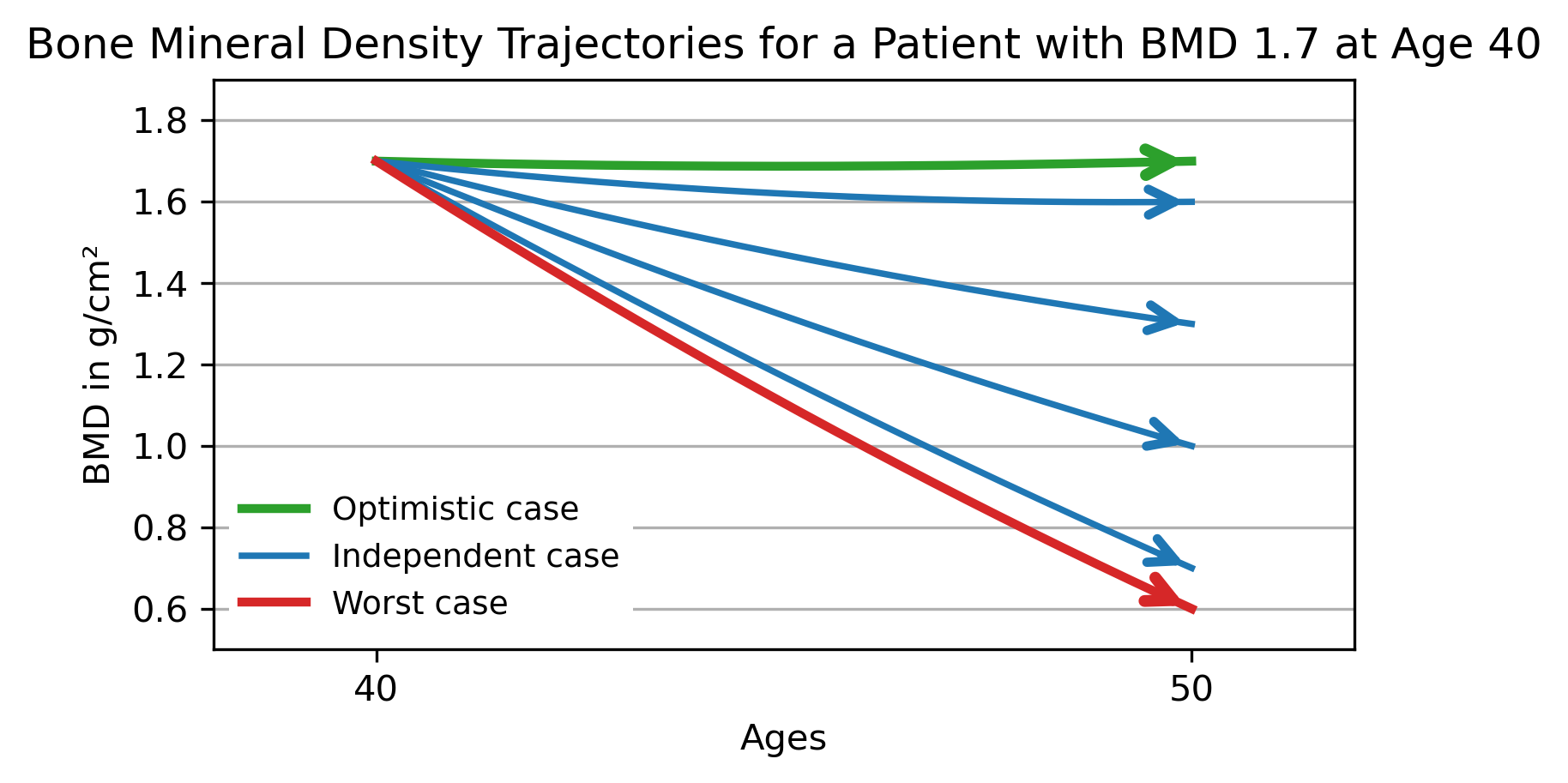}
    \caption{Patient-level trajectories implied by different coupling assumptions. The figure shows plausible BMD paths for a patient with BMD $=1.7$ at age $40$ under an optimistic coupling (best-case alignment), an independent coupling, and a robust coupling (worst-case alignment).}
    \label{fig:bmd_one_patient}
\end{figure}

\paragraph{Decision-focused average at age $45$.}
For the current age-$40$ cohort, suppose that the hospital must decide whether to allocate resources to measure them again at age $45$ before deploying a treatment or testing policy. Decision-focused interpolation (decision-focused average between ages $40$ and $50$) enables a comparison of plausible intermediate risks implied by different coupling assumptions. If the risk exceeds a clinical tolerance threshold, then new measurements at age $45$ are warranted. Otherwise, the hospital can justify proceeding with a policy derived from historical cohorts. This motivates the need for a
decision-focused distance notion that evaluates distributional discrepancy through the lens of downstream
optimization, rather than purely through geometric comparisons.

Beyond healthcare, decision-focused divergences have wide applications in \textit{personalized recommendations, insurance pricing, inventory allocation}, and other OR problems. For example, in the newsvendor problem, as long as the critical quantile remains unchanged, many distribution shifts yield zero decision impact and should therefore correspond to a zero (or near-zero) decision-focused divergence.

Classical divergences such as $d_{\mathrm{TV}}$ and $D_{\mathrm{KL}}$ depend only on marginal distributions and compare
probability mass at the same location; they do not encode how individuals transition over time. The Wasserstein
distance $W_p$ improves upon this by producing a coupling, but the coupling is selected to minimize a geometric
transportation cost, which may be misaligned with the downstream decision structure. Two distributions can be far in
$W_p$ while inducing nearly identical optimizers, and conversely, a small geometric shift can have a large decision
impact if it moves mass across regions with different optimal decisions.
The main contributions of this paper are summarized as follows:
\begin{itemize}
    \item We propose a new metric, termed the \textit{decision-focused divergence}, for measuring the decision discrepancy between two distributions of objective coefficient vectors in stochastic linear optimization. We study several variants, including the optimistic DF distance, the robust DF distance, and their entropy-regularized versions. The proposed metric provides a foundation for a broad range of future research directions, including interpolation, anomaly detection, uncertainty set construction, kernel regression, and clustering.
    \item We establish key theoretical properties of the DF distance, including Lipschitz-type bounds in terms of $W_1$, $W_2$, $D_{\mathrm{KL}}$, and $d_{\mathrm{TV}}$, as well as sandwich inequalities for the entropy-regularized variants. (Section \ref{sec:property})
    \item We develop efficient computational methods for evaluating the DF divergence by showing an equivalent transformation to a $W_2$-type distance. (Section \ref{sec:calculation})
    \item While the sample complexity of estimating Wasserstein distances typically deteriorates exponentially with the dimension of distributions, we show that the proposed DF distance admits an $\mathcal{O}(1/\sqrt{n})$ estimation rate that is independent of dimension. This improvement arises because the downstream feasible region has finitely many extreme points. (Section \ref{sec:sample_complexity})

    \item As an illustrative application, we show how the decision-focused divergence can be used to construct McCann-type interpolations between distributions. These decision-focused interpolations provide a better estimate of intermediate distributional dynamics than decision-blind alternatives (Section~\ref{sec:interpolant}).
    \item We conduct numerical experiments in two settings: (i) a toy newsvendor example and (ii) a real-world Parkinson’s disease study. These experiments demonstrate the practical value of the proposed DF distance and show that traditional metrics can substantially overestimate decision risk by ignoring couplings.
\end{itemize}

\paragraph{Organization of the paper.}
In Section~\ref{sec:Method}, we formally define the decision-focused divergence, including symmetric variants and
entropy-regularized versions. In Section~\ref{sec:property}, we establish key properties of the proposed divergence,
including its connections to $\tv$, $D_{\mathrm{KL}}$, and the Wasserstein distance $W_p$. Building on these results,
Section~\ref{sec:calculation} develops efficient computational methods for evaluating the associated coupling, and
Section~\ref{sec:sample_complexity} derives dimension-independent convergence rates for estimating the divergence.
In Section~\ref{sec:interpolant}, we illustrate an illustration via an interpolation problem motivated by medical
testing, and Section~\ref{sec:numerical} evaluates the proposed DF distance in two settings: A toy example of the newsvendor problem in Section \ref{sec:illustrative_examples} and real-world Parkinson’s disease treatment in Section \ref{sec:realdata}. Finally,
Section~\ref{sec:Conclusion} concludes the paper.

\subsection{Literature review}

Since this work is the first to propose a distributional divergence tailored to a stochastic optimization setting, we organize the literature review around two streams: (i) decision-focused learning for stochastic optimization and (ii)
distributional distance measures based on optimal transport.

\paragraph{Decision-focused learning for stochastic linear optimization.}
Decision-focused learning (also referred to as decision-aware, task-based, or end-to-end learning) is motivated by the observation that improved prediction accuracy does not necessarily lead to improved downstream decision quality in
stochastic optimization. This decision-focused learning has attracted growing attention for solving stochastic optimization in recent years; see \citet{mandi2024decision,sadana2025survey} for comprehensive surveys. We highlight two subsets of related
papers. The first subset focuses on estimating the full information of the uncertainty. \citet{bertsimas2020predictive,elmachtoub2023estimate,elmachtoub2025dissecting} study the sample average approximation (SAA) in the decision-focused setting. Robust formulations that jointly
account for estimation and optimization have been studied in, e.g., \citet{zhu2022joint}. \citet{iceo} considers
estimating right-hand-side uncertainty in constraints by incorporating decision loss, and \citet{cristian2025efficient}
addresses computational challenges by proposing training methods that approximate the optimization oracle solver. The second subset focuses on constructing point-wise predictions via decision-aligned losses. \citet{elmachtoub2022smart} and \citet{huang2024decision} propose surrogate loss functions and establish consistency under various conditions.
\citet{homem2024forecasting,er2025decision} characterize the existence of decision-unbiased predictions. Statistical guarantees and comparisons with decision-blind
approaches are provided in \citet{elmachtoub2023estimate,el2022generalization,hu2022fast,liu2023active}.

The above literature primarily focuses on learning under a fixed distribution and does not explicitly model
\emph{distributional dynamics} in a decision-focused manner. In contrast, our work studies how distributions evolve and how such evolution should be measured through the lens of downstream linear optimization, using tools inspired by optimal transport.

\paragraph{Distributional distances and optimal transport.}
A rich set of metrics and divergences has been proposed to quantify discrepancies between probability
distributions. In one dimension, CDF-based distances such as the Kolmogorov--Smirnov (KS) distance and the
Cram\'er--von Mises (CvM) distance are widely used. In higher dimensions, when distributions share a common support,
the total variation (TV) distance provides a strong notion of discrepancy. Asymmetric divergences defined via density ratios, such as the KL divergence and the chi-square ($\chi^2$) divergence, are also standard
choices under common support. In the OR community, KL-based uncertainty sets have been used in distributionally
robust optimization; see, e.g., \cite{bennouna2022holistic}.

Compared with the above metrics, the Wasserstein distance has two distinguishing features: (i) it is defined through
an optimal transport problem and thus induces an explicit coupling that describes how probability mass is moved from
one distribution to another, and (ii) it naturally accommodates support mismatch. The Wasserstein distance has been widely used in the OR literature because, for many objective functions, their expected objective value is Lipschitz with respect to the Wasserstein distance between the underlying uncertainty distributions (e.g. \cite{blanchet2019quantifying}); for an overview of methods and applications, see \cite{kuhn2019wasserstein}. Using Wasserstein-based uncertainty sets, recent work has developed strong guarantees and
finite-sample analysis for distributionally robust optimization; see, e.g.,
\cite{blanchet2022optimal, ho2023strong, gao2024wasserstein, gao2023finite}.

The Wasserstein distance in the above literature is inherently \emph{decision-blind}: it captures geometric discrepancy between distributions without accounting for the structure of the downstream optimization problem. In contrast, the proposed decision-focused divergence incorporates the geometry of the feasible region of a linear program and is therefore directly aligned with decision impact. This alignment enables decision-focused clustering, interpolation, kernel regression, and other applications where the relevant notion of distributional proximity is governed by downstream decisions.

\section{Decision-Focused Divergence}\label{sec:Method}

Let $\mu$ and $\nu$ be probability measures on $\bbR^d$, and let
$c:\bbR^d\times\bbR^d\to[0,\infty)$ be a measurable cost function. A \emph{coupling} of $\mu$ and $\nu$ is a
probability measure $\gamma$ on $\bbR^d\times\bbR^d$ whose marginals are $\mu$ and $\nu$; we denote the set of all
such couplings by $\Gamma(\mu,\nu)$ (equivalently, $\Gamma_{\mu,\nu}$). In particular, $\gamma\in\Gamma(\mu,\nu)$ if
and only if, for every bounded measurable function $f:\bbR^d\to\bbR$,
\[
\int f(x)\,\gamma(dx,dy)=\int f\,d\mu
\quad\text{and}\quad
\int f(y)\,\gamma(dx,dy)=\int f\,d\nu .
\]
The (Kantorovich) optimal transport problem seeks a coupling that minimizes the expected transport cost,
\begin{equation}\label{equ:kantor}
\inf_{\gamma\in\Gamma(\mu,\nu)} \int_{\bbR^d\times\bbR^d} c(x,y)\,\gamma(dx,dy).
\end{equation}
Any $\gamma\in\Gamma(\mu,\nu)$ can be interpreted as the joint law of a pair $(X,Y)$ with marginals $X\sim\mu$ and
$Y\sim\nu$.
A classical choice is the power cost $c(x,y)=\|x-y\|^p$, which leads to the $W_p$ distance in \eqref{eq:wp_def}. As a
special case, when $\mu$ and $\nu$ have finite second moments, the corresponding optimal transport value defines the
quadratic Wasserstein distance
\begin{equation}\label{WD2}
    W_2(\mu,\nu)
    =\inf_{\gamma\in\Gamma(\mu,\nu)}
    \left(\int_{\bbR^d\times\bbR^d}\|x-y\|_2^2\,\gamma(dx,dy)\right)^{1/2}.
\end{equation}

In decision-focused learning, the discrepancy between $x$ and $y$ should not be purely geometric: what matters is
how much the downstream decision quality degrades if we optimize for $x$ but face $y$. To formalize this, we consider a downstream stochastic linear optimization problem. Let $X$ and $Y\in\bbR^d$ be random
cost (coefficient) vectors, and let $S\subset\bbR^d$ be a nonempty compact polyhedron representing the feasible
region. When the cost vector $X$ follows distribution $\mu$, the downstream problem chooses $w\in S$ to minimize the expected objective $\EE_{\mu}[X^\top w]$.
 For a realized cost vector $x\in\bbR^d$,
the corresponding optimal decision is obtained by the optimization oracle $w^*(\cdot):\bbR^d\to\bbR^d$,
\begin{equation}\label{eq:downstream_oracle}
    w^*(x)\in\arg\min_{w\in S} w^\top x.
\end{equation}
Because $S$ is polyhedral, $w^*(\cdot)$ can be chosen to take values in the (finite) set of extreme points of $S$ and
is piecewise constant over a polyhedral partition of $\bbR^d$. As we show later, this piecewise-constant structure is a key property for deriving sharper estimation-error bounds for decision-focused transport.

To formally capture the decision discrepancy between two cost vectors $x,y\in\bbR^d$, we adopt the \textit{Smart Predict--then--Optimize (SPO)} loss, defined as
$\lspo(x,y):=y^\top w^*(x)-y^\top w^*(y)$, i.e., the objective difference between the decisions of $w^*(x)$ and $w^*(y)$, when the true vector is $y$. The SPO loss is nonnegative and was introduced in \cite{elmachtoub2022smart} in a contextual setting.

Using the SPO loss $\lspo(x,y)$ as the transport cost $c(x,y)$ in \eqref{equ:kantor} yields a decision-focused
transport objective tailored to the downstream stochastic linear optimization problem. Since the SPO loss is
generally asymmetric, i.e., $\lspo(x,y)\neq\lspo(y,x)$, the resulting transport-based discrepancy is also asymmetric.
We therefore define it as a \emph{divergence}.

\begin{definition}[Decision-focused divergence]\label{def:df_div}
Given $\mu,\nu$ and a coupling $\gamma\in\Gamma(\mu,\nu)$, the \emph{decision-focused divergence} (DF divergence) is
\begin{equation}\label{DWD}
    \wspo(\mu,\nu;\gamma)
    :=\int_{\bbR^d\times\bbR^d}\lspo(x,y)\,\gamma(dx,dy).
\end{equation}
\end{definition}

The DF divergence depends on the joint law $\gamma$ (not only on the marginals). In particular, if $X\sim\mu$ and
$Y\sim\nu$ are independent, then $\gamma=\mu\otimes\nu$, the product (independent) coupling of $\mu$ and $\nu$, and \eqref{DWD} reduces to the classical expected regret
\begin{equation}\label{eq:regret_def}
    R(\mu,\nu)
    :=\EE\big[\lspo(X,Y)\big]
    =\wspo(\mu,\nu;\mu\otimes\nu).
\end{equation}
Although the regret \eqref{eq:regret_def} has been widely used to evaluate decision loss, the independence
assumption behind $\mu\otimes\nu$ can be inappropriate when there is strong dependence between $X$ and $Y$. For example, if a patient's BMD at age $40$ is below average (e.g., $0.6$), it is unlikely that this patient's BMD will exceed $1.6$ at age $50$, even if values above $1.6$ are common in the overall population. In such settings, the joint distribution, i.e., the coupling between $\mu$ and $\nu$, should be modeled carefully. Varying $\gamma$ over $\Gamma(\mu,\nu)$ corresponds to varying how outcomes drawn from $\mu$ are aligned with outcomes drawn from $\nu$, which can substantially change the average decision loss. Our numerical study on real-world data further shows that ignoring dependence can significantly overestimate decision loss in practice. This motivates the following extremal coupling-based quantities.

\begin{definition}[Decision-focused distances]\label{def:df_dist}
Given two measures $\mu$ and $\nu$, the \emph{optimistic} and \emph{robust} decision-focused distances are defined by
\begin{align}
    \wdfo(\mu,\nu)
    &:=\inf_{\gamma\in\Gamma(\mu,\nu)}\wspo(\mu,\nu;\gamma),
    \tag{Optimistic DF distance}\label{WDFO}\\
    \wdfr(\mu,\nu)
    &:=\sup_{\gamma\in\Gamma(\mu,\nu)}\wspo(\mu,\nu;\gamma).
    \tag{Robust DF distance}\label{WDFR}
\end{align}
\end{definition}
Since $\mu\otimes\nu\in\Gamma(\mu,\nu)$, we always have $0\le\wdfo(\mu,\nu)\le R(\mu,\nu)\le \wdfr(\mu,\nu)$. As
suggested by their names, \ref{WDFO} corresponds to the best-case (most favorable) alignment between $\mu$ and
$\nu$, while \ref{WDFR} corresponds to the worst-case (least favorable) alignment and thus a conservative
assessment of decision risk. 
The distances $\wdfo(\mu,\nu)$ and $\wdfr(\mu,\nu)$ are generally asymmetric in $(\mu,\nu)$. This asymmetry arises because evaluating decisions derived from $\mu$ under $\nu$ differs from evaluating decisions derived from $\nu$ under $\mu$.
In practice, we can further consider two symmetric variants: the additive symmetrization
\begin{equation*}
W^O_{\mathrm{sym}}(\mu,\nu)
:=\wdfo(\mu,\nu)+\wdfo(\nu,\mu),\qquad
W^R_{\mathrm{sym}}(\mu,\nu)
:=\wdfr(\mu,\nu)+\wdfr(\nu,\mu),
\end{equation*}
and a Jensen--Shannon type symmetrization based on the midpoint mixture $m:=(\mu+\nu)/2$,
\begin{equation*}
W^O_{\mathrm{JS}}(\mu,\nu)
:=\tfrac{1}{2}\wdfo(\mu,m)+\tfrac{1}{2}\wdfo(\nu,m),\qquad
W^R_{\mathrm{JS}}(\mu,\nu)
:=\tfrac{1}{2}\wdfr(\mu,m)+\tfrac{1}{2}\wdfr(\nu,m).
\end{equation*}
The paper primarily focuses on the asymmetric notion in Definition~\ref{def:df_dist}, since the main properties extend naturally to the corresponding symmetric variants. Accordingly, we refer to Definition~\ref{def:df_dist} as a \textit{distance} rather than a divergence. 

One of the motivations for \ref{WDFO} is that, in real-world settings (e.g., disease progression or market changes), traditional regret notions can lead $R(\mu,\nu)$ to substantially overestimate the average decision loss; see Section~\ref{sec:numerical}. In practice, $\mu$ and $\nu$ are often correlated (for instance, an individual patient’s status typically changes gradually even when the population-level distribution varies considerably). In such cases, \ref{WDFO} provides a more accurate estimate of decision loss.
The motivation for \ref{WDFR} follows from the robust perspective in operations research decision making, which are detailed in Appendix \ref{append:robust_motivation}. To further motivate these definitions, we present a geometric example in Section~\ref{sec:subexample}.

\subsection{Examples of decision-focused divergence}\label{sec:subexample}

\begin{figure}[h]
    \centering
    \includegraphics[width = 0.48\linewidth]{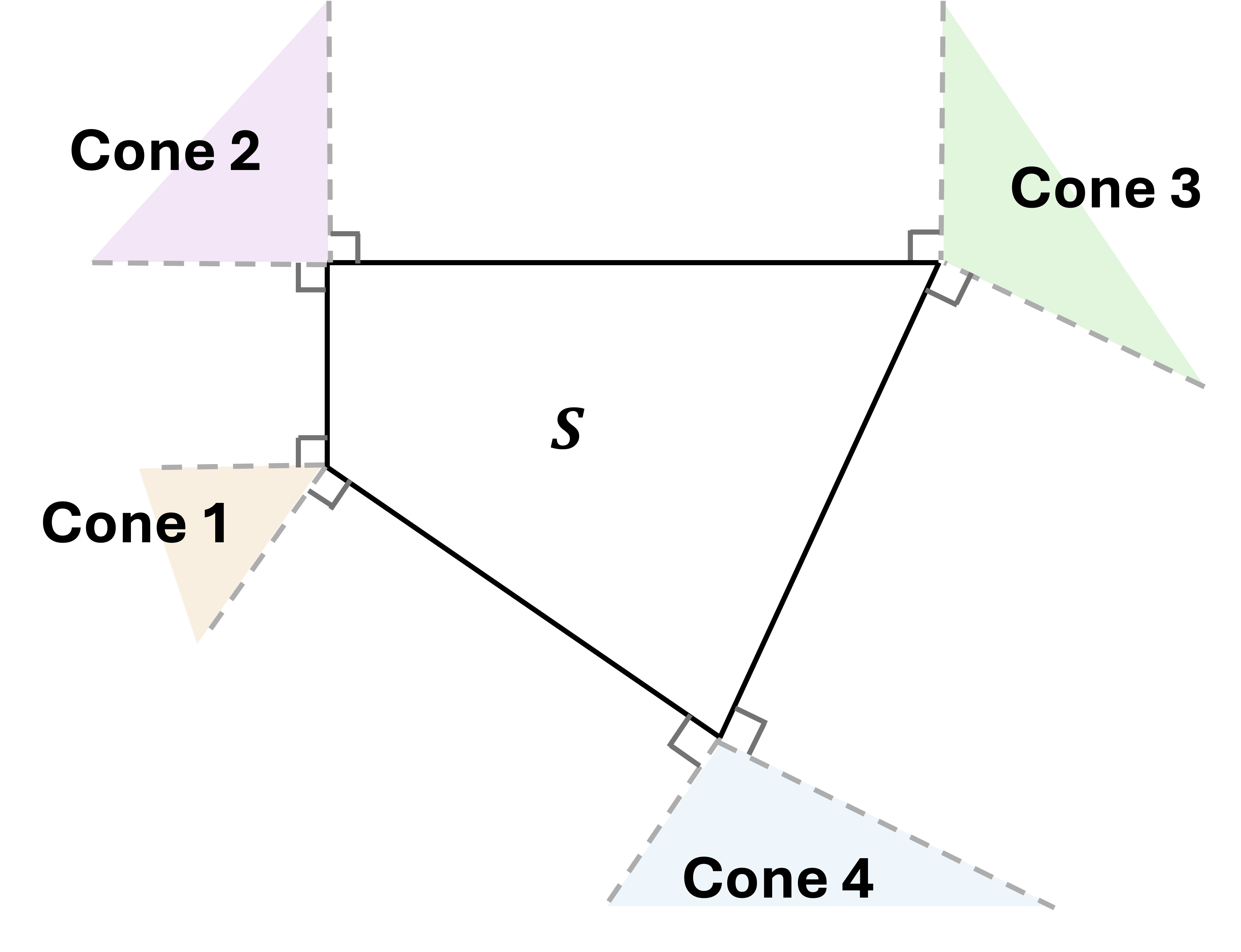}
    \includegraphics[width=0.48\linewidth]{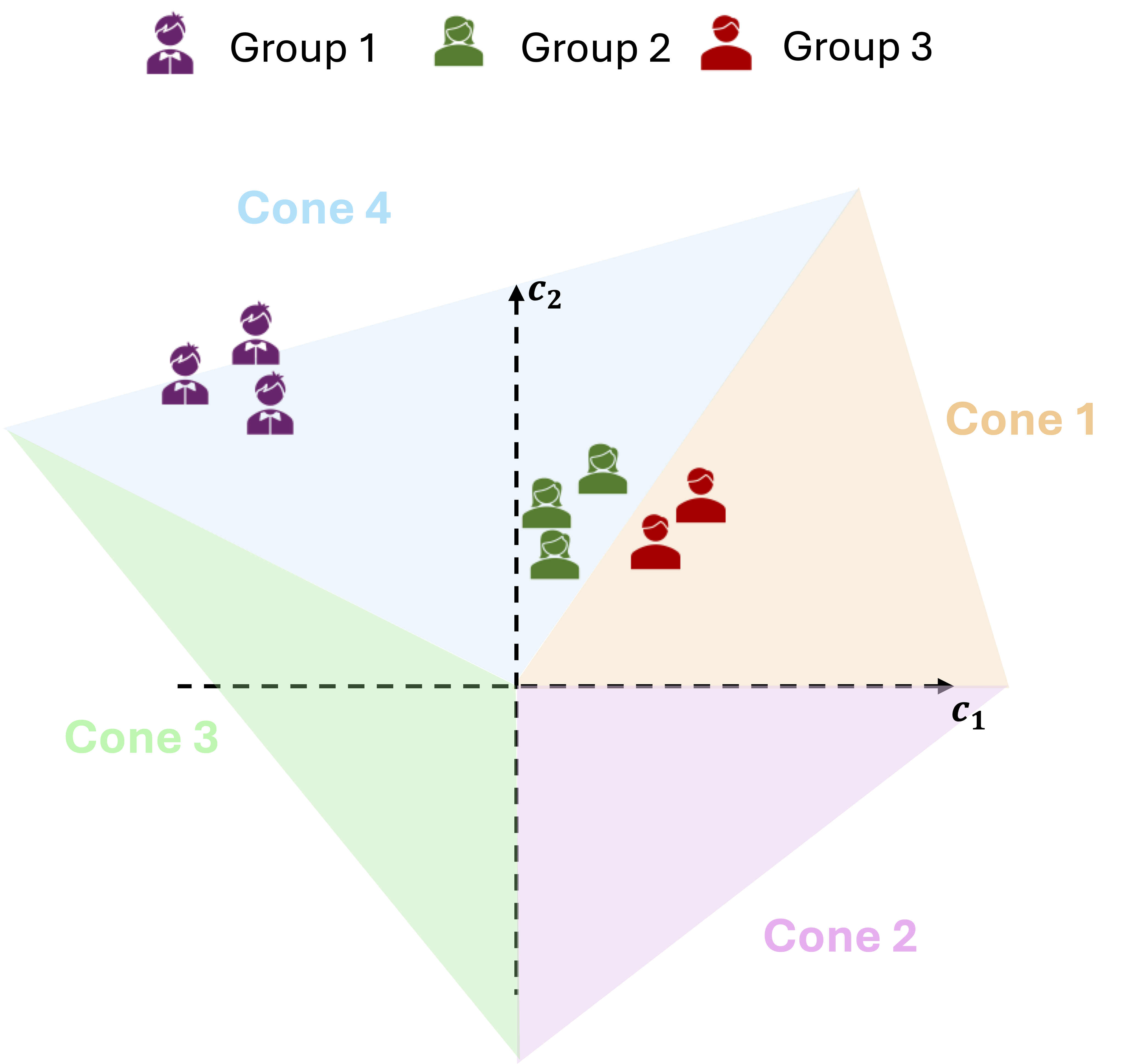}
    \caption{\textbf{Left:} a polyhedral feasible region $S$ with four extreme points and their associated normal cones.
    \textbf{Right:} the point clouds illustrate three example groups of cost vectors. The DF distance between Groups 1 and 2 is zero, while the DF distance between Groups 2 and 3 is nonzero.}
    \label{fig:intro_customer}
\end{figure}

In this section, we illustrate the geometry underlying Optimistic/Robust DF distance in a two-dimensional setting ($d=2$). Consider a polyhedral feasible region $S\subset\bbR^2$ with four extreme points, as shown in
Fig.~\ref{fig:intro_customer} (left). For each extreme point $w_k$ of $S$, the set of cost vectors for which $w_k$ is an optimizer of \eqref{eq:downstream_oracle} is a polyhedral cone (the normal cone of $S$ at $w_k$), which we denote by $\mathcal{C}_k:=\{c\in\bbR^2:\, w_k\in\arg\min_{w\in S} w^\top c\}$ and label as \textit{Cone $k$}. These cones form a partition of the cost space (up to boundaries where multiple extreme points are optimal), as depicted in Fig.~\ref{fig:intro_customer} (right). In particular, the oracle map is constant on each cone: if $x\in\mathcal{C}_k$, then $w^*(x)=w_k$.

This cone structure makes the SPO loss highly sensitive to whether $x$ and $y$ fall in the same cone. If $x$ and $y$
belong to the same cone $\mathcal{C}_k$, then $w^*(x)=w^*(y)$ and hence $\lspo(x,y)=0$. Consequently, the
decision-focused divergence \eqref{DWD} can be small (or even zero) when the coupling $\gamma$ mostly pairs outcomes
within the same cones, even if the paired points are far apart in Euclidean distance. Conversely, a small geometric
perturbation that moves probability mass across a cone boundary can change the optimizer of $w^*(\cdot)$, leading to a strictly positive $\lspo(x,y)$ despite $\|x-y\|$ being small.

The right panel of Fig.~\ref{fig:intro_customer} provides a concrete illustration with three point clouds (Groups
1--3). Each group can represent an empirical distribution over cost vectors for different customers or patients. For example, Groups~1, 2, and~3 represent patient cohorts at ages $40$, $45$, and $50$, respectively.  Groups 1 (purple) and 2 (green) lie in Cone 4, and thus induce the same oracle decision, while Group 3 (red) lies in Cone 1 and induces a different decision. As a
result, purely geometric distances may regard Groups 2 and 3 as close, yet the induced decisions can differ; in
contrast, Groups 1 and 2 may be geometrically far apart but decision-wise identical. In particular, if two measures
$\mu$ and $\nu$ are supported entirely within the same cone, then $w^*(X)=w^*(Y)$ almost surely for any coupling
$\gamma\in\Gamma(\mu,\nu)$, implying $\wspo(\mu,\nu;\gamma)=0$ and hence $\wdfo(\mu,\nu)=\wdfr(\mu,\nu)=0$. More
generally, when $\mu$ and $\nu$ place mass on multiple cones, \ref{WDFO} favors couplings that align mass
\emph{within} the same cones (to keep $w^*(X)=w^*(Y)$ as often as possible), whereas \ref{WDFR} corresponds to the
most adverse alignment that maximizes decision degradation. This perspective clarifies why coupling choice is central in decision-focused transport, and why the optimistic and robust quantities in decision-focused distance can differ substantially.

To conclude this section of examples, we note that $\wdfo$ can be zero under a broad class of couplings, even when $\mu$ and $\nu$ are geometrically far apart. This occurs whenever the coupling aligns $(X,Y)$ so that the downstream optimizer is preserved, i.e., $w^*(X)=w^*(Y)$ almost surely. A basic (and important) instance follows from the scale-invariance property $w^*(x)=w^*(c x)$ for any $c>0$, as shown in Proposition \ref{thm:rescale_wdfo_zero}.

\begin{proposition}[Random rescaling yields zero optimistic DF distance]\label{thm:rescale_wdfo_zero}
Let $X\in\bbR^d$ be any random cost vector with distribution $\mu$. Let $\kappa$ be any nonnegative random scalar
(which may depend on $X$), and define the transformed vector $ Y := \kappa X $,
with distribution $\nu$. Then we have $\wdfo(\mu,\nu)=0 $.
\end{proposition}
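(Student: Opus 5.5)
The plan is to exhibit one explicit coupling under which the SPO loss vanishes almost surely. Since $\lspo\ge 0$, this forces the infimum in \ref{WDFO} to be zero. The natural candidate is the joint law of the pair $(X,Y)=(X,\kappa X)$ itself.

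\textbf{Step 1 (the coupling).} Let $(X,\kappa)$ be defined on a common probability space; $\kappa$ is allowed to depend on $X$. Let $\gamma$ be the law of $(X,\kappa X)$, i.e., the pushforward of the joint law of $(X,\kappa)$ under the measurable map $(x,k)\mapsto(x,kx)$. By construction, its first marginal is $\mu$ and its second is the law $\nu$ of $Y$. Hence $\gamma\in\Gamma(\mu,\nu)$, and
\[
\wdfo(\mu,\nu)\le \wspo(\mu,\nu;\gamma)=\EE\big[\lspo(X,\kappa X)\big].
\]

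\textbf{Step 2 (pointwise vanishing of the loss).} Fix $x\in\bbR^d$ and $k\ge 0$, and show that $\lspo(x,kx)=0$. Write $\lspo(x,kx)=k\,x^\top w^*(x)-k\,x^\top w^*(kx)$. If $k=0$, both terms are $0$.

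If $k>0$, use optimal values rather than the identity of the selected optimizers. This sidesteps the only subtle point, namely that the oracle $w^*(\cdot)$ is merely \emph{some} selection from the argmin, so one cannot assume $w^*(kx)=w^*(x)$ when there are ties. Since $w^*(kx)$ minimizes $w\mapsto k\,x^\top w$ over $S$ and $k>0$,
\[
k\,x^\top w^*(kx)=\min_{w\in S} k\,x^\top w = k\min_{w\in S}x^\top w = k\,x^\top w^*(x).
\]
The minimum is attained because $S$ is a nonempty compact polyhedron. Therefore $\lspo(x,kx)=0$ for every $x$ and every $k\ge 0$.

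\textbf{Step 3 (conclusion).} By Step 2, $\lspo(X,\kappa X)=0$ identically, so $\wspo(\mu,\nu;\gamma)=0$. Combined with $\lspo\ge 0$, which gives $\wdfo(\mu,\nu)\ge 0$, this yields $\wdfo(\mu,\nu)=0$.

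The only step needing care is the tie-breaking issue in Step 2, and it is resolved by comparing objective values instead of decisions. Measurability of $\gamma$ is routine because it is the pushforward of a measurable map.
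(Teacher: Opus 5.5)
Your proof is correct and takes the same route as the paper: the coupling given by the law of $(X,\kappa X)$, pointwise vanishing of the SPO loss, and nonnegativity of $\lspo$. The one difference is the case $\kappa>0$. There the paper asserts $w^*(\kappa X)=w^*(X)$ by appealing to scale-invariance of the oracle's selection, whereas you compare optimal values, so your argument holds no matter how ties in the argmin are broken.
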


The proof of Proposition~\ref{thm:rescale_wdfo_zero}, along with other omitted proofs, is provided in the appendix.

\subsection{Entropy-regularized DF divergences}

The optimistic and robust DF divergences in Definition~\ref{def:df_dist} correspond to two extreme cases: the
best-case and the worst-case couplings, respectively. As a result, these estimates can be overly optimistic or
overly pessimistic in practice. Moreover, extreme couplings can be brittle: small changes in empirical measures may
lead to substantially different transport plans. To mitigate these issues and to define distance notions that lie
between the two extremes while still capturing meaningful dependence between $\mu$ and $\nu$, we introduce the
entropy-regularized DF divergence.

Entropy regularization is a standard tool in optimal transport for improving computational tractability and
stability. Even when $\mu$ and $\nu$ are replaced by empirical distributions, the optimal transport are large-scale problems over joint distributions (equivalently, transportation plans),
and the optimizer need not be unique. Entropy regularization (i) biases couplings toward a simple reference
distribution, (ii) smooths and stabilizes optimization over $\Gamma(\mu,\nu)$, and (iii) yields objectives that are
typically easier to compute in discrete settings. In our formulation, we regularize relative to the independent
coupling $\mu\otimes\nu$, so the regularization parameter $\varepsilon$ controls the strength of dependence imposed
between $\mu$ and $\nu$, ranging from highly structured dependence (small $\varepsilon$) to near-independence
(large $\varepsilon$).

\begin{definition}[Entropy-regularized DF divergences]\label{def:ent_df}
For $\varepsilon>0$, define the entropy-regularized optimistic and robust DF divergences by
\begin{equation}\tag{E-DFO}\label{EWDFO}
    \wdfeo(\mu,\nu;\varepsilon)
    :=\inf_{\gamma\in\Gamma(\mu,\nu)}
    \left\{\int \ell_{\mathrm{SPO}}(x,y)\,d\gamma(x,y)+\varepsilon\,\KL(\gamma\|\mu\otimes\nu)\right\}.
\end{equation}
\begin{equation}\tag{E-DFR}\label{EWDFR}
    \wdfer(\mu,\nu;\varepsilon)
    :=\sup_{\gamma\in\Gamma(\mu,\nu)}
    \left\{\int \ell_{\mathrm{SPO}}(x,y)\,d\gamma(x,y)
    -\varepsilon\,\KL(\gamma\|\mu\otimes\nu)\right\},
\end{equation}
where $\KL(\gamma\|\mu\otimes\nu)$ denotes the KL divergence of $\gamma$ relative to the independent coupling $\mu\otimes\nu$.
\end{definition}

Note that \eqref{EWDFO} is a minimization problem with a positive regularization term, whereas \eqref{EWDFR} is a
maximization problem with a negative regularization term. In both \eqref{EWDFO} and \eqref{EWDFR}, the additional
regularization term therefore acts as a penalty in the optimal transport objective. As $\varepsilon$ increases, the optimizer is increasingly encouraged to stay close to the independent coupling (i.e., $\gamma$ approaches
$\mu\otimes\nu$), while $\varepsilon\downarrow 0$ recovers the unregularized optimistic and robust formulations
(\ref{WDFO} and \ref{WDFR}).

 \section{Properties of the Decision-Focused Divergence}\label{sec:property}

We begin with a coarse stability bound that relates the (optimistic) DF distances to standard probabilistic metrics.
Throughout this section, we assume that $\mu$ and $\nu$ are supported on a bounded set $\mathcal{Y}\subset\bbR^d$.
We define the diameter of the feasible region in the decision space by $D_W:=\sup\{\|w-w'\|_2:\; w,w'\in S\}$,
and the diameter of $\mathcal{Y}$ by $\|\mathcal{Y}\|:=\sup\{\|x-y\|_2:\; x,y\in\mathcal{Y}\}$. 

\begin{proposition}[Optimistic DF distance is Lipschitz with respect to $W_1$ distance]\label{prop:lips_w1}
Assume that $\mu$ and $\nu$ are supported on $\mathcal{Y}$. Then
\[
\wdfo(\mu,\nu)\le W^O_{\mathrm{sym}}(\mu,\nu)\le D_W\,W_1(\mu,\nu)\le D_W\,\|\mathcal{Y}\|\,\tv(\mu,\nu).
\]
In particular, whenever $\KL(\mu\|\nu)<\infty$, Pinsker's inequality yields
\[
W^O_{\mathrm{sym}}(\mu,\nu)\le D_W\,\|\mathcal{Y}\|\sqrt{\tfrac12\,\KL(\mu\|\nu)}.
\]
\end{proposition}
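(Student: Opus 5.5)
The chain of inequalities will be proved link by link. The first inequality is immediate: $W^O_{\mathrm{sym}}(\mu,\nu)=\wdfo(\mu,\nu)+\wdfo(\nu,\mu)$, and $\wdfo(\nu,\mu)\ge 0$ because $\lspo\ge 0$ pointwise.

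For the second inequality, the plan is to use one common coupling for both terms. Fix any $\gamma\in\Gamma(\mu,\nu)$. Its transpose $\gamma^\top$ (the law of $(Y,X)$) lies in $\Gamma(\nu,\mu)$. Therefore
\[
\wdfo(\mu,\nu)+\wdfo(\nu,\mu)\le \int \big(\lspo(x,y)+\lspo(y,x)\big)\,\gamma(dx,dy).
\]
The key pointwise identity is
\[
\lspo(x,y)+\lspo(y,x)=y^\top w^*(x)-y^\top w^*(y)+x^\top w^*(y)-x^\top w^*(x)=(y-x)^\top\big(w^*(x)-w^*(y)\big).
\]
Both terms on the left-hand side are nonnegative. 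Cauchy--Schwarz then bounds this sum by $\|x-y\|_2\,\|w^*(x)-w^*(y)\|_2\le D_W\|x-y\|_2$, since $w^*(x),w^*(y)\in S$. Integrating gives $W^O_{\mathrm{sym}}\le D_W\int\|x-y\|_2\,d\gamma$. Taking the infimum over $\gamma$ gives $D_W W_1(\mu,\nu)$, with $W_1$ taken with respect to the Euclidean norm. This symmetrization trick is the main point of the proof. A single term $\lspo(x,y)$ alone is not obviously Lipschitz; only the sum telescopes into the inner product.

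For the third inequality, we bound the ground cost on the support: $\|x-y\|_2\le \|\mathcal{Y}\|\,\mathbf{1}\{x\neq y\}$ for $x,y\in\mathcal{Y}$. Hence $W_1(\mu,\nu)\le \|\mathcal{Y}\|\inf_{\gamma}\gamma(X\neq Y)$. We then invoke the classical maximal-coupling characterization $\inf_{\gamma\in\Gamma(\mu,\nu)}\gamma(X\ne Y)=\tv(\mu,\nu)$. The only technical care here is that the maximal coupling exists. Concretely, it puts mass $\min(p,q)\,d\lambda$ on the diagonal and couples the normalized positive parts $(p-q)_+$ and $(q-p)_+$ independently. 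This coupling stays supported on $\mathcal{Y}\times\mathcal{Y}$, so the diameter bound applies.

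Finally, when $\KL(\mu\|\nu)<\infty$, Pinsker's inequality $\tv(\mu,\nu)\le\sqrt{\tfrac12\KL(\mu\|\nu)}$ combined with the chain yields the last display. Measurability of $w^*$ (a piecewise-constant selection over a polyhedral partition, as noted after \eqref{eq:downstream_oracle}) ensures that all integrals are well defined.
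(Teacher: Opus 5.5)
Your proposal is correct and follows essentially the same route as the paper. Both arguments use the transposed coupling $\gamma^\top$ to bound both terms of $W^O_{\mathrm{sym}}$ with a single $\gamma$, then the identity $\lspo(x,y)+\lspo(y,x)=(y-x)^\top\big(w^*(x)-w^*(y)\big)$ with Cauchy--Schwarz and $\|w^*(x)-w^*(y)\|_2\le D_W$, and finally $W_1(\mu,\nu)\le\|\mathcal{Y}\|\,\tv(\mu,\nu)$ and Pinsker. The only addition is that you justify the $W_1$-to-$\tv$ step through the maximal coupling, which the paper cites as an elementary bound.
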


Proposition~\ref{prop:lips_w1} demonstrates that the optimistic DF distance is no larger than some constant times the traditional $W_1$ distance and $\tv$. Thus, a smaller geometric $W_1$ distance and $\tv$ also implies a smaller or equal $\wdfo$.
The bound in Proposition~\ref{prop:lips_w1} is generic: it does not exploit the piecewise-constant structure of the
oracle map $w^*(\cdot)$. We next introduce an oracle-induced (push-forward) measure that captures the intrinsic
geometry of the decision problem and yields sharper control.

\begin{definition}[Push-forward measure]\label{def:general_pushforward}
Let $\mu$ be a probability measure on a measurable space $(\Omega_1,\mathcal{F}_1)$, let $(\Omega_2,\mathcal{F}_2)$
be another measurable space, and let $f:\Omega_1\to\Omega_2$ be measurable. The push-forward of $\mu$ by $f$,
denoted $f_{\#}\mu$, is the measure on $(\Omega_2,\mathcal{F}_2)$ defined by
\[
(f_{\#}\mu)(B):=\mu\big(f^{-1}(B)\big),\qquad B\in\mathcal{F}_2.
\]
\end{definition}
In our setting, we take $f=w^*$ and $\Omega_1=\mathcal{Y}\subset\bbR^d$. Since $S$ is polyhedral, we may choose
$w^*(\cdot)$ to take values in the set of extreme points $\mathfrak{S}=\{w_1,\dots,w_{|\mathfrak{S}|}\}$. Writing
$D_i:=\{x\in\mathcal{Y}:\, w^*(x)=w_i\}$, the push-forward measure $w^*_{\#}\mu$ is the discrete distribution on
$\mathfrak{S}$ given by
\[
w^*_{\#}\mu(\{w_i\})=\mu(D_i),\qquad i=1,\dots,|\mathfrak{S}|.
\]
If $X\sim\mu$, then $w^*(X)$ is a random decision supported on $\mathfrak{S}$ and has distribution
$\mathcal{L}(w^*(X)):=w^*_{\#}\mu$.

The next proposition relates $\wdfo(\mu,\nu)$ to the quadratic Wasserstein distance between the oracle-induced
measures. Intuitively, $w^*_{\#}\mu$ and $w^*_{\#}\nu$ record how $\mu$ and $\nu$ distribute mass across oracle
regions (or, equivalently, across extreme-point decisions).

\begin{proposition}[Optimistic DF distance is Lipschitz with respect to $W_2$ distance]\label{prop:lips}
Let $V^2:=\int_{\bbR^d}\|y\|_2^2\,\nu(dy)<\infty$. Then
\[
\wdfo(\mu,\nu)\le V\,W_2\!\left(w^*_{\#}\mu,\;w^*_{\#}\nu\right).
\]
Moreover,
\[
\wdfo(\mu,\nu)
\le V\,D_W\sqrt{\tv\!\left(w^*_{\#}\mu,\;w^*_{\#}\nu\right)}
\le V\,D_W\Big(\tfrac12\,\KL\!\left(w^*_{\#}\mu\|w^*_{\#}\nu\right)\Big)^{1/4}.
\]
\end{proposition}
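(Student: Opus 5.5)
The plan is to lift an optimal coupling of the push-forward measures to a coupling of $\mu$ and $\nu$, then bound the SPO loss under that coupling with Cauchy--Schwarz.

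\textbf{Step 1 (lifting).} Let $\pi^\star$ be an optimal coupling of $w^*_{\#}\mu$ and $w^*_{\#}\nu$ for $W_2$. These measures are discrete on the finite set $\mathfrak{S}$, so $\pi^\star$ is a matrix $(\pi_{ij})$. Set $\mu_i:=\mu(\cdot\mid D_i)$ and $\nu_j:=\nu(\cdot\mid D_j)$ whenever the conditioning sets have positive mass, and define
\[
\gamma:=\sum_{i,j}\pi_{ij}\,\mu_i\otimes\nu_j .
\]
The row sums of $\pi^\star$ equal $\mu(D_i)$ and the column sums equal $\nu(D_j)$. Hence $\gamma\in\Gamma(\mu,\nu)$, and under $\gamma$ the pair $(w^*(X),w^*(Y))$ has law $\pi^\star$.

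\textbf{Step 2 (bounding the loss).} For any $x,y$,
\[
\lspo(x,y)=y^\top\big(w^*(x)-w^*(y)\big)\le \|y\|_2\,\|w^*(x)-w^*(y)\|_2 .
\]
Integrating against $\gamma$ and applying Cauchy--Schwarz gives
\[
\wdfo(\mu,\nu)\le \wspo(\mu,\nu;\gamma)\le \Big(\int\|y\|_2^2\,d\gamma\Big)^{1/2}\Big(\int\|w^*(x)-w^*(y)\|_2^2\,d\gamma\Big)^{1/2}.
\]
The first factor equals $V$, since the $y$-marginal of $\gamma$ is $\nu$. The second factor equals $W_2(w^*_{\#}\mu,w^*_{\#}\nu)$, since the law of $(w^*(X),w^*(Y))$ under $\gamma$ is $\pi^\star$. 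This proves the first inequality.

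\textbf{Step 3 (the $\tv$ and $\KL$ bounds).} For discrete measures there is a maximal coupling with $P(w^*(X)\neq w^*(Y))=\tv(w^*_{\#}\mu,w^*_{\#}\nu)$. On the event $w^*(X)\neq w^*(Y)$ the squared distance is at most $D_W^2$, and off it the distance is zero. Therefore
\[
W_2^2\big(w^*_{\#}\mu,w^*_{\#}\nu\big)\le D_W^2\,\tv\big(w^*_{\#}\mu,w^*_{\#}\nu\big).
\]
Taking square roots gives the second bound. Pinsker's inequality, $\tv\le\sqrt{\KL/2}$, then gives the final bound.

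The main subtlety is the measurability and well-definedness of the lifted coupling. Empty or null $D_i$ cause no trouble, because then $\pi_{ij}=0$ and the corresponding terms are dropped. The measurability of $w^*$ is already assumed by the paper's choice of a selection into $\mathfrak{S}$. The remaining steps are routine.
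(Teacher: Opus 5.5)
Your proposal is correct and follows the paper's proof essentially step for step. You apply Cauchy--Schwarz to $\lspo(x,y)=y^\top(w^*(x)-w^*(y))$, identify the second factor with $W_2(w^*_{\#}\mu,w^*_{\#}\nu)$ via the conditional-product lift $\sum_{i,j}\pi_{ij}\,\mu(\cdot\mid D_i)\otimes\nu(\cdot\mid D_j)$, and finish with the common-mass bound $W_2^2\le D_W^2\,\tv$ and Pinsker; the only cosmetic difference is that you lift an optimal $\pi^\star$ directly, whereas the paper first shows that the infimum of the second factor over $\Gamma(\mu,\nu)$ equals $W_2$.
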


Propositions \ref{prop:lips_w1} and \ref{prop:lips} control the optimistic DF distance in terms of the Wasserstein distance, since the Wasserstein distance is itself defined via an optimal transport plan.
For the robust DF distance, we can show that it is uniformly bounded, since the SPO loss satisfies
$\lspo(x,y)\le \|y\|_2\,\|w^*(x)-w^*(y)\|_2 \le D_W \|y\|_2$, which yields the bound
$\wdfr(\mu,\nu)\le D_W \int_{\bbR^d} \|y\|_2 \, \nu(dy)$.

We next provide basic order relations for the entropy-regularized quantities \eqref{EWDFO}--\eqref{EWDFR}. These
bounds formalize the idea that entropy regularization discourages extreme couplings and favors plans closer to the independent coupling $\mu\otimes\nu$.

\begin{proposition}[monotonicity and sandwich inequalities for entropy-regularized DF distance]\label{prop:erdf}
For fixed $\mu$ and $\nu$, the map $\varepsilon\mapsto \wdfeo(\mu,\nu;\varepsilon)$ is nondecreasing on
$\varepsilon\ge 0$, and the map $\varepsilon\mapsto \wdfer(\mu,\nu;\varepsilon)$ is nonincreasing on
$\varepsilon\ge 0$. Moreover, for any $\varepsilon\ge 0$,
\[
\wdfo(\mu,\nu)\le \wdfeo(\mu,\nu;\varepsilon)\le R(\mu,\nu),
\qquad
\wdfr(\mu,\nu)\ge \wdfer(\mu,\nu;\varepsilon)\ge R(\mu,\nu),
\]
where $R(\mu,\nu)$ is the independent regret defined in \eqref{eq:regret_def}.
\end{proposition}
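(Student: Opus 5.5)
The argument uses only the structure of the two variational problems. The objective is affine in the parameter $\varepsilon$, and the independent coupling $\mu\otimes\nu$ is always feasible. For each $\gamma\in\Gamma(\mu,\nu)$ we write $F(\gamma):=\int\lspo\,d\gamma$ and $K(\gamma):=\KL(\gamma\|\mu\otimes\nu)\in[0,\infty]$. Then $\wdfeo(\mu,\nu;\varepsilon)=\inf_\gamma\{F(\gamma)+\varepsilon K(\gamma)\}$ and $\wdfer(\mu,\nu;\varepsilon)=\sup_\gamma\{F(\gamma)-\varepsilon K(\gamma)\}$. At $\varepsilon=0$ we adopt the convention that these reduce to $\wdfo$ and $\wdfr$.

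\emph{Monotonicity.} Fix $0\le\varepsilon_1\le\varepsilon_2$. Since $K(\gamma)\ge 0$ (Gibbs' inequality), we have $F(\gamma)+\varepsilon_1K(\gamma)\le F(\gamma)+\varepsilon_2K(\gamma)$ for every $\gamma$, including when $K(\gamma)=+\infty$. Taking the infimum over $\Gamma(\mu,\nu)$ on both sides shows that $\wdfeo$ is nondecreasing in $\varepsilon$. In the same way, $F-\varepsilon_1K\ge F-\varepsilon_2K$ pointwise, and taking suprema shows that $\wdfer$ is nonincreasing. Both values are well defined because $0\le F(\gamma)\le D_W\int\|y\|_2\,\nu(dy)<\infty$, by the bound $\lspo(x,y)\le D_W\|y\|_2$ noted before the proposition and the bounded support assumption. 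So no expression of the form $\infty-\infty$ can arise.

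\emph{Sandwich bounds.} For the lower bound on the optimistic side, $K\ge0$ gives $F(\gamma)+\varepsilon K(\gamma)\ge F(\gamma)\ge\wdfo(\mu,\nu)$ for every $\gamma$, and taking the infimum yields $\wdfo\le\wdfeo$. This is also the monotonicity statement compared with $\varepsilon=0$. For the upper bound, we plug the feasible coupling $\gamma=\mu\otimes\nu$ into the infimum. Since $K(\mu\otimes\nu)=0$, we get $\wdfeo(\mu,\nu;\varepsilon)\le F(\mu\otimes\nu)=R(\mu,\nu)$ by \eqref{eq:regret_def}. The robust case mirrors this. We have $F-\varepsilon K\le F\le\wdfr$ for every $\gamma$, so $\wdfer\le\wdfr$. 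Evaluating the supremum at $\mu\otimes\nu$ gives $\wdfer(\mu,\nu;\varepsilon)\ge F(\mu\otimes\nu)-0=R(\mu,\nu)$.

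\emph{Main care point.} The only delicate step is measure-theoretic bookkeeping. We need that $\mu\otimes\nu\in\Gamma(\mu,\nu)$, that $\KL(\mu\otimes\nu\|\mu\otimes\nu)=0$, and that couplings with $K=\infty$ are handled correctly: they contribute $+\infty$ to the infimum and $-\infty$ to the supremum, so they never affect the inequalities. We also need that $\lspo$ is measurable and integrable, so that $F$ is finite. This requires choosing $w^*$ measurably, for instance piecewise constant on the polyhedral partition. Once these points are checked, every inequality above is a pointwise comparison followed by an inf or sup, so I do not expect any real obstacle.
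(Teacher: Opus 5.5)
Your proposal is correct and follows essentially the same route as the paper's proof. Monotonicity and the outer bounds $\wdfo\le\wdfeo$, $\wdfer\le\wdfr$ come from the pointwise comparison given by $\KL\ge 0$ followed by taking the inf or sup, and the bounds by $R(\mu,\nu)$ come from evaluating at the feasible independent coupling $\mu\otimes\nu$, where the KL term is zero. Your extra care about couplings with infinite KL, the $\varepsilon=0$ convention, and the finiteness of $\int\lspo\,d\gamma$ makes explicit what the paper leaves implicit.
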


Proposition \ref{prop:erdf} shows that the entropy-regularized DF distances are between the optimistic/robust DF distance and the independent regret. The value of $\epsilon$ further controls the degree of dependence. A larger $\epsilon$ drives the distance closer to the independent regret. The numerical results in Section \ref{sec:numerical} further justify these insights.

\section{Calculation of Decision-Focused Optimal Transport}\label{sec:calculation}

Computing $\wdfo(\mu,\nu)$ and $\wdfr(\mu,\nu)$ amounts to solving an optimal transport problem with transport cost
$\lspo(x,y)$. In particular, when $\mu$ and $\nu$ are replaced by empirical measures (i.e., $n_\mu$ samples from
$\mu$ and $n_\nu$ samples from $\nu$), the optimistic and robust DF distances reduce to large-scale linear programs.
The explicit LP formulation is provided in Appendix~\ref{append:OT_LP}.

However, the direct LP formulation has $n_\mu n_\nu$ decision variables and becomes prohibitive when sample sizes are moderate or large. Moreover, when one of the distributions admits a continuous density (or when $n_\mu n_\nu$ is simply too large), it is desirable to avoid solving LP directly. For the optimistic DF distance, the polyhedral structure of the downstream problem yields a substantial reduction. The key intuition is that $\mu$ enters the optimization only through an oracle-induced discrete measure, and the only coupling-dependent term can be expressed as a quadratic Wasserstein transport problem. This is formalized by Lemma~\ref{lemma:cons}, Theorem~\ref{thm:1}, and the lifting result in Theorem~\ref{thm:2}. For notational convenience, we define $\bar w^*(x):=-w^*(x)$.

\begin{lemma}[Coupling reduction via oracle push-forward]\label{lemma:cons}
Fix probability measures $\mu,\nu$ on $\bbR^d$, and let $w^*:\bbR^d\to S$ be the oracle map. For any bounded
measurable $h:\bbR^d\to\bbR^d$,
\[
\inf_{\gamma\in\Gamma(\mu,\nu)} \int h(y)^\top w^*(x)\,d\gamma(x,y)
=
\inf_{\bar{\gamma}\in\Gamma(w^*_{\#}\mu,\nu)} \int h(y)^\top w\,d\bar{\gamma}(w,y).
\]
\end{lemma}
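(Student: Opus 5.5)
We prove the identity by showing the two inequalities separately. The map $T:=(w^*,\mathrm{id}):(x,y)\mapsto(w^*(x),y)$ sends couplings of $(\mu,\nu)$ to couplings of $(w^*_{\#}\mu,\nu)$. We then construct a preimage for every coupling on the right by disintegration. Throughout, $w^*$ takes values in the finite set $\mathfrak{S}$ of extreme points, and $h$ is bounded. Hence the integrand $h(y)^\top w$ is bounded on $\mathfrak{S}\times\bbR^d$, and all integrals are finite.

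\textbf{Step 1 ($\ge$).} Fix $\gamma\in\Gamma(\mu,\nu)$ and set $\bar\gamma:=T_{\#}\gamma$. For bounded measurable $f$, the first marginal of $\bar\gamma$ satisfies $\int f(w)\,d\bar\gamma=\int f(w^*(x))\,d\gamma=\int f\,d(w^*_{\#}\mu)$. The second marginal is unchanged, so it equals $\nu$. Thus $\bar\gamma\in\Gamma(w^*_{\#}\mu,\nu)$. By the change of variables formula for push-forwards,
\[
\int h(y)^\top w\,d\bar\gamma(w,y)=\int h(y)^\top w^*(x)\,d\gamma(x,y).
\]
Taking the infimum over $\gamma$ shows that the left-hand side is at least the right-hand side.

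\textbf{Step 2 ($\le$).} Fix $\bar\gamma\in\Gamma(w^*_{\#}\mu,\nu)$; we must lift it to a coupling of $(\mu,\nu)$ with the same cost. Since $w^*_{\#}\mu$ is supported on the finite set $\mathfrak{S}$, no abstract disintegration theorem is needed. For each $w_i$ with $\mu(D_i)>0$, define the conditional laws $\mu_i:=\mu(\cdot\cap D_i)/\mu(D_i)$ and $\nu_i(B):=\bar\gamma(\{w_i\}\times B)/\mu(D_i)$. Then set
\[
\gamma:=\sum_{i:\mu(D_i)>0}\mu(D_i)\,\mu_i\otimes\nu_i.
\]
This says that, given $W=w_i$, we draw $X$ from $\mu$ restricted to the cell $D_i$, conditionally independent of $Y$.

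We check the properties of $\gamma$. Its first marginal is $\sum_i\mu(\cdot\cap D_i)=\mu$, because the cells $D_i$ partition $\mathcal{Y}$; cells with $\mu(D_i)=0$ carry no mass. Its second marginal is $\sum_i\bar\gamma(\{w_i\}\times\cdot)=\nu$, because $\bar\gamma(\{w_i\}\times\bbR^d)=\mu(D_i)$. Moreover, $w^*(x)=w_i$ on $D_i$, so
\[
\int h(y)^\top w^*(x)\,d\gamma=\sum_i \mu(D_i)\int h(y)^\top w_i\,d\nu_i(y)=\int h(y)^\top w\,d\bar\gamma.
\]
Taking the infimum over $\bar\gamma$ gives the reverse inequality.

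\textbf{Main obstacle.} The only delicate point is Step 2: the lifted coupling must have exact marginals and must preserve the cost. Both follow because $w^*$ is measurable and the oracle cells form a finite measurable partition. The measurability of $w^*$ holds for the chosen extreme-point-valued selection, since each $D_i$ is a polyhedral cell (possibly with boundary ties broken measurably). If one wanted a general measurable $w^*$ with non-finite range, the same argument would go through using disintegration of $\mu$ along $w^*$ on Polish spaces. Here, finiteness of $\mathfrak{S}$ makes this elementary.
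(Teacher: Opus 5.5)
Your proposal is correct and follows the paper's proof: the push-forward under $(x,y)\mapsto(w^*(x),y)$ gives the $\ge$ direction, and your lift $\sum_{i}\mu(D_i)\,\mu_i\otimes\nu_i$ is exactly the paper's measure $\gamma(A,B)=\sum_i\mu(A\cap D_i)\,\bar\gamma(\{w_i\}\times B)/\mu(D_i)$ in \eqref{gamma}, which gives $\le$. Writing the lift as a finite mixture of product measures is a small improvement, since it replaces the paper's ``extends by standard measure arguments'' with an explicit measure and makes the marginal checks immediate; one minor slip is that the cells $D_i$ should partition $\bbR^d$ rather than $\mathcal{Y}$, which changes nothing.
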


Lemma~\ref{lemma:cons} implies that the coupling-dependent term in the SPO loss depends on $\mu$ only through the
oracle-induced measure $w^*_{\#}\mu$.

\begin{theorem}[Quadratic OT reformulation of $\wdfo$]\label{thm:1}
Assume that $\mu$ and $\nu$ have finite second moments, and let $\bar w^*:=-w^*$. Then
\begin{equation}\label{DWDt}
    \wdfo(\mu,\nu)
    =\frac{1}{2}\Big[W_2^2\big((\bar w^*)_{\#}\mu,\nu\big)-\|w^*_{\#}\mu\|^2-\|\nu\|^2-2\int y^\top w^*(y)\,d\nu(y)\Big],
\end{equation}
where for any probability measure $\rho$ on $\bbR^d$ we write $\|\rho\|^2:=\int \|z\|_2^2\,d\rho(z)$. 
\end{theorem}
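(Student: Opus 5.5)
The plan is to split the SPO loss into a coupling-dependent part and a coupling-independent part. Then Lemma~\ref{lemma:cons} moves the coupling-dependent part onto the push-forward measure, and a polarization identity turns the resulting linear transport cost into a quadratic one.

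First, for any $\gamma\in\Gamma(\mu,\nu)$,
\[
\int \lspo\,d\gamma=\int y^\top w^*(x)\,d\gamma(x,y)-\int y^\top w^*(y)\,d\nu(y).
\]
The second term depends only on $\nu$ and is finite, because $|y^\top w^*(y)|\le \|y\|_2\sup_{w\in S}\|w\|_2$, $S$ is compact, and $\nu$ has a finite second (hence first) moment. Therefore
\[
\wdfo(\mu,\nu)=\inf_{\gamma\in\Gamma(\mu,\nu)}\int y^\top w^*(x)\,d\gamma-\int y^\top w^*(y)\,d\nu .
\]

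Next we apply Lemma~\ref{lemma:cons} with $h(y)=y$. The lemma is stated for bounded $h$, and $h(y)=y$ is unbounded. I would handle this either by noting that the lemma's proof only needs integrability of $h(y)^\top w$, which holds because $w$ ranges over the compact set $S$ and $\nu$ has a first moment, or by truncating $h_M(y)=y\,\one\{\|y\|\le M\}$ and letting $M\to\infty$. In the truncation route, the error is at most $\sup_S\|w\|\int_{\|y\|>M}\|y\|\,d\nu\to 0$ uniformly in the coupling, so the two infima converge. This gives
\[
\inf_{\bar\gamma\in\Gamma(w^*_\#\mu,\nu)}\int y^\top w\,d\bar\gamma .
\]
We then reparametrize by $u=-w$. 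The map $(w,y)\mapsto(-w,y)$ is a bijection between $\Gamma(w^*_\#\mu,\nu)$ and $\Gamma((\bar w^*)_\#\mu,\nu)$, and $y^\top w=-u^\top y$.

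For the polarization step, use $-u^\top y=\tfrac12\big(\|u-y\|^2-\|u\|^2-\|y\|^2\big)$. The terms $\int\|u\|^2$ and $\int\|y\|^2$ are fixed by the marginals and are finite. Moreover, $\|(\bar w^*)_\#\mu\|^2=\|w^*_\#\mu\|^2$ because negation preserves norms. Hence
\[
\inf\int y^\top w\,d\bar\gamma=\tfrac12\Big[W_2^2\big((\bar w^*)_\#\mu,\nu\big)-\|w^*_\#\mu\|^2-\|\nu\|^2\Big].
\]
Subtracting $\int y^\top w^*(y)\,d\nu=\tfrac12\cdot 2\int y^\top w^*(y)\,d\nu$ yields \eqref{DWDt}.

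The main obstacle is justifying Lemma~\ref{lemma:cons} for the unbounded $h(y)=y$, via the truncation or integrability argument above. All other steps are algebraic identities valid under the finite-second-moment assumption. That assumption also guarantees that every integral is finite, so subtracting the constants from the infimum is legitimate.
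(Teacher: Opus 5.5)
Your proposal is correct and follows essentially the same route as the paper: isolate the coupling-independent term $\int y^\top w^*(y)\,d\nu$, use Lemma~\ref{lemma:cons} to replace $\mu$ by its oracle push-forward, and convert the bilinear cost into $\tfrac12\|u-y\|_2^2$ via polarization. The differences are minor. You apply the lemma to $w^*$ with $h(y)=y$ and then negate, whereas the paper applies it directly to $\bar w^*$ with $h(y)=-y$. Your integrability/truncation argument for the unbounded $h$ also fills a point the paper passes over, since Lemma~\ref{lemma:cons} is stated only for bounded $h$.
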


\begin{proof}{\bfseries Proof of Theorem \ref{thm:1}}
Starting from the definition of $\wdfo$ and using $\bar w^*=-w^*$,
\[
\wdfo(\mu,\nu)
=\inf_{\gamma\in\Gamma(\mu,\nu)}\Big(-\int y^\top \bar w^*(x)\,d\gamma(x,y)\Big)-\int y^\top w^*(y)\,d\nu(y).
\]
Apply Lemma~\ref{lemma:cons} to the measurable map $\bar w^*$ (which is still piecewise constant) and to the choice
$h(y):=-y$. This gives
\[
\inf_{\gamma\in\Gamma(\mu,\nu)}\Big(-\int y^\top \bar w^*(x)\,d\gamma(x,y)\Big)
=
\inf_{\bar{\gamma}\in\Gamma((\bar w^*)_{\#}\mu,\nu)}\Big(-\int y^\top w\,d\bar{\gamma}(w,y)\Big).
\]
For any coupling $\bar{\gamma}$ on $\bbR^d\times\bbR^d$,
\[
\|w-y\|_2^2=\|w\|_2^2+\|y\|_2^2-2y^\top w
\quad\Longrightarrow\quad
-y^\top w=\tfrac12\Big(\|w-y\|_2^2-\|w\|_2^2-\|y\|_2^2\Big).
\]
Since the $\|w\|_2^2$ and $\|y\|_2^2$ terms depend only on the marginals, minimizing $-\int y^\top w\,d\bar{\gamma}$
over $\Gamma((\bar w^*)_{\#}\mu,\nu)$ is equivalent to minimizing $\int \|w-y\|_2^2\,d\bar{\gamma}(w,y)$, whose
optimum equals $W_2^2((\bar w^*)_{\#}\mu,\nu)$. Substituting back and using
$\|(\bar w^*)_{\#}\mu\|^2=\|w^*_{\#}\mu\|^2$ yields \eqref{DWDt}.\hfill\Halmos
\end{proof}

Theorem~\ref{thm:1} shows that the only coupling-dependent term in $\wdfo(\mu,\nu)$ is the quadratic transport cost
between $(\bar w^*)_{\#}\mu$ and $\nu$. Importantly, $(\bar w^*)_{\#}\mu$ is always discrete, supported on the (finite)
set $\{-w_i:\,w_i\in\mathfrak{S}\}$, even when $\mu$ has a continuous density. This reduces computing $\wdfo(\mu,\nu)$
to a semi-discrete quadratic optimal transport problem when $\nu$ is continuous, and to a much smaller discrete OT
problem when $\nu$ is empirical. After solving the reduced quadratic optimal transport problem, Theorem \ref{thm:2} further establishes a formulation to lift the coupling back to the original space $\Gamma(\mu,\nu)$.

\begin{theorem}[Lifting optimal couplings]\label{thm:2}
Let $\gamma^{*}\in\Gamma((\bar w^*)_{\#}\mu,\nu)$ be an optimal coupling for the quadratic cost
$c(w,y)=\|w-y\|_2^2$ between $(\bar w^*)_{\#}\mu$ and $\nu$. Define a measure $\gamma$ on measurable rectangles by
\begin{equation}\label{eq:thm2_explicit_lift}
\gamma(A,B)
:=\sum_{k=1}^{|\mathfrak{S}|}\gamma^{*}(\{-w_k\}\times B)\,
\frac{\mu(A\cap D_k)}{\mu(D_k)},
\qquad A,B\in\mathcal{B}(\bbR^d),
\end{equation}
(with the convention that the $k$th term is $0$ when $\mu(D_k)=0$). Equivalently,
\begin{equation}\label{eq:thm2_explicit_rect}
\gamma(D_k\times B)=\gamma^{*}(\{-w_k\}\times B),\qquad k=1,\ldots,|\mathfrak{S}|.
\end{equation}
Then $\gamma\in\Gamma(\mu,\nu)$ and $\wspo(\mu,\nu;\gamma)=\wdfo(\mu,\nu)$.
\end{theorem}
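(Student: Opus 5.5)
The plan has two parts. First we check that the lifted measure $\gamma$ is a coupling of $\mu$ and $\nu$. Then we show that its SPO cost equals the reduced optimum from Theorem~\ref{thm:1}.

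\textbf{Step 1: well-definedness and marginals.} For each fixed $k$ with $\mu(D_k)>0$, the map $(A,B)\mapsto \gamma^*(\{-w_k\}\times B)\,\mu(A\cap D_k)/\mu(D_k)$ is a product of two finite measures, one in $A$ and one in $B$. It therefore extends uniquely to a measure on $\mathcal{B}(\bbR^d)\otimes\mathcal{B}(\bbR^d)$. Summing finitely many such product measures gives a measure $\gamma$.

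\emph{First marginal.} Set $B=\bbR^d$. Because $\gamma^*$ has first marginal $(\bar w^*)_{\#}\mu$, we get $\gamma^*(\{-w_k\}\times\bbR^d)=\mu(D_k)$. Hence $\gamma(A\times\bbR^d)=\sum_k\mu(A\cap D_k)=\mu(A)$, since the $D_k$ partition the support. Here we fix $w^*$ as a measurable selection, so that the $D_k$ are disjoint.

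\emph{Second marginal.} Set $A=\bbR^d$. This gives $\gamma(\bbR^d\times B)=\sum_k\gamma^*(\{-w_k\}\times B)$. The sum equals $\gamma^*(\bbR^d\times B)=\nu(B)$ because $\gamma^*$ is concentrated on $\{-w_k\}_k\times\bbR^d$. There is one bookkeeping point: the index $k$ must also cover the terms with $\mu(D_k)=0$. For those $k$, $\gamma^*(\{-w_k\}\times B)\le \gamma^*(\{-w_k\}\times\bbR^d)=\mu(D_k)=0$, so dropping them loses nothing. The same computation with $A=D_k$ gives \eqref{eq:thm2_explicit_rect}.

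\textbf{Step 2: cost identity.} Write $\wspo(\mu,\nu;\gamma)=\int y^\top w^*(x)\,d\gamma-\int y^\top w^*(y)\,d\nu$. The second term depends only on $\nu$. For the first term, $w^*(x)=w_k$ on $D_k$, so
\[
\int y^\top w^*(x)\,d\gamma(x,y)=\sum_k\int_{D_k\times\bbR^d} y^\top w_k\,d\gamma(x,y)=\sum_k\int y^\top w_k\,\gamma^*(\{-w_k\}\times dy).
\]
The last equality holds because the $y$-marginal of $\gamma$ restricted to $D_k\times\cdot$ equals $\gamma^*(\{-w_k\}\times\cdot)$, by \eqref{eq:thm2_explicit_rect} extended from rectangles to the $y$-marginal measure. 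The right-hand side is $\int(-y)^\top w\,d\gamma^*(w,y)$.

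Now use the polarization identity from the proof of Theorem~\ref{thm:1}:
\[
\int(-y)^\top w\,d\gamma^*=\tfrac12\Big[W_2^2\big((\bar w^*)_{\#}\mu,\nu\big)-\|w^*_{\#}\mu\|^2-\|\nu\|^2\Big].
\]
This uses the optimality of $\gamma^*$ and the finite second moments, which make every term finite. Subtracting $\int y^\top w^*(y)\,d\nu$ reproduces exactly the right-hand side of \eqref{DWDt}, which equals $\wdfo(\mu,\nu)$ by Theorem~\ref{thm:1}.

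An alternative to quoting \eqref{DWDt} is to argue via Lemma~\ref{lemma:cons}. The reduced infimum over $\Gamma((\bar w^*)_{\#}\mu,\nu)$ of $-\int y^\top w\,d\bar\gamma$ is attained at $\gamma^*$, and we just showed that $\gamma$ achieves that same value in the original problem. Since $\gamma\in\Gamma(\mu,\nu)$ gives $\wspo(\mu,\nu;\gamma)\ge\wdfo(\mu,\nu)$ automatically, equality follows.

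\textbf{Main obstacle.} The delicate step is the measure-theoretic justification in Step 2. We need to pass from the rectangle identity \eqref{eq:thm2_explicit_rect} to integrating the unbounded function $y^\top w_k$ against $\gamma$ restricted to $D_k\times\bbR^d$. We handle this by noting that both sides define measures in $B$ that agree for every Borel $B$. Integrability of $y$ then follows from the finite second moment of $\nu$, via Cauchy--Schwarz with the boundedness of $S$.
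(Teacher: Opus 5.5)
Your proposal is correct and takes the same route as the paper. You check the two marginals of the explicit lift, then your computation $\int y^\top w^*(x)\,d\gamma=\int(-y)^\top w\,d\gamma^*$ (equivalently, $\gamma$ pushes forward to $\gamma^*$ under $(x,y)\mapsto(\bar w^*(x),y)$), combined with the polarization decomposition from Theorem~\ref{thm:1} and the optimality of $\gamma^*$, gives $\wspo(\mu,\nu;\gamma)=\wdfo(\mu,\nu)$; you also treat the indices with $\mu(D_k)=0$ explicitly, a detail the paper passes over.
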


In Theorem \ref{thm:2}, $\gamma^{*}(\{-w_k\}\times B)$ is the amount of mass that $\gamma^*$ assigns to pairs $(W,Y)$ with
$W=-w_k$ and $Y\in B$. The factor
$\mu(A\cap D_k)/\mu(D_k)$ is the conditional probability that $X\in A$ given that $X\in D_k$ under $\mu$
(with the convention that the corresponding term is $0$ when $\mu(D_k)=0$). Thus, \eqref{eq:thm2_explicit_lift}
constructs a coupling $\gamma\in\Gamma(\mu,\nu)$ in the following way: first sampling $(W,Y)\sim\gamma^*$, and whenever $W=-w_k$,
sampling $X$ from $\mu$ restricted to $D_k$. This ensures $\bar w^*(X)=W$ while preserving the $Y$-marginal.

\begin{remark}
At first glance, minimizing the quadratic cost $c(w,y)=\|w-y\|_2^2$ between $(\bar w^*)_{\#}\mu$ and $\nu$ may seem
counterintuitive. Here, $w$ represents a decision vector (e.g., product recommendations, medication dosages, or path
selections), whereas $y$ represents a cost vector (e.g., click-through rates, side effects, or travel times). These
two objects may carry different meanings, so subtracting them and minimizing a Euclidean distance can appear
unnatural. This reformulation is valid because the coupling-dependent term in $\wdfo(\mu,\nu)$ depends on $x$ only through $y^\top w^*(x)$. By Lemma~\ref{lemma:cons}, optimizing over $\Gamma(\mu,\nu)$ is equivalent (for that term) to optimizing over couplings between the oracle-induced measure and $\nu$, which leads to the quadratic
transport objective in Theorem~\ref{thm:1}.
\end{remark}

\subsection{Sample complexity via strong duality}\label{sec:sample_complexity}

In this section, we derive the sample complexity for estimating the DF distance between $\mu$ and $\nu$.
We consider the standard two-sample setting, where both  $\mu$ and $\nu$ are unknown. We observe independent samples
$X_1,\ldots,X_{n_\mu}\stackrel{\mathrm{i.i.d.}}{\sim}\mu$ and $Y_1,\ldots,Y_{n_\nu}\stackrel{\mathrm{i.i.d.}}{\sim}\nu$,
and form the empirical measures $\hat\mu_{n_\mu}$ and $\hat\nu_{n_\nu}$. The plug-in estimator of the optimistic DF distance is the empirical OT value $\wdfo(\hat\mu_{n_\mu},\hat\nu_{n_\nu})$, computed by solving the empirical linear program in Appendix~\ref{append:OT_LP}. Our goal is to quantify how many samples are required to ensure that the optimistic DF distance computed from the empirical measures is within a tolerance level $\epsilon$ of the true optimistic DF distance.

For classical Wasserstein distances $W_p$, it is well known that estimating $W_p(\mu,\nu)$ from empirical measures typically suffers from the curse of dimensionality: the convergence rate deteriorates as the ambient dimension $d$ grows (e.g., $\mathcal{O}(\sqrt{d}n^{-1/d})$ in \cite{ChewiNilesWeedRigollet2025StatOT}). In contrast, the optimistic DF distance inherits additional structure from the downstream problem. Because
the oracle map $w^*(\cdot)$ takes values in the finite set of extreme points of $S$, the dependence of $\wdfo$ on
$\mu$ is mediated through the finite partition $\{D_i\}$ (equivalently, through the discrete push-forward
$w^*_{\#}\mu$). The strong-duality representation below makes this explicit and leads to sample complexity bounds
that depend on $|\mathfrak{S}|$ rather than directly on $d$.

We first introduce the function space used in the dual. For a probability measure $\nu$ on $\bbR^d$, let
$L^1(\nu)$ denote the set of measurable functions $g:\bbR^d\to\bbR$ such that
$\int_{\bbR^d} |g(y)|\,\nu(dy)<\infty$.

\begin{proposition}[Strong duality for the optimistic DF distance]\label{prop:dual_wdfo}
Let $\mathfrak{S}=\{w_1,\ldots,w_{|\mathfrak{S}|}\}$ be the set of extreme points of $S$, and let
$D_i:=\{x\in\bbR^d:\,w^*(x)=w_i\}$ denote the oracle regions. Define
$z(y):=y^\top w^*(y)$ and, for each $i$, the reduced cost
$c_i(y):=y^\top w_i-z(y)$.
Let $\Phi_{\mathrm{SPO}}$ be the set of all $(f,g)\in\bbR^{|\mathfrak{S}|}\times L^1(\nu)$ satisfying
\begin{equation}\label{eq:PhiSPO}
    f_i+g(y)\le c_i(y),\qquad \forall i\in\{1,\ldots,|\mathfrak{S}|\},\ \forall y\in\bbR^d.
\end{equation}
Then
\begin{equation}\label{eq:WDFO_dual}
    \wdfo(\mu,\nu)=\sup_{(f,g)\in\Phi_{\mathrm{SPO}}}
    \left\{\sum_{i=1}^{|\mathfrak{S}|} f_i\,\mu(D_i)+\int_{\bbR^d} g(y)\,\nu(dy)\right\}.
\end{equation}
Moreover, for any fixed $f\in\bbR^{|\mathfrak{S}|}$, the optimal choice of $g$ in \eqref{eq:WDFO_dual} is
\begin{equation}\label{eq:g_star}
    g_f(y):=\min_{1\le i\le |\mathfrak{S}|}\big(c_i(y)-f_i\big),
\end{equation}
and hence \eqref{eq:WDFO_dual} can be written equivalently as
\begin{equation}\label{eq:WDFO_dual_reduced}
    \wdfo(\mu,\nu)=\sup_{f\in\bbR^{|\mathfrak{S}|}}
    \left\{\sum_{i=1}^{|\mathfrak{S}|} f_i\,\mu(D_i)+\int_{\bbR^d} g_f(y)\,\nu(dy)\right\}.
\end{equation}
\end{proposition}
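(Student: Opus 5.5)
Our plan is to reduce the problem to a semi-discrete optimal transport problem via Lemma~\ref{lemma:cons}, and then prove duality directly for that finite-source problem. Write $\lspo(x,y)=y^\top w^*(x)-z(y)=c_{i}(y)$ whenever $x\in D_i$. Then for any $\gamma\in\Gamma(\mu,\nu)$,
\[
\wspo(\mu,\nu;\gamma)=\sum_i\int_{D_i\times\bbR^d} c_i(y)\,d\gamma .
\]
The same computation as in Lemma~\ref{lemma:cons}, with $h(y)=y$ and the $-z(y)$ term depending only on $\nu$, shows that $\wdfo(\mu,\nu)$ equals
\[
\inf\Big\{\sum_i\int c_i\,d\pi_i:\ \pi_i\ge0,\ \pi_i(\bbR^d)=p_i:=\mu(D_i),\ \textstyle\sum_i\pi_i=\nu\Big\}.
\]
Here the sub-measures $\pi_i$ are obtained from $\gamma$ by restricting to $D_i\times\bbR^d$ and projecting onto $y$. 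Conversely, each such family lifts to a coupling of $\mu$ and $\nu$ by the construction of Theorem~\ref{thm:2}. We assume $\nu$ has a finite first moment, so that $|c_i(y)|\le D_W\|y\|$ is $\nu$-integrable and every quantity below is finite.

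\textbf{Weak duality.} For $(f,g)\in\Phi_{\mathrm{SPO}}$ and any feasible $(\pi_i)$,
\[
\sum_i\int c_i\,d\pi_i\ \ge\ \sum_i\int (f_i+g)\,d\pi_i=\sum_i f_ip_i+\int g\,d\nu .
\]
\textbf{Reduction to \eqref{eq:WDFO_dual_reduced}.} For fixed $f$, the constraint \eqref{eq:PhiSPO} is exactly $g\le g_f$ pointwise. Moreover, $g_f$ is measurable, and it lies in $L^1(\nu)$ because $|g_f(y)|\le D_W\|y\|+\max_i|f_i|$. The objective is increasing in $g$, so $g_f$ is the optimal choice. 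This yields \eqref{eq:g_star} and shows that the two sup formulations \eqref{eq:WDFO_dual} and \eqref{eq:WDFO_dual_reduced} coincide.

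\textbf{Strong duality (the main obstacle).} It suffices to show
\[
\sup_f \Psi(f)\ \ge\ \text{primal value},\qquad \Psi(f):=\sum_i f_ip_i+\int g_f\,d\nu .
\]
The function $\Psi$ is concave and finite on $\bbR^{|\mathfrak{S}|}$. It is invariant under $f\mapsto f+t\one$ (since $\sum_ip_i=1$), so we may restrict to $\sum_i f_i=0$.

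We first argue that the supremum is attained. If the region $\{y: i\in\arg\min_j(c_j(y)-f_j)\}$ carries $\nu$-mass strictly less than $p_i$, then raising $f_i$ slightly increases $\Psi$. Conversely, $\Psi(f)\to-\infty$ as some $f_i\to\infty$ relative to the others, because each $f_i$ with $p_i>0$ pays linearly while $g_f$ decreases at most linearly. Together with the normalization, this confines maximizing sequences to a compact set, so a maximizer $f^\star$ exists.

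Next we use the first-order optimality condition. The superdifferential of $\Psi$ at $f^\star$ is the set of vectors $p-q$, where $q_i$ is the $\nu$-mass assigned to index $i$ under some measurable tie-breaking among the minimizing indices. The tie sets make $\Psi$ nonsmooth, so one-sided derivatives give an interval for each $q_i$ rather than a single value. Since $0$ lies in the superdifferential, a standard splitting of the tie sets, in which the fractions of $\nu$ on each tie set are chosen to satisfy the interval constraints, produces measures $\pi_i\le\nu$ that are supported where index $i$ is minimizing, with $\pi_i(\bbR^d)=p_i$ and $\sum_i\pi_i=\nu$.

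Finally, on the support of $\pi_i$ we have $c_i=f^\star_i+g_{f^\star}$. Integrating gives
\[
\text{primal}\ \le\ \sum_i\int c_i\,d\pi_i=\Psi(f^\star),
\]
which closes the duality gap. As an alternative route to the same conclusion, one could invoke Kantorovich duality for the lower semicontinuous cost $\lspo$ on Polish spaces, and then observe that the potential on the $x$-side may be taken constant on each $D_i$.
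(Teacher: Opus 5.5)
Your proof agrees with the paper at both ends, but differs in the middle. The shared parts are the reduction via Lemma~\ref{lemma:cons} to a transport problem between the finitely supported $w^*_{\#}\mu$ and $\nu$ with cost $c_i(y)$, and the observation that \eqref{eq:PhiSPO} is equivalent to $g\le g_f$. For the middle step, the paper simply invokes Kantorovich duality for the semi-discrete problem. That is legitimate because, after the reduction, the source space is finite and $c_i(y)=y^\top w_i-z(y)$ is continuous (since $z$ is a minimum of finitely many linear functions), nonnegative, and at most $D_W\|y\|_2$. You instead prove semi-discrete duality by hand. You maximize the concave function $\Psi$ and read off from $0\in\partial\Psi(f^\star)$ a splitting of $\nu$ into measures $\pi_i$ concentrated on $\{y:\ i\in I(y)\}$, where $I(y):=\arg\min_j(c_j(y)-f^\star_j)$. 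You then close the gap by complementary slackness. This is longer but self-contained, and it yields more than the statement: dual attainment and an explicit optimal plan of Laguerre-cell type, which the construction of Theorem~\ref{thm:2} lifts back to $\Gamma(\mu,\nu)$.

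Three steps need repair.

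(i) Attainment. If some $p_i=0$, then $\Psi$ is nondecreasing as $f_i\to-\infty$, so the normalization $\sum_i f_i=0$ does not confine maximizing sequences. Under a mere first-moment assumption the supremum can then fail to be attained. Take $d=1$, $S=[-1,1]$, $w_1=-1$, $\mu=\delta_1$ (so $p=(1,0)$). Attaining the primal value $2\int y^-\,d\nu$ requires $f_1-f_2\ge 2|y|$ for $\nu$-a.e.\ $y<0$, which is impossible if $\nu$ has unbounded support on $(-\infty,0)$. The fix is to drop the null indices first: they carry no primal mass, and sending their $f_i\to-\infty$ recovers the reduced dual by monotone convergence. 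For $p>0$, coercivity is $\Psi(sv)/s\to\langle v,p\rangle-\max_i v_i<0$ for $v\notin\bbR\one$; this is the correct form of your ``pays linearly'' heuristic. Under the paper's bounded-support assumption you may instead clip to the compact set $\mathcal{F}$ from the proof of Theorem~\ref{ThmSampleComplexity}.

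(ii) The splitting. Your description of $\partial\Psi(f^\star)$ is right, but the justification you sketch is not. The coordinate intervals $\nu(I(y)=\{i\})\le p_i\le\nu(i\in I(y))$, obtained from derivatives along $\pm e_i$, do not imply that a splitting exists. Two tie cells with patterns $\{1,2\}$ and $\{3,4\}$, each of mass $\tfrac12$, together with $p=(\tfrac12,\tfrac12,0,0)$, satisfy all of them, yet no splitting exists. You need every direction: $\Psi'(f^\star;v)=\langle v,p\rangle-\int\max_{i\in I(y)}v_i\,d\nu(y)\le 0$ for all $v$. This says exactly that $p$ lies in the polytope $\sum_T\nu(E_T)\,\Delta(T)$, where $E_T:=\{y:\ I(y)=T\}$ and $\Delta(T)$ is the simplex on $T$. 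That membership is precisely the existence of a fractional splitting.

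(iii) Your alternative route fails as stated, because $\lspo$ is not lower semicontinuous. In the same one-dimensional example, $\lspo=2$ at $(0,-1)$ or $(0,1)$, depending on the choice of $w^*(0)$, while it vanishes at nearby points with $x$ on the other side of $0$. Kantorovich duality must therefore be applied after the reduction, as the paper does.
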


Proposition~\ref{prop:dual_wdfo} shows that the infinite-dimensional coupling optimization can be replaced by a
finite-dimensional maximization over $f\in\bbR^{|\mathfrak{S}|}$, together with an integral of the envelope
$g_f(\cdot)$. This structure is what enables dimension-free (in $d$) dependence on the $\mu$-sample size.

With duality representation, we now state a two-sample concentration result for $\wdfo(\hat\mu_{n_\mu},\hat\nu_{n_\nu})$.
Recall that $\mu$ and $\nu$ are supported
on a bounded set $\mathcal{Y}\subset\bbR^d$, and $S$ is compact with diameter $D_W$. Under these assumptions,
$\lspo$ is uniformly bounded by $B:= \|\mathcal{Y}\|D_W<\infty$.

\begin{theorem}[Two-sample concentration for $\wdfo$]\label{ThmSampleComplexity} For any $\delta\in(0,1)$, with probability at least $1-\delta$,
\begin{equation}\label{eq:sample_complexity_bound}
\Big|\wdfo(\hat{\mu}_{n_\mu},\hat{\nu}_{n_\nu})-\wdfo(\mu,\nu)\Big|
\le
B\sqrt{\frac{|\mathfrak{S}|}{n_\mu}}
+
C\,B\sqrt{\frac{|\mathfrak{S}|}{n_\nu}}
+
2B\sqrt{\frac{\log(4/\delta)}{2n_\mu}}
+
2B\sqrt{\frac{\log(4/\delta)}{2n_\nu}},
\end{equation}
where $C>0$ is a universal constant.
\end{theorem}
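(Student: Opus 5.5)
We will use the dual representation of Proposition~\ref{prop:dual_wdfo} and split the error into a $\mu$-sampling part and a $\nu$-sampling part. Write $p_i:=\mu(D_i)$, $\hat p_i:=\hat\mu_{n_\mu}(D_i)$, and
\[
\Psi(f;p,\nu):=\sum_i f_i p_i+\int g_f\,d\nu .
\]
Then $\wdfo(\mu,\nu)=\sup_f\Psi(f;p,\nu)$. Applying the same proposition to the empirical measures gives $\wdfo(\hat\mu_{n_\mu},\hat\nu_{n_\nu})=\sup_f\Psi(f;\hat p,\hat\nu_{n_\nu})$. We use the triangle decomposition
\[
\big|\wdfo(\hat\mu,\hat\nu)-\wdfo(\mu,\nu)\big|\le \big|\wdfo(\hat\mu,\hat\nu)-\wdfo(\mu,\hat\nu)\big|+\big|\wdfo(\mu,\hat\nu)-\wdfo(\mu,\nu)\big|,
\]
and bound each term by a supremum of empirical deviations over a restricted class of dual potentials.

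\textbf{Step 1: localize the dual.} Since $0\le c_i(y)\le B$ on $\mathcal Y$ and the objective is invariant under $f\mapsto f+t\mathbf 1$, $g\mapsto g-t$, we may normalize so that $\max_i f_i=0$. Standard $c$-concavity arguments (replace $f_i$ by $\inf_y(c_i(y)-g_f(y))$) then let us restrict to $f\in[-B,0]^{|\mathfrak S|}$. On this set $g_f$ takes values in $[-B,B]$, and $\sum_i f_ip_i$ is bounded by $B$. This restriction is uniform over all measures supported on $\mathcal Y$, so it also applies to the empirical problems.

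\textbf{Step 2: the $\mu$-term.} For fixed $\nu'$ (either $\nu$ or $\hat\nu$),
\[
|\sup_f\Psi(f;\hat p,\nu')-\sup_f\Psi(f;p,\nu')|\le\sup_{f\in[-B,0]^{|\mathfrak S|}}\Big|\sum_i f_i(\hat p_i-p_i)\Big|\le B\|\hat p-p\|_1 .
\]
The vector $n_\mu\hat p$ is multinomial with $|\mathfrak S|$ cells. By Cauchy--Schwarz, $\EE\|\hat p-p\|_1\le\sum_i\sqrt{p_i/n_\mu}\le\sqrt{|\mathfrak S|/n_\mu}$. As a function of the samples $X_1,\dots,X_{n_\mu}$, $\|\hat p-p\|_1$ has bounded differences $2/n_\mu$, so McDiarmid's inequality gives the deviation term $2B\sqrt{\log(4/\delta)/(2n_\mu)}$ with probability $1-\delta/2$ after splitting $\delta$. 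Alternatively, McDiarmid can be applied directly to $\wdfo$, whose bounded difference is $2B/n_\mu$ since changing one atom moves mass $1/n_\mu$ with a cost change at most $B$. This is where the dimension-free rate comes from: only the finite partition matters.

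\textbf{Step 3: the $\nu$-term, which is the main obstacle.} Here
\[
\big|\wdfo(\mu,\hat\nu)-\wdfo(\mu,\nu)\big|\le\sup_{f\in[-B,0]^{|\mathfrak S|}}\Big|\int g_f\,d(\hat\nu-\nu)\Big|,
\]
a supremum over $\mathcal G:=\{y\mapsto\min_i(c_i(y)-f_i)\}$, where $c_i(y)=y^\top w_i-\min_j y^\top w_j$ is fixed and only $f$ varies. We bound the expectation by symmetrization, $\EE\sup\le 2\,\mathfrak R_{n_\nu}(\mathcal G)$. To get a $\sqrt{|\mathfrak S|}$ rate independent of $d$, we view $\mathcal G$ as indexed by a $|\mathfrak S|$-dimensional parameter. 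The map $f\mapsto g_f$ is $1$-Lipschitz from $\ell_\infty$ to the sup-norm, so the covering number satisfies $N(\epsilon,\mathcal G,\|\cdot\|_\infty)\le(3B/\epsilon)^{|\mathfrak S|}$. Dudley's entropy integral then gives
\[
\mathfrak R_{n_\nu}(\mathcal G)\lesssim \int_0^{B}\sqrt{\frac{|\mathfrak S|\log(3B/\epsilon)}{n_\nu}}\,d\epsilon=C'B\sqrt{\frac{|\mathfrak S|}{n_\nu}} .
\]
This produces the universal constant $C$. The delicate points are ensuring that the localization in Step 1 holds uniformly, so that the sup is over a bounded class, and that the covering is taken in sup-norm, so that no dependence on $d$ enters. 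Finally, McDiarmid's inequality applied to $\sup_{\mathcal G}|\int g\,d(\hat\nu-\nu)|$, whose bounded differences are $2B/n_\nu$, adds $2B\sqrt{\log(4/\delta)/(2n_\nu)}$. A union bound over the two events yields \eqref{eq:sample_complexity_bound}.
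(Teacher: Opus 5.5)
Your proposal is correct and follows essentially the same route as the paper. Both arguments use the finite-dimensional dual, localize $f$ to a bounded box (the paper imposes $\max_i f_i=0$ and $\min_i f_i\ge -B$), bound the $\mu$-part by $B\|\hat p-p\|_1$ plus McDiarmid, and bound the $\nu$-part by symmetrization plus McDiarmid; the differences are minor: you pass through the intermediate value $\wdfo(\mu,\hat\nu_{n_\nu})$ rather than bounding $\sup_f|A(f)-B(f)|$ directly, and you supply the sup-norm covering and Dudley argument for the Rademacher term, which the paper only cites as standard.
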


The bound in Theorem \ref{ThmSampleComplexity} immediately implies an $\epsilon$-accuracy sample complexity, which also applies to the Robust DF case. Ignoring
logarithmic factors in $\delta$ and constants, the dominant terms scale as $B\sqrt{|\mathfrak{S}|/n_\mu}$ and $CB\sqrt{|\mathfrak{S}|/n_\nu}$. Compared with the typical convergence rate for estimating the classical $W_p$ distance in high dimensions, which scales on the order of $\mathcal{O}(\sqrt{d}\,n^{-1/d})$, the resulting rate for the DF distance is substantially faster and, notably, independent of the ambient dimension $d$. To guarantee $\big|\wdfo(\hat{\mu}_{n_\mu},\hat{\nu}_{n_\nu})-\wdfo(\mu,\nu)\big|\le \epsilon$ with high probability, it
suffices to take $n_\mu = \tilde{O}\!\left(\frac{B^2|\mathfrak{S}|}{\epsilon^2}\right)$, and $n_\nu = \tilde{O}\!\left(\frac{B^2|\mathfrak{S}|}{\epsilon^2}\right)$,
where $\tilde{O}(\cdot)$ suppresses logarithmic dependence on $1/\delta$.

\begin{proof}{\bfseries Proof of Theorem \ref{ThmSampleComplexity}}
We proceed in three steps.

\noindent\emph{Step 1: Dual representations for population and empirical objectives.}
By Proposition~\ref{prop:dual_wdfo}, we may write
\begin{equation}\label{eq:dual_pop}
\wdfo(\mu,\nu)
=
\sup_{f\in\bbR^{|\mathfrak{S}|}}
\left\{\sum_{i=1}^{|\mathfrak{S}|} f_i\,\mu(D_i)+\EE\big[g_f(Y)\big]\right\},
\end{equation}
where $Y\sim\nu$ and $g_f$ is given by \eqref{eq:g_star}. Likewise,
\begin{equation}\label{eq:dual_emp}
\wdfo(\hat\mu_{n_\mu},\hat\nu_{n_\nu})
=
\sup_{f\in\bbR^{|\mathfrak{S}|}}
\left\{\sum_{i=1}^{|\mathfrak{S}|} f_i\,\hat\mu_{n_\mu}(D_i)+\frac{1}{n_\nu}\sum_{j=1}^{n_\nu} g_f(Y_j)\right\}.
\end{equation}

Since the SPO loss is uniformly bounded by $B$, we have $0\le c_i(y)\le B$ for all $i$ and $y\in\mathcal{Y}$, hence
$0\le g_f(y)\le B$ whenever $\max_i f_i=0$. Moreover, the dual objective is invariant under the shift
$f\mapsto f+t\one$ (because $\sum_i \mu(D_i)=1$ and $\sum_i \hat\mu_{n_\mu}(D_i)=1$), so we may restrict the supremum
in \eqref{eq:dual_pop}--\eqref{eq:dual_emp} to the normalized set
\[
\mathcal{F}:=\Big\{f\in\bbR^{|\mathfrak{S}|}:\ \max_{1\le i\le|\mathfrak{S}|} f_i=0,\ \min_{1\le i\le|\mathfrak{S}|} f_i\ge -B\Big\},
\]
without changing either value. In particular, for $f\in\mathcal{F}$ we have $|f_i|\le B$ and $0\le g_f(y)\le B$.

\smallskip
\noindent\emph{Step 2: Decomposing the estimation error.}
For each $f\in\mathcal{F}$, define
\[
A(f):=\sum_{i=1}^{|\mathfrak{S}|} f_i\,\hat\mu_{n_\mu}(D_i)+\frac{1}{n_\nu}\sum_{j=1}^{n_\nu} g_f(Y_j),
\qquad
B(f):=\sum_{i=1}^{|\mathfrak{S}|} f_i\,\mu(D_i)+\EE[g_f(Y)].
\]
By the dual representations \eqref{eq:dual_emp}--\eqref{eq:dual_pop}, we have
\[
\wdfo(\hat\mu_{n_\mu},\hat\nu_{n_\nu})=\sup_{f\in\mathcal{F}} A(f),
\qquad
\wdfo(\mu,\nu)=\sup_{f\in\mathcal{F}} B(f).
\]

We first upper bound the difference $\wdfo(\hat\mu_{n_\mu},\hat\nu_{n_\nu})-\wdfo(\mu,\nu)$. Write
\[
\wdfo(\hat\mu_{n_\mu},\hat\nu_{n_\nu})-\wdfo(\mu,\nu)
=\sup_{f\in\mathcal{F}}A(f)-\sup_{f\in\mathcal{F}}B(f).
\]
Using the inequality $\sup_f A_f-\sup_f B_f\le \sup_f(A_f-B_f)$, we obtain
\begin{align*}
\wdfo(\hat\mu_{n_\mu},\hat\nu_{n_\nu})-\wdfo(\mu,\nu)
&\le \sup_{f\in\mathcal{F}}\big(A(f)-B(f)\big)\\
&=\sup_{f\in\mathcal{F}}
\left\{\sum_{i=1}^{|\mathfrak{S}|} f_i\big(\hat\mu_{n_\mu}(D_i)-\mu(D_i)\big)
+\left(\frac{1}{n_\nu}\sum_{j=1}^{n_\nu} g_f(Y_j)-\EE[g_f(Y)]\right)\right\}\\
&\le \sup_{f\in\mathcal{F}}
\left|\sum_{i=1}^{|\mathfrak{S}|} f_i\big(\hat\mu_{n_\mu}(D_i)-\mu(D_i)\big)\right|
+\sup_{f\in\mathcal{F}}
\left|\frac{1}{n_\nu}\sum_{j=1}^{n_\nu} g_f(Y_j)-\EE[g_f(Y)]\right|.
\end{align*}

We next upper bound the difference in the reverse direction. Similarly,
\[
\wdfo(\mu,\nu)-\wdfo(\hat\mu_{n_\mu},\hat\nu_{n_\nu})
=\sup_{f\in\mathcal{F}}B(f)-\sup_{f\in\mathcal{F}}A(f)
\le \sup_{f\in\mathcal{F}}\big(B(f)-A(f)\big).
\]
Repeating the same expansion and triangle inequality yields the same upper bound,
\[
\wdfo(\mu,\nu)-\wdfo(\hat\mu_{n_\mu},\hat\nu_{n_\nu})
\le \sup_{f\in\mathcal{F}}
\left|\sum_{i=1}^{|\mathfrak{S}|} f_i\big(\hat\mu_{n_\mu}(D_i)-\mu(D_i)\big)\right|
+\sup_{f\in\mathcal{F}}
\left|\frac{1}{n_\nu}\sum_{j=1}^{n_\nu} g_f(Y_j)-\EE[g_f(Y)]\right|.
\]

Combining the two one-sided bounds, we conclude that
\begin{equation}\label{eq:gap_decomp}
\Big|\wdfo(\hat\mu_{n_\mu},\hat\nu_{n_\nu})-\wdfo(\mu,\nu)\Big|
\le \Delta_\mu+\Delta_\nu,
\end{equation}
where
\[
\Delta_\mu:=\sup_{f\in\mathcal{F}}
\left|\sum_{i=1}^{|\mathfrak{S}|} f_i\big(\hat\mu_{n_\mu}(D_i)-\mu(D_i)\big)\right|,
\qquad
\Delta_\nu:=\sup_{f\in\mathcal{F}}
\left|\frac{1}{n_\nu}\sum_{j=1}^{n_\nu} g_f(Y_j)-\EE[g_f(Y)]\right|.
\]

We bound $\Delta_\mu$ and $\Delta_\nu$ separately.

\smallskip
\noindent\emph{Step 3: Concentration for $\Delta_\mu$ and $\Delta_\nu$.}
For $\Delta_\mu$, since $|f_i|\le B$ for $f\in\mathcal{F}$,
\[
\Delta_\mu \le B\sum_{i=1}^{|\mathfrak{S}|}\big|\hat\mu_{n_\mu}(D_i)-\mu(D_i)\big|.
\]
Let $p_i:=\mu(D_i)$ and $\hat p_i:=\hat\mu_{n_\mu}(D_i)$. Then
\[
\EE[\Delta_\mu]
\le
B\,\EE\Big[\|\hat p-p\|_1\Big]
\le
B\sqrt{|\mathfrak{S}|}\,\Big(\EE\|\hat p-p\|_2^2\Big)^{1/2}.
\]
Moreover, we can bound the second moment of $\hat p-p$ as follows. First,
\[
\EE\|\hat p-p\|_2^2
=\EE\Big[\sum_{i=1}^{|\mathfrak{S}|}(\hat p_i-p_i)^2\Big]
=\sum_{i=1}^{|\mathfrak{S}|}\EE\big[(\hat p_i-p_i)^2\big]
=\sum_{i=1}^{|\mathfrak{S}|}\Var(\hat p_i),
\]
where we use $\EE[\hat p_i]=p_i$ for each $i$. Next, since $\hat p_i=\frac{1}{n_\mu}\sum_{t=1}^{n_\mu}\one\{X_t\in D_i\}$,
we have
\[
\Var(\hat p_i)
=\Var\!\left(\frac{1}{n_\mu}\sum_{t=1}^{n_\mu}\one\{X_t\in D_i\}\right)
=\frac{1}{n_\mu}\Var\!\big(\one\{X_1\in D_i\}\big)
=\frac{1}{n_\mu}p_i(1-p_i).
\]
Therefore,
\[
\EE\|\hat p-p\|_2^2
=\sum_{i=1}^{|\mathfrak{S}|}\Var(\hat p_i)
=\frac{1}{n_\mu}\sum_{i=1}^{|\mathfrak{S}|}p_i(1-p_i)
\le \frac{1}{n_\mu}\sum_{i=1}^{|\mathfrak{S}|}p_i
=\frac{1}{n_\mu},
\]
and therefore
\begin{equation}\label{eq:Emu_bound}
\EE[\Delta_\mu]\le B\sqrt{\frac{|\mathfrak{S}|}{n_\mu}}.
\end{equation}
In addition, changing one sample $X_t$ alters each $\hat p_i$ by at most $1/n_\mu$, hence changes $\Delta_\mu$ by at
most $2B/n_\mu$. McDiarmid's inequality yields that with probability at least $1-\delta/2$,
\begin{equation}\label{eq:mu_tail}
\Delta_\mu \le \EE[\Delta_\mu] + 2B\sqrt{\frac{\log(4/\delta)}{2n_\mu}}.
\end{equation}

For $\Delta_\nu$, note that $0\le g_f(y)\le B$ for all $f\in\mathcal{F}$ and $y\in\mathcal{Y}$. A standard
symmetrization and Rademacher complexity bound for the class $\{g_f:\ f\in\mathcal{F}\}$ gives
\begin{equation}\label{eq:Enu_bound}
\EE[\Delta_\nu]\le C\,B\sqrt{\frac{|\mathfrak{S}|}{n_\nu}}
\end{equation}
for a universal constant $C>0$. Moreover, changing one sample $Y_t$ changes the empirical average
$\frac{1}{n_\nu}\sum_{j=1}^{n_\nu} g_f(Y_j)$ by at most $B/n_\nu$ uniformly over $f$, hence changes $\Delta_\nu$ by at
most $2B/n_\nu$. McDiarmid's inequality yields that with probability at least $1-\delta/2$,
\begin{equation}\label{eq:nu_tail}
\Delta_\nu \le \EE[\Delta_\nu] + 2B\sqrt{\frac{\log(4/\delta)}{2n_\nu}}.
\end{equation}

Finally, combining \eqref{eq:gap_decomp} with \eqref{eq:Emu_bound}--\eqref{eq:nu_tail} and applying a union bound
over the events in \eqref{eq:mu_tail} and \eqref{eq:nu_tail} yields \eqref{eq:sample_complexity_bound}.\hfill\Halmos
\end{proof}

\section{Applications: Decision-focused Interpolation}\label{sec:interpolant}

In this section, we illustrate an application of the DF divergence to distributional interpolation. Intuitively, in a decision-focused setting, given two measures $\mu$ and $\nu$, a meaningful ``average'' should not be the naive mixture $(\mu+\nu)/2$. Instead, it should account for how probability mass under $\mu$ is matched to probability mass
under $\nu$, and this matching depends on the downstream oracle map $w^*(\cdot)$ through the decision loss.

McCann's interpolation (\cite{mccann1997convexity}) provides a canonical way to construct a path of intermediate measures between two endpoints
via an optimal coupling. The basic mechanism is as follows: one first selects an optimal coupling between the endpoint measures and then pushes that coupling forward through the linear map
$\pi_t(x,y)=(1-t)x+ty$ for $t\in[0,1]$. In the classical quadratic OT setting, this recovers the McCann (displacement) interpolation and produces a constant-speed geodesic in $(\cP_2(\bbR^d),W_2)$. In our decision-focused setting, the cost $\lspo$ is generally non-quadratic (and may be discontinuous through
$w^*(\cdot)$), so a direct $W_2$-geodesic interpretation does not hold in general. Nevertheless, optimal couplings
for $\wdfo$ and $\wdfr$ still induce canonical families of intermediate measures through the same push-forward
construction. Section \ref{sec:mccann} provides the basic formulation of McCann interpolation.

\subsection{McCann interpolation from an optimal coupling}\label{sec:mccann}
Let $\mu_0,\mu_1\in\cP_2(\bbR^d)$, and let $\gamma\in\Gamma(\mu_0,\mu_1)$ be an optimal coupling for the quadratic
cost $c(x,y)=\|x-y\|_2^2$. For $t\in[0,1]$, define $\pi_t(x,y)=(1-t)x+ty$ and set $\mu_t:=(\pi_t)_{\#}\gamma$.
Equivalently, for any Borel set $A\subseteq\bbR^d$, $\mu_t(A)=\gamma(\{(x,y): (1-t)x+ty\in A\})$. This definition
does not require the existence of an optimal transport map; in general, the coupling $\gamma$ allows mass from
$\mu_0$ to split across multiple destinations in $\mu_1$. To motivate McCann interpolation, we consider the
following example.

\begin{example}[Mixture interpolation vs.\ McCann interpolation]\label{example:mc}
Let $\mu_0=\mathcal N(0,1)$ and $\mu_1=\mathcal N(4,2)$ on $\bbR$. A naive interpolation linearly averages the
densities, $p_t=(1-t)p_{\mu_0}+t p_{\mu_1}$, which yields a two-component Gaussian mixture and can be bimodal for
intermediate $t$. In contrast, the McCann interpolation in $(\cP_2(\bbR),W_2)$ transports mass along an optimal
coupling and remains Gaussian for all $t\in[0,1]$, with
$\mu_t=\mathcal N\!\big(4t,\;((1-t)+t\sqrt{2})^2\big)$.

\begin{figure}[t]
    \centering
    \begin{subfigure}[t]{0.48\textwidth}
        \centering
        \includegraphics[width=\textwidth]{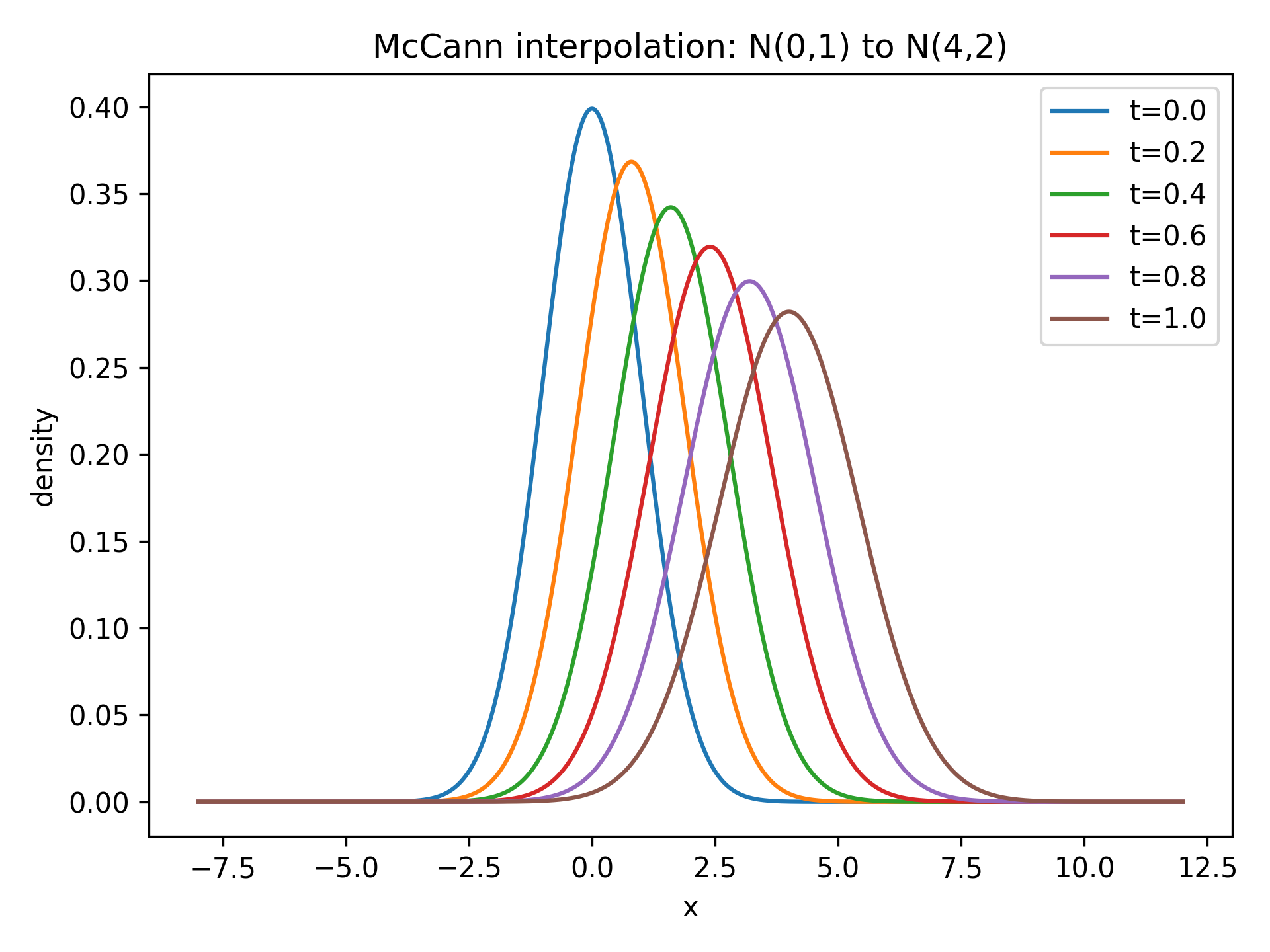}
        \caption{McCann (displacement) interpolation between $\mathcal N(0,1)$ and $\mathcal N(4,2)$.}
        \label{fig:mccann_example}
    \end{subfigure}
    \hfill
    \begin{subfigure}[t]{0.48\textwidth}
        \centering
        \includegraphics[width=\textwidth]{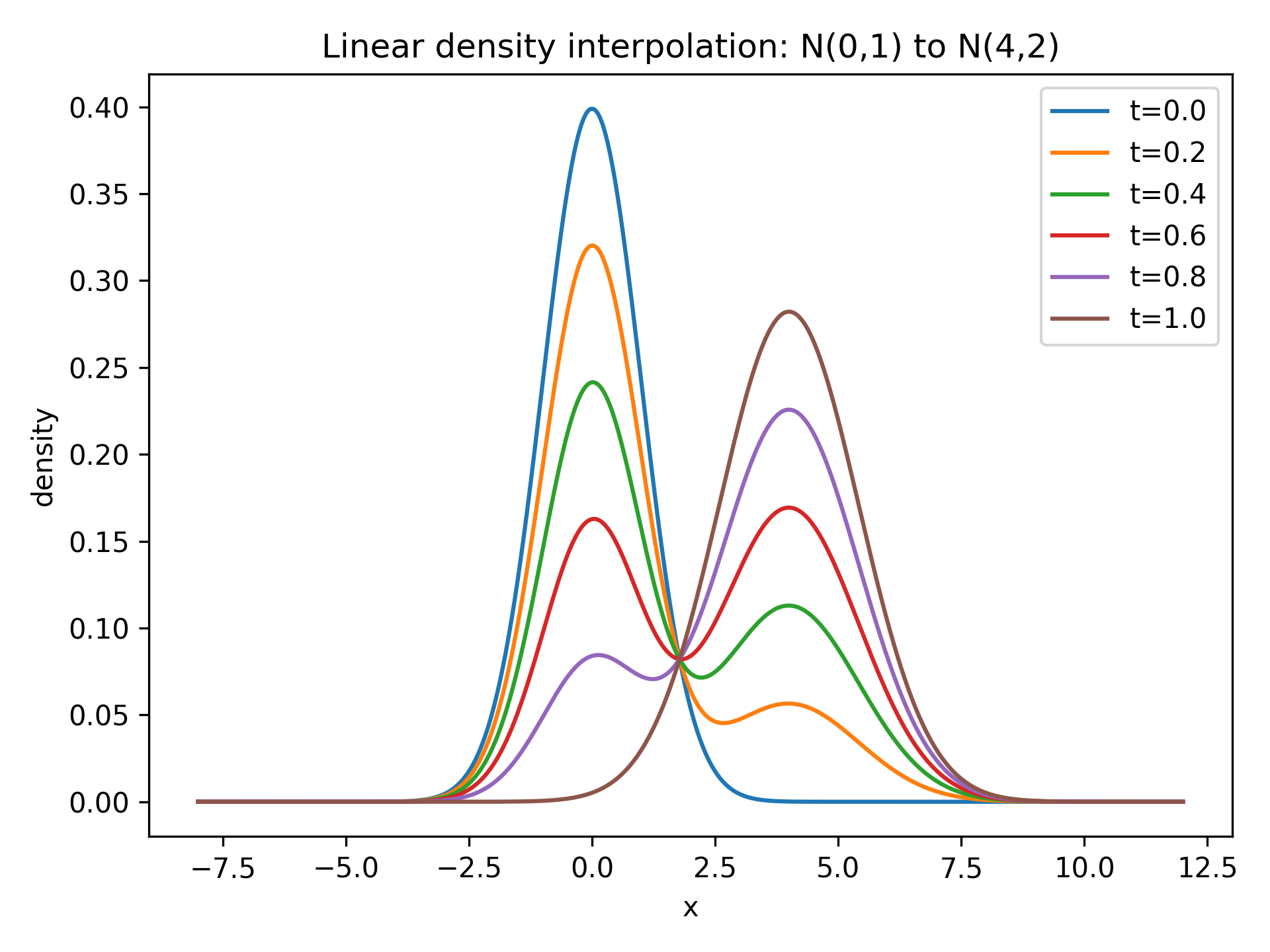}
        \caption{Linear density averaging $p_t=(1-t)p_{\mathcal N(0,1)}+t\,p_{\mathcal N(4,2)}$.}
        \label{fig:mixture_example}
    \end{subfigure}
    \caption{Comparison of McCann interpolation and linear density averaging for Gaussian measures. \textbf{Left:} McCann
    interpolation captures a smooth displacement from $\mu_0$ to $\mu_1$. \textbf{Right:} linear density averaging can be
    bimodal for intermediate $t$ and does not reflect a plausible transport of mass.}
    \label{fig:gaussian_interp_comparison}
\end{figure}
Figure~\ref{fig:gaussian_interp_comparison} compares the McCann interpolation with the naive density mixture between
$\mu_0$ and $\mu_1$ as $t$ varies from $0$ to $1$. The McCann interpolant shown in the left panel preserves the
Gaussian shape throughout the transition from $\mu_0$ to $\mu_1$, whereas the naive density mixture in the right
panel becomes bimodal at intermediate values of $t$. In practice, when the distributional shape is preserved along
the dynamics, McCann interpolation provides a more realistic estimate of the intermediate path by considering an optimal coupling.

\end{example}


\subsection{Decision-focused interpolations}\label{sec:df_interpolations}

We now define decision-focused interpolants by pushing forward optimal couplings of the DF objectives through the
same linear map $\pi_t(x,y)=(1-t)x+ty$. Unlike the quadratic setting, these interpolants are not geodesics for a
Riemannian-type metric in the data space; rather, they provide a coupling-driven path from $\mu_0$ to $\mu_1$ that
is tailored to decision loss.

\begin{definition}[Decision-focused interpolant induced by a coupling]\label{def:df_interpolants}
Let $\mu_0,\mu_1\in\cP_2(\bbR^d)$, let $\gamma\in\Gamma(\mu_0,\mu_1)$ be any coupling, and for $t\in[0,1]$ define
$\pi_t(x,y)=(1-t)x+ty$. The \emph{decision-focused interpolant} induced by $\gamma$ at time $t$ is
\[
\mu_t^{\gamma}:=(\pi_t)_{\#}\gamma,\qquad t\in[0,1].
\]
\end{definition}

In Definition~\ref{def:df_interpolants}, varying $t$ from $0$ to $1$ yields a path of intermediate distributions that
transforms $\mu_0$ into $\mu_1$ under the coupling $\gamma$. In a decision-focused setting, the coupling $\gamma$ is chosen to reflect the desired decision-focused objective. For instance, one may take $\gamma$ to be an optimizer of the \ref{WDFO}, an optimizer of \ref{WDFR}, or an optimizer of the entropy-regularized formulations \eqref{EWDFO}--\eqref{EWDFR}.
Each choice yields a different interpolation path $\{\mu_t^{\gamma}\}_{t\in[0,1]}$, reflecting different coupling structures between $\mu_0$ and $\mu_1$.

\begin{remark}
The interpolant in Definition~\ref{def:df_interpolants} is constructed by pushing forward a coupling
$\gamma\in\Gamma(\mu_0,\mu_1)$. Recall that Section~\ref{sec:calculation} provides a $W_2$ reduction of the
optimistic DF distance in terms of couplings in the reduced space $\Gamma(w^*_{\#}\mu_0,\mu_1)$. Accordingly, one may
alternatively construct an interpolation in the reduced space $\Gamma(w^*_{\#}\mu_0,\mu_1)$ and then lift it back to
a coupling in $\Gamma(\mu_0,\mu_1)$. This procedure generally produces a different interpolation path from that in
Definition~\ref{def:df_interpolants}. We compare the two approaches in Appendix~\ref{sec:mccann_df_reduction}. In
practice, we recommend using Definition~\ref{def:df_interpolants}, as the resulting path directly describes the
evolution from $\mu_0$ to $\mu_1$ in the original data space and is easier to interpret than a path constructed
between $w^*_{\#}\mu_0$ and $\mu_1$ in the reduced space.
\end{remark}


\begin{theorem}[A one-sided bound along the optimistic DF interpolant]\label{thm:dfo_mccann_one_sided}
Let $\mu,\nu$ be probability measures on $\bbR^d$ with finite second moments. Let $\gamma^*\in\Gamma(\mu,\nu)$ be an
optimizer for $\wdfo(\mu,\nu)$, and for $t\in[0,1]$ define $\pi_t(u,v)=(1-t)u+tv$ and
$\nu_t:=(\pi_t)_{\#}\gamma^*$. Then, for all $t\in[0,1]$, we have $\wdfo(\mu,\nu_t)\le t\,\wdfo(\mu,\nu)$.
\end{theorem}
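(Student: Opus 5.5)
The plan is to take the pointwise ``straight-line'' coupling induced by $\gamma^*$ between $\mu$ and $\nu_t$, use it as a feasible coupling in the infimum defining $\wdfo(\mu,\nu_t)$, and bound the SPO loss along the segment pointwise by $t\,\lspo(x,y)$. The argument rests on the concavity of the optimal value function $z\mapsto z^\top w^*(z)=\min_{w\in S} w^\top z$.

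\emph{Step 1 (coupling).} Let $(X,Y)\sim\gamma^*$ and set $Z_t:=(1-t)X+tY$. Then $Z_t\sim\nu_t$ by definition of $\nu_t=(\pi_t)_{\#}\gamma^*$. Hence the law $\tilde\gamma_t$ of $(X,Z_t)$, namely $(u,v)\mapsto(u,\pi_t(u,v))$ pushed forward through $\gamma^*$, belongs to $\Gamma(\mu,\nu_t)$. Therefore
\[
\wdfo(\mu,\nu_t)\le \wspo(\mu,\nu_t;\tilde\gamma_t)=\int \lspo\big(x,(1-t)x+ty\big)\,d\gamma^*(x,y).
\]
Integrability is not an issue: $S$ is compact and the measures have finite second (hence first) moments, so $|\lspo(x,z)|\le D_W\|z\|_2$ is integrable.

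\emph{Step 2 (pointwise bound).} Write $z:=(1-t)x+ty$. The map $\phi(z):=z^\top w^*(z)=\min_{w\in S}w^\top z$ is a minimum of linear functions, hence concave. This gives $\phi(z)\ge(1-t)\,x^\top w^*(x)+t\,y^\top w^*(y)$. By linearity, $z^\top w^*(x)=(1-t)\,x^\top w^*(x)+t\,y^\top w^*(x)$. Subtracting,
\[
\lspo(x,z)=z^\top w^*(x)-\phi(z)\le t\big(y^\top w^*(x)-y^\top w^*(y)\big)=t\,\lspo(x,y).
\]
The $(1-t)\,x^\top w^*(x)$ terms cancel exactly. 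This cancellation is the key point: at the left endpoint, the decision $w^*(x)$ is already optimal for $x$, so the entire loss comes from the $y$-component.

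\emph{Step 3 (integrate).} Integrating the pointwise bound against $\gamma^*$ and using the optimality of $\gamma^*$ gives
\[
\wdfo(\mu,\nu_t)\le t\int\lspo\,d\gamma^*=t\,\wdfo(\mu,\nu).
\]
The endpoint cases are consistent: at $t=0$ the coupling is the diagonal and yields $0$, and at $t=1$ it is $\gamma^*$ itself.

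The only delicate point is Step 2. There one must take care that $w^*(z)$ is an arbitrary selection from the argmin; this is harmless because $\phi(z)$ equals the minimum value regardless of which minimizer $w^*$ selects. The existence of the optimizer $\gamma^*$ is assumed in the statement, so no compactness argument is needed.
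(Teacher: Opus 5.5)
Your proof is correct and follows essentially the same route as the paper. Like the paper, you push $\gamma^*$ forward through $(\mathrm{Id},\pi_t)$ to get a feasible coupling in $\Gamma(\mu,\nu_t)$, use concavity of $z(x)=\min_{w\in S}w^\top x$ to obtain the pointwise bound $\lspo(x,(1-t)x+ty)\le t\,\lspo(x,y)$, and integrate against the optimal $\gamma^*$; your integrability and argmin-selection remarks are minor additions that the paper leaves implicit.
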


Theorem~\ref{thm:dfo_mccann_one_sided} provides a simple ``constant-speed'' upper bound along the optimistic DF
interpolant: as $t$ increases from $0$ to $1$, the decision-focused distance from $\mu$ to the intermediate marginal
$\nu_t$ grows at most linearly, i.e., $\wdfo(\mu,\nu_t)\le t\,\wdfo(\mu,\nu)$. This mirrors the exact constant-speed
property of the McCann geodesic for $W_2$, but only in a one-sided form (for the $W_2$ McCann interpolation, the
speed is exactly constant; see Appendix~\ref{sec:mccann_df_reduction}). As shown in the proof, this inequality $\wdfo(\mu,\nu_t)\le t\,\wdfo(\mu,\nu)$ stems from the concavity of the value function $z(x)=\min_{w\in S}w^\top x$. Practically, the result guarantees that the optimistic DF interpolant yields a family of distributions whose decision-focused discrepancy from $\mu$ is controlled uniformly in $t$, which is useful when generating intermediate distributions (e.g., for data augmentation or gradual shift modeling) while maintaining bounded decision degradation. As a special case, if $\gamma$ is an optimal coupling for $\wdfo(\mu_0,\mu_1)$ and $t=\tfrac12$, we refer to the
resulting measure $\mu^\gamma_{1/2}$ as the \emph{decision-focused average} of $\mu_0$ and $\mu_1$.

\section{Numerical Study}\label{sec:numerical}

This section evaluates the proposed decision-focused divergences and the coupling-induced interpolations on both
synthetic and real data. We first present an illustrative example based on the single-period newsvendor problem, which compares the values of different distance metrics for different pairs of distributions. We then study a real longitudinal healthcare dataset to quantify
distributional change over time, construct intermediate-time distributions via coupling-induced interpolation, and
evaluate downstream decision quality through the SPO loss.

\subsection{Toy example for newsvendor problem}\label{sec:illustrative_examples}

We use the single-period newsvendor problem as a simple downstream decision task to illustrate the behavior of the
decision-focused divergences relative to standard distributional distances. Although the classical newsvendor is often written as a scalar minimization over an order quantity, it can be embedded into our linear optimization framework by allowing randomized ordering decisions over a finite set of feasible order quantities.

Specifically, demand takes values in a finite set $\{d_i:i\in I\}$, where $I=\{1,\dots,m\}$, with
$\mathbb{P}(D=d_i)=p_i$ and $\sum_{i\in I}p_i=1$. The decision-maker chooses an order quantity from a discrete set
$\{q_j:j\in J\}$, where $J=\{1,\dots,n\}$. If demand realizes as $d_i$ and the order quantity is $q_j$, the incurred
cost is $C_{ij}$, where the matrix $(C_{ij})_{i\in I,j\in J}$ is assumed known (and can encode purchasing, holding,
shortage, or other operational penalties). The decision vector $x = (x_j)_{j\in J}$ denote the probability of choosing each order quantity $q_j$. Thus, $x_j\in[0,1]$ and $x$ lies in the simplex. The expected cost is linear in $x$, and the newsvendor problem can be written as the linear program
\begin{equation}\label{eq:NewsvendorLP}
\begin{aligned}
    \min_{x} \quad & \sum_{i\in I}\sum_{j\in J} p_i\, C_{ij}\, x_j \\
    \text{s.t.} \quad
    & \sum_{j\in J} x_j = 1, \\
    & x_j \ge 0, \quad \forall j\in J .
\end{aligned}
\end{equation}
The extreme points of \eqref{eq:NewsvendorLP} correspond to deterministic ordering decisions.

We now specialize to the standard underage--overage structure. Let $b>0$ and $h>0$ denote the unit underage
(shortage) and overage (holding) costs, respectively, and define the critical fractile
$\alpha:=\frac{b}{b+h}$. In the example below, we set $\alpha=0.6$ by choosing $b=3$ and $h=2$. For customer type
$k\in\{1,2,3\}$ with demand distribution $\mu_k$ and CDF $F^{(k)}$, the optimal order quantity refers to the $60\%$ quantile value.

\begin{figure}[ht]
    \centering
    \includegraphics[width=0.7\linewidth]{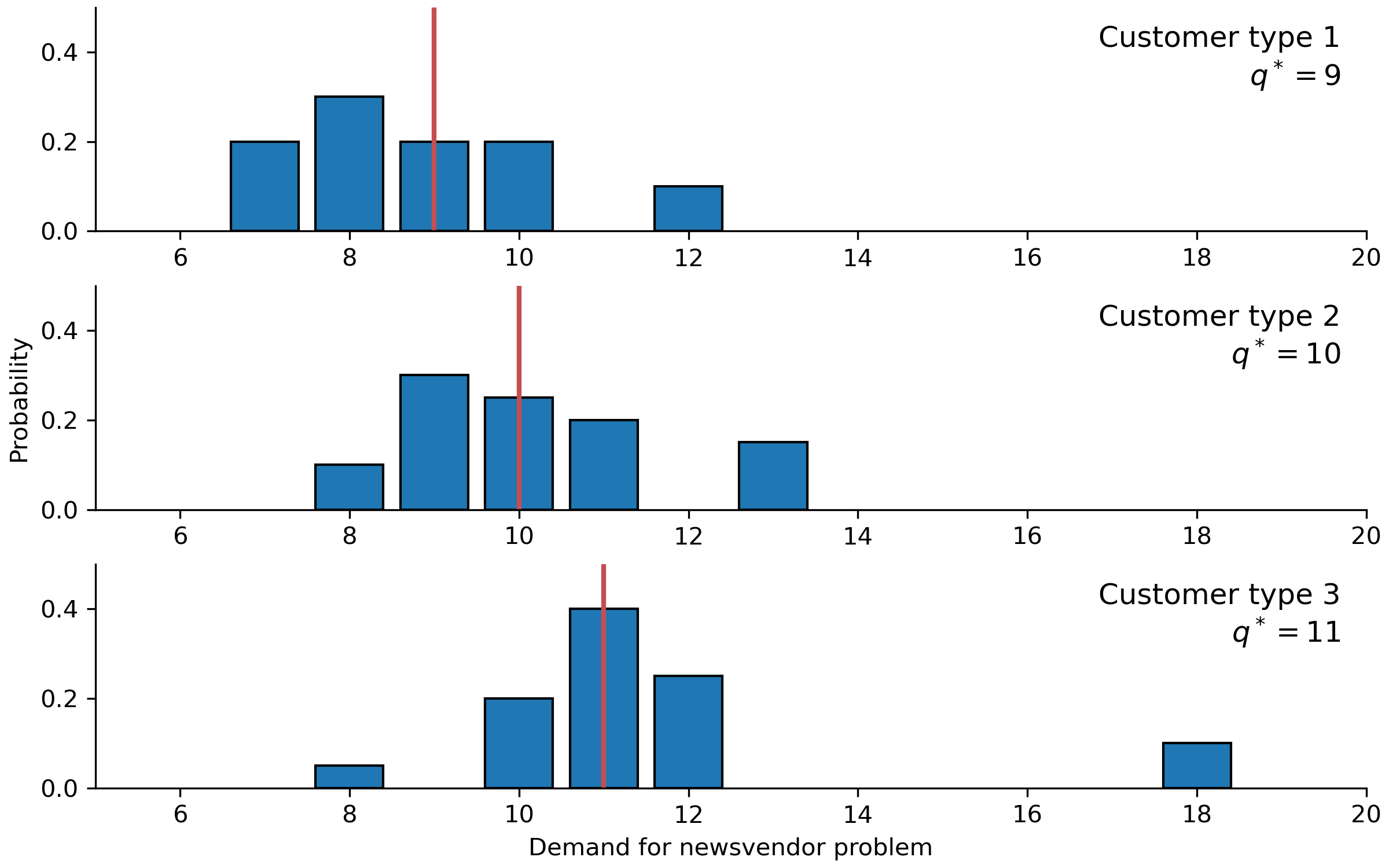}
    \caption{Three customer types have distinct discrete demand distributions and, under $\alpha=0.6$, induce
    distinct optimal order quantities $q^{*(1)}=9$, $q^{*(2)}=10$, and $q^{*(3)}=11$ (vertical red lines).}
    \label{fig:newsvendor}
\end{figure}

Figure~\ref{fig:newsvendor} shows three discrete demand distributions $\mu_1,\mu_2,\mu_3$ with different shapes and
different $\alpha$-quantiles. In particular, for $\alpha=0.6$ we obtain
$q^{*(1)}=9$, $q^{*(2)}=10$, and $q^{*(3)}=11$.
Thus, the customer type affects the downstream optimal decision.

To connect with our decision-focused OT formulation, we treat the population as heterogeneous: a random arriving
customer belongs to type $k\in\{1,2,3\}$ with probability $\lambda_k$. 

We further consider population mixtures of the three customer types. Let
$\lambda=(\lambda_1,\lambda_2,\lambda_3)\in\Delta_3$ denote mixture weights and define
$\mu_\lambda:=\sum_{k=1}^3 \lambda_k \delta_{c^{(k)}}$. We take the uniform mixture
$\lambda_0=(1/3,1/3,1/3)$ as a baseline population. Table~\ref{tab:mixture_distance_to_baseline} reports several
distance measures between $\mu_\lambda$ and $\mu_{\lambda_0}$ for different mixture vectors $\lambda$, including decision-focused distances $\wdfo(\mu_\lambda,\mu_{\lambda_0})$ and $\wdfr(\mu_\lambda,\mu_{\lambda_0})$. 
In Table \ref{tab:mixture_distance_to_baseline}, the column $R(\mu_\lambda,\mu_{\lambda_0})$ reports the decision-focused risk under the independent coupling
$\mu_\lambda\otimes\mu_{\lambda_0}$ (i.e., expected SPO loss when type draws are independent).
TV and KL are computed directly on the weights.
The Wasserstein distances $W_1$ and $W_2$ are computed on the three-point type space $\{1,2,3\}$ with ground cost
$|i-j|$ (for $W_1$) and $|i-j|^2$ (for $W_2$).

\begin{table}[H]
\centering
\begin{tabular}{lccccccc}
\hline
$\lambda = (\lambda_1,\lambda_2,\lambda_3)$
& $\wdfo$
& $R(\mu_\lambda,\mu_{\lambda_0})$
& $\wdfr$
& TV
& KL$(\lambda\|\lambda_0)$
& $W_1$
& $W_2$ \\
\hline
$(\tfrac14,\tfrac14,\tfrac12)$
& 0.0833
& 1.0208
& 1.9792
& 0.1667
& 0.0589
& 0.2500
& 0.5000 \\

$(\tfrac15,\tfrac15,\tfrac35)$
& 0.1333
& 0.9667
& 1.8667
& 0.2667
& 0.1483
& 0.4000
& 0.6325 \\

$(\tfrac16,\tfrac16,\tfrac23)$
& 0.1667
& 0.9306
& 1.7917
& 0.3333
& 0.2310
& 0.5000
& 0.7071 \\

$(\tfrac1{10},\tfrac1{10},\tfrac8{10})$
& 0.4000
& 0.8583
& 1.3750
& 0.4667
& 0.4596
& 0.7000
& 0.9832 \\
\hline
\end{tabular}
\caption{Various Distances between mixture-weight vectors $\lambda$ and the baseline $\lambda_0=(1/3,1/3,1/3)$.}
\label{tab:mixture_distance_to_baseline}
\end{table}

In Table~\ref{tab:mixture_distance_to_baseline}, as the row index increases, the distribution $\lambda$ becomes increasingly imbalanced. In this regime, the optimistic DF distance, $\tv$, KL divergence, and the Wasserstein distances $W_1$ and $W_2$ all increase, reflecting a larger geometric discrepancy induced by the growing imbalance. On the other hand, $\wdfr$ captures the worst-case alignment of customer types between the two populations and decreases as the imbalance becomes more pronounced. As expected, we observe
$\wdfo(\mu_\lambda,\mu_{\lambda_0}) \le R(\mu_\lambda,\mu_{\lambda_0}) \le \wdfr(\mu_\lambda,\mu_{\lambda_0})$, and the gap between $\wdfo$ and $\wdfr$ can be substantial. 

\subsection{Real-data analysis on Parkinson’s disease}\label{sec:realdata}

This subsection evaluates the proposed decision-focused divergences and interpolants on the real-world \emph{Parkinson’s Telemonitoring} dataset, which contains repeated measurements per patient. The numerical evaluation focuses on three perspectives: (i) constructing intermediate-time distributions via decision-focused interpolations (Section~\ref{sec:interpolant}); (ii) evaluating the impact of the entropy regularization term in optimal transport (Proposition~\ref{prop:erdf}); and (iii) examining the estimation error rate as a function of sample size (Theorem~\ref{ThmSampleComplexity}).

\subsubsection{Dataset and problem setup.}

We use the Parkinson’s Telemonitoring dataset from the UCI repository, which contains $5{,}875$ total observations from $42$ Parkinson’s disease patients. Each observation includes a measurement time (days from study start), two clinical severity measures, \texttt{motor\_UPDRS} and \texttt{total\_UPDRS}, and multiple voice-related features. The data are longitudinal, as the same patient is measured repeatedly over time. This structure enables both population-level distribution comparisons and patient-tracked evaluations. We focus on patients’ disease progression over the following three time windows: $[t-5, t+5]$ for $t \in \{50, 100, 150\}$.

For the decision-making problem, the decision variable is a weekly care plan
\[
w = (h_{\mathrm{exercise}}, h_{\mathrm{nursing}}, h_{\mathrm{rest}}, h_{\mathrm{speech}}) \in \mathbb{R}^4,
\]
where each coordinate represents the hours per week allocated to a care modality: physical exercise or physiotherapy, nursing or care coordination, sleep or rest optimization, and speech therapy. The objective is to minimize $y^\top w$, where
\[
y = (y_{\mathrm{ex}}, y_{\mathrm{nurse}}, y_{\mathrm{rest}}, y_{\mathrm{speech}}) \in \mathbb{R}^4
\]
is constructed from two severity measures and two additional covariates, $
(\texttt{motor\_UPDRS}, \texttt{total\_UPDRS}, \texttt{age}, \texttt{PPE})$.
The detailed data processing procedure is provided in Appendix~\ref{append:numerical}. The feasible region consists of five candidate therapy plans, which are also detailed in Appendix~\ref{append:numerical}. Before analyzing couplings across different days, Figure~\ref{fig:true} illustrates the joint density of two selected variables across the $42$ patients.

\begin{figure}[ht]
    \centering
    \includegraphics[width=\linewidth]{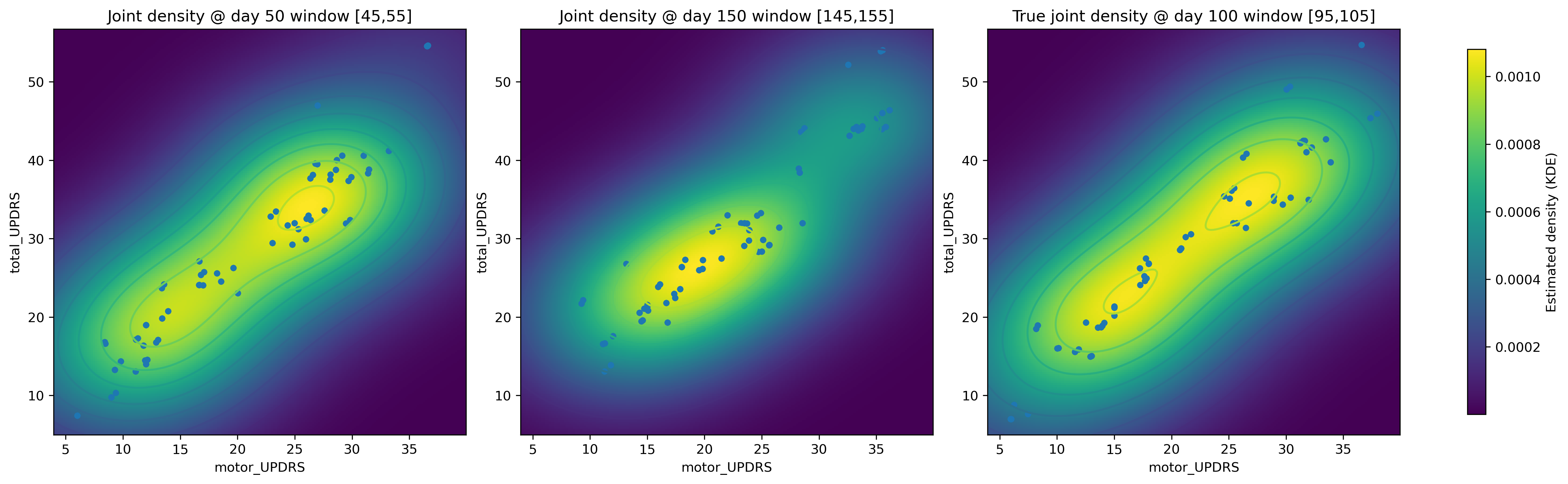}
    \caption{Empirical distribution of patients during three different periods.}
    \label{fig:true}
\end{figure}
When we evaluate the average decision loss (SPO loss) between Day~50 and Day~150, the traditional method directly plugs in the marginal distributions at the two time points, without considering the coupling, yielding an average regret of $R(\hat\mu_{50}, \hat\nu_{150}) \approx 0.4566$. However, in practice, if we track each patient and average the SPO loss across patients between Day~50 and Day~150, the regret is only $0.0359$, with a median of $0$. This order-of-magnitude gap indicates that independent evaluations that ignore temporal alignment can substantially overestimate risk in practice, which motivates explicitly modeling couplings across time.

\subsubsection{Decision-focused interpolations.}

We first implement McCann interpolations for optimistic, robust, and independent decision-focused couplings without regularization. In particular, we use the distributions around Day~50 and Day~150 to interpolate the distribution around Day~100. Numerical details are provided in Appendix~\ref{append:numerical}. The predicted densities, together with the true density, are shown in Figure~\ref{fig:interplo}.

\begin{figure}[ht]
    \centering
    \includegraphics[width=\linewidth]{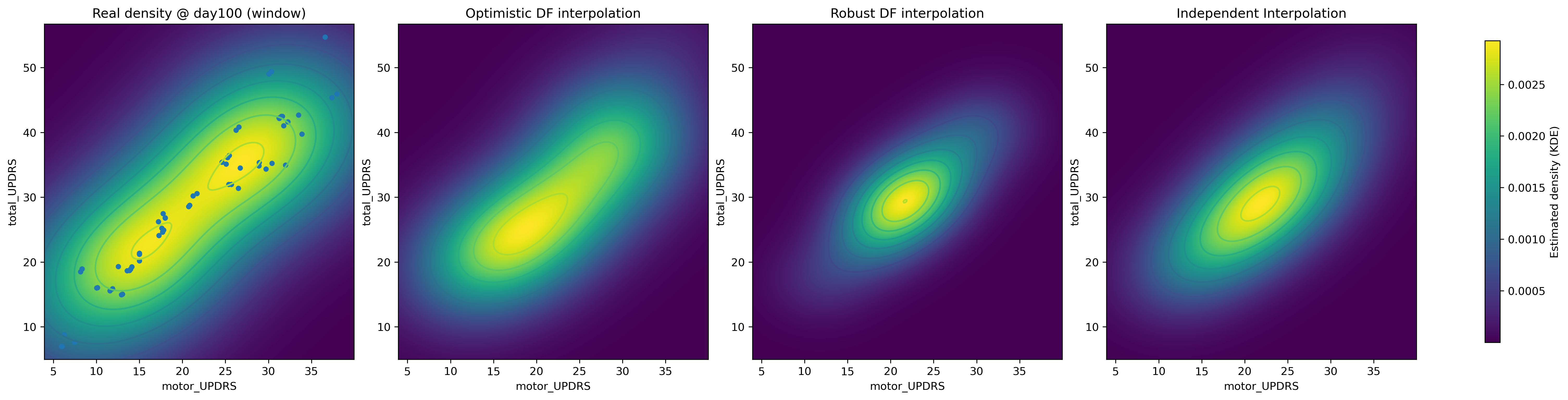}
    \caption{Predicted distributions obtained via decision-focused interpolations.}
    \label{fig:interplo}
\end{figure}

Figure~\ref{fig:interplo} shows that the optimistic decision-focused interpolation is closest to the true distribution. This result aligns with the intuition that each patient’s disease progression depends on their current status, whereas interpolations based on the entire population may induce larger deviations at the individual level.

\subsubsection{Entropy regularization.}

Recall that Proposition~\ref{prop:erdf} shows that introducing entropy regularization yields interpolations that lie between the extreme optimistic and robust cases. We therefore add an entropy regularization term and vary the value of $\epsilon$. The average SPO loss, evaluated by tracking each patient using the observed data around Day~100, is reported in Figure~\ref{fig:regular}.

\begin{figure}[ht]
    \centering
    \includegraphics[width=0.8\linewidth]{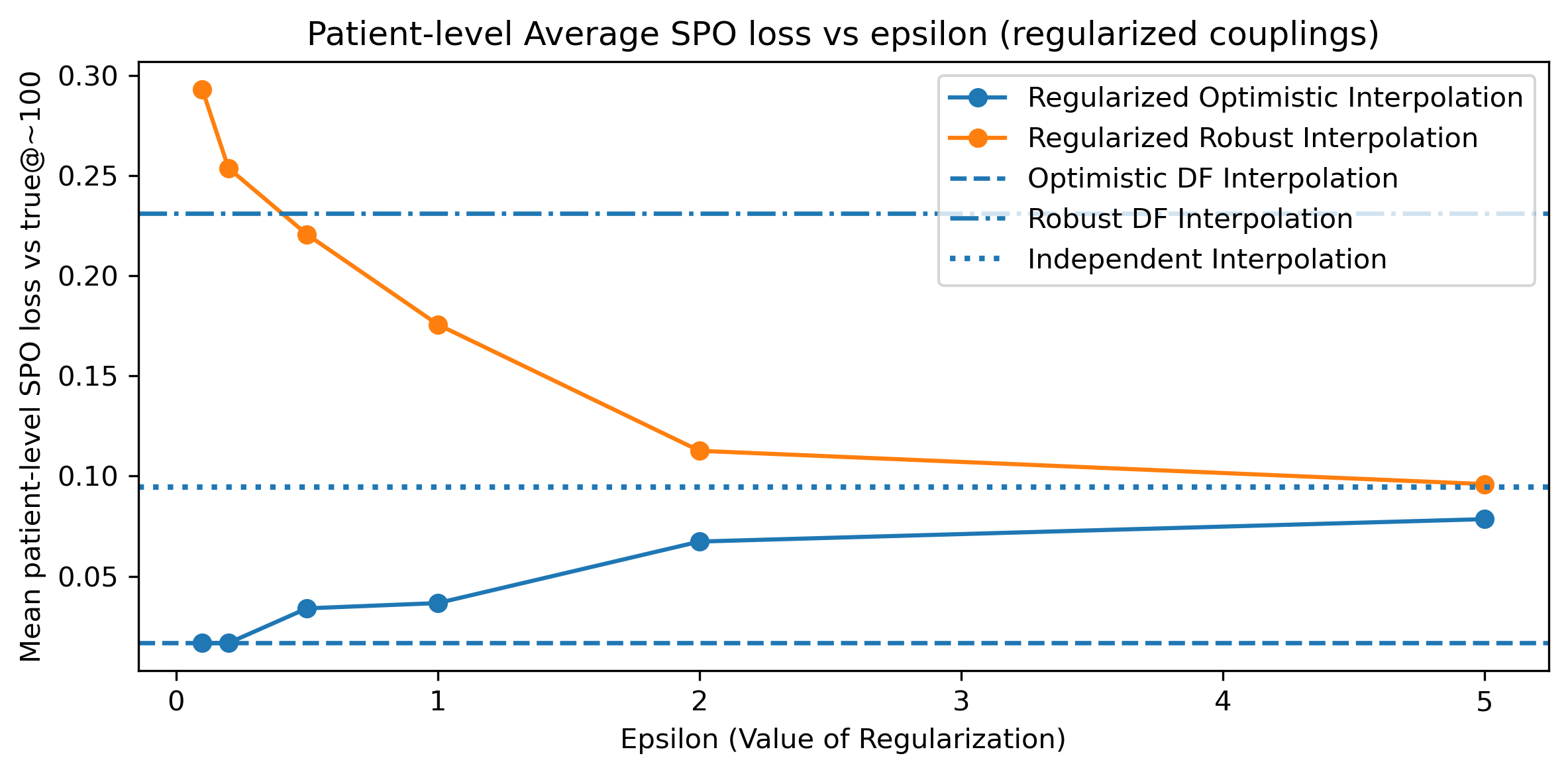}
    \caption{Impact of entropy regularization on decision-focused interpolation. SPO loss is evaluated by tracking each patient.}
    \label{fig:regular}
\end{figure}

Figure~\ref{fig:regular} shows that as the regularization parameter $\epsilon$ increases, the performance of decision-focused interpolations approaches that of the independent interpolation. Since the evaluation is based on real patient trajectories, the optimistic and robust cases do not necessarily provide uniform upper or lower bounds for all values of $\epsilon$, especially when $\epsilon$ is small.

\subsubsection{Estimation error rates.}

Theorem~\ref{ThmSampleComplexity} establishes an estimation error bound of order $\tilde{\mathcal{O}}(1/n)$ for coupling estimation. To illustrate this result, we visualize the estimation error as a function of sample size. The estimation error curves are shown in Figure~\ref{fig:estimation}.

\begin{figure}[ht]
    \centering
    \includegraphics[width=0.8\linewidth]{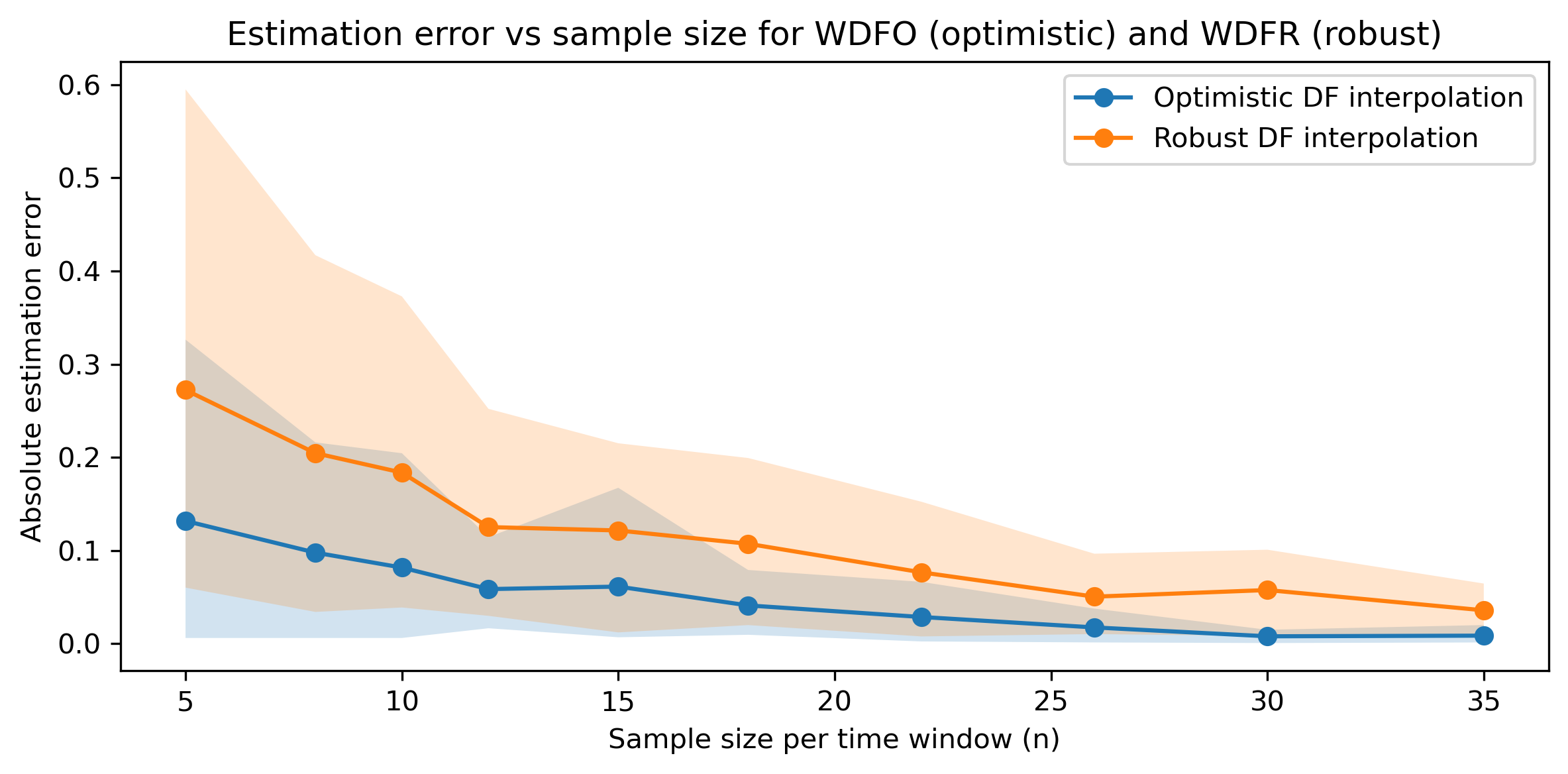}
    \caption{Estimation error for McCann interpolations.}
    \label{fig:estimation}
\end{figure}

Figure~\ref{fig:estimation} demonstrates that the estimation error converges effectively to zero as the sample size increases. The results also indicate that the robust decision-focused interpolation exhibits a larger estimation error than the optimistic one. For additional results and details of the numerical study, please refer to Appendix~\ref{append:numerical}.

\section{Conclusion}\label{sec:Conclusion}

In this paper, we propose a new metric to quantify distances between distributions in stochastic optimization problems. The proposed decision-focused distance is closely related to classical metrics such as total variation, KL divergence, and Wasserstein distance. We develop efficient algorithms for computing this distance and derive dimension-independent estimation error bounds. Real-world numerical experiments further demonstrate the value of the proposed decision-focused divergence. This metric provides a foundation for future work on decision-focused clustering, anomaly detection, regression, uncertainty set construction, and related problems in stochastic optimization.

\ACKNOWLEDGMENT{We thank Dr. Soumik Pal for his insightful feedback on this paper during his visit to Chapel Hill. Suhan Liu acknowledges financial support from the National Science Foundation [Grant DMS-2450005].}


\bibliographystyle{informs2014} 
\bibliography{reference} 





%
%
%

\newpage
\begin{APPENDICES}

 \section{LP Formulation for DF Optimal Transport under Empirical Distribution}\label{append:OT_LP}

Let
\[
\hat{\mu}_{n_\mu}:=\sum_{i=1}^{n_\mu} a_i\,\delta_{x_i},
\qquad
\hat{\nu}_{n_\nu}:=\sum_{j=1}^{n_\nu} b_j\,\delta_{y_j},
\]
where $a_i\ge 0$, $b_j\ge 0$, $\sum_i a_i=\sum_j b_j=1$ (uniform weights correspond to
$a_i=1/n_\mu$ and $b_j=1/n_\nu$). Any coupling between $\hat{\mu}_{n_\mu}$ and $\hat{\nu}_{n_\nu}$ can be identified
with a nonnegative matrix $\Pi\in\bbR_+^{n_\mu\times n_\nu}$ whose row and column sums match $a:=(a_i)$ and
$b:=(b_j)$. The empirical optimistic and robust DF distances are therefore
\begin{align*}
\wdfo(\hat{\mu}_{n_\mu},\hat{\nu}_{n_\nu})
&=\min_{\Pi\ge 0}\ \sum_{i=1}^{n_\mu}\sum_{j=1}^{n_\nu} \lspo(x_i,y_j)\,\Pi_{ij}
\quad\text{s.t.}\quad \Pi\one=b,\ \Pi^\top\one=a, \notag\\
\wdfr(\hat{\mu}_{n_\mu},\hat{\nu}_{n_\nu})
&=\max_{\Pi\ge 0}\ \sum_{i=1}^{n_\mu}\sum_{j=1}^{n_\nu} \lspo(x_i,y_j)\,\Pi_{ij}
\quad\text{s.t.}\quad \Pi\one=b,\ \Pi^\top\one=a.
\end{align*}

Solving the above two LPs for calculating the $\wdfo$ and $\wdfr$ is generally tractable, when the sample sizes $n_\nu$ and $n_\mu$ are small. When they are large or when $\mu$ and $\nu$ are continuous, we instead use the method in Section \ref{sec:calculation}.

\section{Motivation for the Robust DF Distance: Opportunism Robustness}\label{append:robust_motivation}

The \ref{WDFR} in Definition~\ref{def:df_dist} can be interpreted through the lens of robust
decision-making under uncertainty about how a proxy cost vector relates to the true one. To see this, consider an
uncertainty set $\mathcal U\subset\bbR^d$ that contains all plausible realizations of a cost vector. Suppose a
decision-maker observes (or predicts) a proxy $x\in\mathcal U$ and implements the downstream decision $w^*(x)$, but
the realized (true) cost is some $y\in\mathcal U$. A natural robustness objective is to choose $x$ so as to minimize
the worst-case decision degradation measured by the SPO loss:
\begin{equation}\label{eq:set_based_robust_spo}
    \min_{x\in\mathcal U}\ \max_{y\in\mathcal U}\ \lspo(x,y)
    \;=\;
    \min_{x\in\mathcal U}\ \max_{y\in\mathcal U}\ \bigl(y^\top w^*(x)-y^\top w^*(y)\bigr).
\end{equation}
This is a predict--then--optimize analogue of classical robust optimization: we do not directly choose $w\in S$, but
rather choose a proxy $x$ that determines the implemented decision $w^*(x)$, and we evaluate it against the
worst-case true cost $y$.

In stochastic settings, the uncertainty-set viewpoint is naturally replaced by distributional information. Here,
$\mu$ and $\nu$ represent the marginal laws of the proxy $X$ and the true vector $Y$, respectively, but the
\emph{dependence} between $X$ and $Y$ may be unknown or misspecified. In such cases, it is reasonable to guard
against opportunistic (overly favorable) dependence assumptions by considering the worst-case alignment of outcomes
drawn from $\mu$ and $\nu$. Since $\Gamma(\mu,\nu)$ is precisely the set of all joint laws consistent with these
marginals, the robust DF distance
\[
\wdfr(\mu,\nu)
=\sup_{\gamma\in\Gamma(\mu,\nu)} \wspo(\mu,\nu;\gamma)
=\sup_{\gamma\in\Gamma(\mu,\nu)} \EE_{\gamma}\!\left[\lspo(X,Y)\right]
\]
can be viewed as a distributional analogue of \eqref{eq:set_based_robust_spo}. In this analogy, the ``decision'' is
a randomized policy represented by the proxy distribution $\mu$: rather than choosing a single proxy $x$, the
decision-maker commits to a distribution over proxy cost vectors, observes a realization $X\sim\mu$, and implements
the induced decision $w^*(X)$.

Under this interpretation, the coupling $\gamma\in\Gamma(\mu,\nu)$ plays the role of an adversary. After the
decision-maker fixes the proxy marginal $\mu$ (and the true marginal $\nu$ is given), the adversary selects a
dependence structure between $X\sim\mu$ and $Y\sim\nu$, i.e., a joint law $\gamma$ with these
marginals, to maximize the expected SPO loss. The robust DF distance
\[
\wdfr(\mu,\nu)
=\sup_{\gamma\in\Gamma(\mu,\nu)} \EE_{\gamma}\!\left[\lspo(X,Y)\right]
\]
therefore quantifies the worst-case expected decision degradation over all joint distributions consistent with the
given marginals.

Thus, $\wdfr(\mu,\nu)$ is a distributional analogue of \eqref{eq:set_based_robust_spo}: the pointwise maximization
over $y\in\mathcal U$ is replaced by a maximization over couplings $\gamma\in\Gamma(\mu,\nu)$, and the deterministic
proxy choice $x$ is replaced by a randomized proxy $X\sim\mu$. Moreover, if the decision-maker is allowed to choose
the proxy marginal within a class of distributions $\mathcal{M}$, then minimizing the robust DF distance,
\[
\inf_{\mu\in\mathcal{M}} \wdfr(\mu,\nu),
\]
amounts to selecting a proxy distribution that minimizes the worst-case expected SPO loss over all couplings
consistent with $(\mu,\nu)$.

\section{McCann interpolation via the $W_2$ reduction}\label{sec:mccann_df_reduction}
In this Appendix, we provide another method to construct the interpolation via the $W_2$ reduction in the space $\Gamma(w^*_\#\mu_0,\mu_1)$.
Unlike the quadratic optimal transport setting, the SPO transport cost $\lspo(x,y)$ is not induced by a
quadratic metric (and can be discontinuous due to the piecewise-constant oracle $w^*$). As a result, the classical
geodesic/convexity properties of McCann interpolation for $W_2$ do not automatically extend to the decision-focused
interpolants in Section~\ref{sec:df_interpolations}.

For the \emph{optimistic} DF distance, however, Theorems~\ref{thm:1}--\ref{thm:2} imply a reduction to quadratic
transport after pushing $\mu$ forward through the oracle map. Recall $w^*(\cdot)$ from \eqref{eq:downstream_oracle}
and define
\[
\bar w^*(x):=-w^*(x),
\qquad
\alpha := (\bar w^*)_{\#}\mu .
\]
Given any coupling $\gamma\in\Gamma(\mu,\nu)$, let
\[
\eta := (\bar w^*\times \mathrm{Id})_{\#}\gamma \in \Gamma(\alpha,\nu).
\]
Since $\lspo(x,y)=y^\top w^*(x)-y^\top w^*(y)=-y^\top \bar w^*(x)-y^\top w^*(y)$ and
$-a^\top y=\tfrac12(\|a-y\|_2^2-\|a\|_2^2-\|y\|_2^2)$, we obtain the decomposition
\begin{equation}\label{eq:wdfo_reduction_general}
\int \lspo(x,y)\,\gamma(dx,dy)
=
\frac12\int \|a-y\|_2^2\,\eta(da,dy)
\;+\; C(\mu,\nu),
\end{equation}
where $C(\mu,\nu)$ depends only on the marginals (hence is independent of the coupling). Consequently,
\begin{equation}\label{eq:wdfo_w2_reduction}
\wdfo(\mu,\nu)
=
\inf_{\eta\in\Gamma(\alpha,\nu)}\frac12\int \|a-y\|_2^2\,\eta(da,dy)
\;+\; C(\mu,\nu)
=
\frac12 W_2^2(\alpha,\nu)+C(\mu,\nu).
\end{equation}
This reduction suggests defining a standard McCann displacement interpolation in the reduced (decision) space
between $\alpha$ and $\nu$, and then \emph{lifting} it back to couplings whose first marginal is $\mu$.

\begin{proposition}[McCann geodesic in decision space and consistent lifts]\label{prop:mccann_lift}
Let $\mu,\nu\in\cP_2(\bbR^d)$ and define $\bar w^*(x):=-w^*(x)$ and $\alpha:=(\bar w^*)_{\#}\mu$.
Let $\eta^*\in\Gamma(\alpha,\nu)$ be an optimal coupling for the quadratic cost, i.e.,
\[
\eta^* \in \arg\min_{\eta\in\Gamma(\alpha,\nu)} \int_{\bbR^d\times\bbR^d}\|a-y\|_2^2\,\eta(da,dy).
\]
For $t\in[0,1]$ define $\pi_t(a,y)=(1-t)a+ty$ and set
\[
\nu_t := (\pi_t)_{\#}\eta^*, 
\qquad 
\eta_t := (\mathrm{Id},\pi_t)_{\#}\eta^* \in \Gamma(\alpha,\nu_t).
\]
Let $\{w_k\}_{k=1}^K$ be the extreme points of $S$ and define the (measurable) decision regions
\[
\mathcal{C}_k := \{x\in\bbR^d:\; w^*(x)=w_k\},\qquad k=1,\dots,K,
\]
which form a partition of $\bbR^d$ up to boundaries. Then $\alpha$ is supported on $\{-w_k\}_{k=1}^K$ and satisfies
$\alpha(\{-w_k\})=\mu(\mathcal{C}_k)$. For each $t\in[0,1]$, define a lifted coupling $\gamma_t$ between $\mu$ and
$\nu_t$ by
\begin{equation}\label{eq:gamma_t_lift}
\gamma_t(A,B)
:=
\sum_{k=1}^K
\frac{\mu(A\cap \mathcal{C}_k)}{\mu(\mathcal{C}_k)}\,
\eta_t(\{-w_k\}\times B),
\qquad A,B\subseteq\bbR^d\ \text{measurable},
\end{equation}
with the convention that terms with $\mu(\mathcal{C}_k)=0$ are omitted. Then:
\begin{enumerate}
\item \textbf{(McCann geodesic / constant speed in the reduced space).}
The curve $\{\nu_t\}_{t\in[0,1]}$ is the McCann displacement interpolation between $\alpha$ and $\nu$; in particular,
\[
W_2(\nu_s,\nu_t)=|t-s|\,W_2(\alpha,\nu),\qquad \forall s,t\in[0,1].
\]

\item \textbf{(Displacement identities).}
If $(A,Y)\sim\eta^*$ and $Y_t=(1-t)A+tY$, then for all $s,t\in[0,1]$,
\[
\EE\|Y_t-A\|_2^2 = t^2\,W_2^2(\alpha,\nu),
\qquad
\EE\|Y_t-Y_s\|_2^2 = (t-s)^2\,W_2^2(\alpha,\nu).
\]

\item \textbf{(Lift consistency).}
For each $t\in[0,1]$, the measure $\gamma_t$ defined in \eqref{eq:gamma_t_lift} is a coupling in
$\Gamma(\mu,\nu_t)$ and satisfies the push-forward identity
\[
(\bar w^*\times \mathrm{Id})_{\#}\gamma_t = \eta_t.
\]
In particular, $(\bar w^*\times \mathrm{Id})_{\#}\gamma_1=\eta^*$.

\item \textbf{(Per-time optimality for the optimistic DF distance).}
Fix $t\in[0,1]$. If $\tilde\eta_t$ is any optimal coupling for the quadratic cost between $\alpha$ and $\nu_t$, then
the lift $\tilde\gamma_t$ defined by \eqref{eq:gamma_t_lift} with $\eta_t$ replaced by $\tilde\eta_t$ is optimal for
$\wdfo(\mu,\nu_t)$, i.e.,
\[
\int \lspo(x,y)\,\tilde\gamma_t(dx,dy)
=
\inf_{\gamma\in\Gamma(\mu,\nu_t)}\int \lspo(x,y)\,\gamma(dx,dy).
\]
In particular, the lifted couplings obtained from the McCann plan $\eta_t$ provide a canonical family of
DF-optimal couplings between $\mu$ and $\nu_t$.
\end{enumerate}
\end{proposition}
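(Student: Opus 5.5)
We treat the four items in order, relying on standard facts about McCann interpolation together with the reduction already established in Theorems~\ref{thm:1}--\ref{thm:2} and \eqref{eq:wdfo_w2_reduction}.

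\textbf{Support of $\alpha$.} Because $w^*$ takes values in $\{w_k\}$, we have $\alpha(\{-w_k\})=\mu((\bar w^*)^{-1}(\{-w_k\}))=\mu(\mathcal C_k)$ directly from Definition~\ref{def:general_pushforward}.

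\textbf{Item 2.} This is a direct computation. Write $Y_t-A=t(Y-A)$ and $Y_t-Y_s=(t-s)(Y-A)$. Then $\EE\|Y_t-A\|^2=t^2\EE\|Y-A\|^2$ and $\EE\|Y_t-Y_s\|^2=(t-s)^2\EE\|Y-A\|^2$. Optimality of $\eta^*$ gives $\EE\|Y-A\|^2=W_2^2(\alpha,\nu)$.

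\textbf{Item 1.} The upper bound is immediate: the law of $(Y_s,Y_t)$ is a coupling of $\nu_s$ and $\nu_t$, so item 2 gives $W_2(\nu_s,\nu_t)\le |t-s|W_2(\alpha,\nu)$. For the matching lower bound, take $s\le t$ and apply the triangle inequality for $W_2$:
\[
W_2(\alpha,\nu)\le W_2(\alpha,\nu_s)+W_2(\nu_s,\nu_t)+W_2(\nu_t,\nu).
\]
Using $\nu_0=\alpha$ and $\nu_1=\nu$, the first and last terms are bounded by $sW_2(\alpha,\nu)$ and $(1-t)W_2(\alpha,\nu)$. Substituting these forces $W_2(\nu_s,\nu_t)\ge (t-s)W_2(\alpha,\nu)$, so all three inequalities hold with equality.

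\textbf{Item 3.} This is the same verification as in Theorem~\ref{thm:2}, applied with $\eta_t$ in place of $\gamma^*$.

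For the first marginal, take $B=\bbR^d$. Since $\eta_t$ has first marginal $\alpha$, we get $\eta_t(\{-w_k\}\times\bbR^d)=\mu(\mathcal C_k)$, hence $\gamma_t(A,\bbR^d)=\sum_k\mu(A\cap\mathcal C_k)=\mu(A)$. Terms with $\mu(\mathcal C_k)=0$ may be omitted because they carry zero mass, and the $\mathcal C_k$ partition $\bbR^d$ after fixing a measurable tie-breaking rule for $w^*$.

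For the second marginal, take $A=\bbR^d$: $\gamma_t(\bbR^d,B)=\sum_k\eta_t(\{-w_k\}\times B)=\nu_t(B)$, since $\alpha$ is concentrated on $\{-w_k\}$.

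For the push-forward identity, note that $(\bar w^*\times\mathrm{Id})_\#\gamma_t(\{-w_j\}\times B)=\gamma_t(\mathcal C_j,B)=\eta_t(\{-w_j\}\times B)$. Rectangles of the form $\{-w_j\}\times B$ form a $\pi$-system generating the relevant $\sigma$-algebra on the support, so the two measures agree. Setting $t=1$ gives $\eta_1=\eta^*$.

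\textbf{Item 4.} Apply \eqref{eq:wdfo_reduction_general} with the pair $(\mu,\nu_t)$. For any $\gamma\in\Gamma(\mu,\nu_t)$,
\[
\int\lspo\,d\gamma=\tfrac12\int\|a-y\|^2\,d\big[(\bar w^*\times\mathrm{Id})_\#\gamma\big]+C(\mu,\nu_t).
\]
By the computation in item 3, the lift $\tilde\gamma_t$ satisfies $(\bar w^*\times\mathrm{Id})_\#\tilde\gamma_t=\tilde\eta_t$. Therefore $\int\lspo\,d\tilde\gamma_t=\tfrac12W_2^2(\alpha,\nu_t)+C(\mu,\nu_t)$, which equals $\wdfo(\mu,\nu_t)$ by \eqref{eq:wdfo_w2_reduction}; equivalently, invoke Theorem~\ref{thm:2} directly.

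For the final sentence of item 4, we must show that $\eta_t$ is itself quadratically optimal between $\alpha$ and $\nu_t$. Item 2 gives $\int\|a-y\|^2d\eta_t=t^2W_2^2(\alpha,\nu)$, and item 1 gives $W_2^2(\alpha,\nu_t)=t^2W_2^2(\alpha,\nu)$, so the cost of $\eta_t$ attains the optimum. This step is the main subtlety: it is exactly what lets us substitute $\eta_t$ for $\tilde\eta_t$, and the rest is bookkeeping.

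One technical point needs care throughout. The identity \eqref{eq:wdfo_reduction_general} requires finite second moments, which hold because $\nu_t$ is a push-forward of the compactly supported $\alpha$ and of $\nu\in\cP_2$. It also requires that the $\mathcal C_k$ be disjoint, which we ensure by fixing a measurable selection of $w^*$.
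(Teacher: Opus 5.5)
Your proposal is correct and follows the same route as the paper. Both derive the displacement identities from $Y_t-A=t(Y-A)$, check the marginals and $(\bar w^*\times\mathrm{Id})_{\#}\gamma_t=\eta_t$ directly on sets $\{-w_k\}\times B$, and get per-time optimality from the reduction \eqref{eq:wdfo_reduction_general}. Your triangle-inequality proof of the constant-speed identity fills in a step the paper only calls standard. Your check that $\int\|a-y\|_2^2\,d\eta_t=t^2W_2^2(\alpha,\nu)=W_2^2(\alpha,\nu_t)$, i.e.\ that $\eta_t$ itself is quadratic-optimal, supplies what the ``in particular'' clause of item 4 needs and the paper leaves implicit.
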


\begin{proof}{\bfseries Proof of Proposition \ref{prop:mccann_lift}}
\textbf{(1)--(2).}
These are standard consequences of quadratic optimal transport. If $(A,Y)\sim\eta^*$ and $Y_t=(1-t)A+tY$, then
$\nu_t := (Y_t)_{\#}\mathbb{P}$ is the McCann displacement interpolation between $\alpha$ and $\nu$, yielding the constant-speed
identity. Moreover, $Y_t-A=t(Y-A)$ and $Y_t-Y_s=(t-s)(Y-A)$, so the stated second-moment formulas follow from
$\EE\|Y-A\|_2^2=W_2^2(\alpha,\nu)$.

\textbf{(3).}
We first verify that $\gamma_t\in\Gamma(\mu,\nu_t)$. For any measurable $A\subseteq\bbR^d$,
\begin{align*}
 \gamma_t(A,\bbR^d)
&=
\sum_{k:\mu(\mathcal{C}_k)>0}
\frac{\mu(A\cap \mathcal{C}_k)}{\mu(\mathcal{C}_k)}\,
\eta_t(\{-w_k\}\times \bbR^d)\\
&=
\sum_{k:\mu(\mathcal{C}_k)>0}
\frac{\mu(A\cap \mathcal{C}_k)}{\mu(\mathcal{C}_k)}\,
\alpha(\{-w_k\})\\
&=\sum_{k=1}^K \mu(A\cap \mathcal{C}_k)\\
&=\mu(A),   
\end{align*}
since $\eta_t$ has first marginal $\alpha$ and $\alpha(\{-w_k\})=\mu(\mathcal{C}_k)$. Similarly, for any measurable
$B\subseteq\bbR^d$,
\[
\gamma_t(\bbR^d,B)
=
\sum_{k:\mu(\mathcal{C}_k)>0}
\eta_t(\{-w_k\}\times B)
=
\eta_t(\bbR^d\times B)
=
\nu_t(B),
\]
so the second marginal is $\nu_t$.

For the push-forward identity, note that $\bar w^*(x)=-w_k$ for $x\in\mathcal{C}_k$. Thus, for any measurable
$B\subseteq\bbR^d$,
\[
\bigl((\bar w^*\times \mathrm{Id})_{\#}\gamma_t\bigr)(\{-w_k\}\times B)
=
\gamma_t(\mathcal{C}_k,B)
=
\eta_t(\{-w_k\}\times B),
\]
which implies $(\bar w^*\times \mathrm{Id})_{\#}\gamma_t=\eta_t$.

\textbf{(4).}
Fix $t\in[0,1]$ and let $\tilde\eta_t$ be optimal for the quadratic cost between $\alpha$ and $\nu_t$.
Let $\tilde\gamma_t$ be its lift as in \eqref{eq:gamma_t_lift}. By part (3),
$(\bar w^*\times \mathrm{Id})_{\#}\tilde\gamma_t=\tilde\eta_t$.
Using the reduction identity \eqref{eq:wdfo_reduction_general} with $\nu$ replaced by $\nu_t$, we obtain for any
$\gamma\in\Gamma(\mu,\nu_t)$ and $\eta=(\bar w^*\times \mathrm{Id})_{\#}\gamma\in\Gamma(\alpha,\nu_t)$,
\[
\int \lspo(x,y)\,\gamma(dx,dy)
=
\frac12\int \|a-y\|_2^2\,\eta(da,dy) + C(\mu,\nu_t),
\]
where $C(\mu,\nu_t)$ does not depend on the coupling. Minimizing the left-hand side over $\Gamma(\mu,\nu_t)$ is
therefore equivalent to minimizing the quadratic cost over $\Gamma(\alpha,\nu_t)$, and the lift $\tilde\gamma_t$
attains this minimum. This proves the claimed optimality of $\tilde\gamma_t$ for $\wdfo(\mu,\nu_t)$.
\end{proof}

\paragraph{Discussion and comparison with the coupling-driven interpolant.}
Proposition~\ref{prop:mccann_lift} provides four complementary statements that clarify what is preserved (and what is
not) when one constructs an interpolation path via the $W_2$ reduction. We discuss each item and compare it with
Theorem~\ref{thm:dfo_mccann_one_sided}, which analyzes the direct coupling-driven interpolant
$\nu_t=(\pi_t)_{\#}\gamma^*$ built from an optimizer of $\wdfo(\mu,\nu)$ in the original data space.

\smallskip
\noindent\textbf{(1) Constant speed, but in the reduced (decision) space.}
Item~(1) states that $\{\nu_t\}_{t\in[0,1]}$ is a genuine constant-speed $W_2$ geodesic between $\alpha$ and $\nu$,
namely $W_2(\nu_s,\nu_t)=|t-s|\,W_2(\alpha,\nu)$. This is a \emph{stronger and two-sided} statement than the
one-sided bound in Theorem~\ref{thm:dfo_mccann_one_sided}, but it applies to the reduced-space geometry induced by
the quadratic cost. In contrast, Theorem~\ref{thm:dfo_mccann_one_sided} controls the growth of the optimistic DF
distance along a path that is defined directly by a DF-optimal coupling in the original space, without requiring any
$W_2$ structure.

\smallskip
\noindent\textbf{(2) Quadratic energy identities along the path.}
Item~(2) records exact second-moment identities along the displacement interpolation. These identities are a hallmark
of quadratic OT and are useful for quantifying the ``energy'' of the path and for deriving stability bounds.
No analogue of these exact equalities is expected for the coupling-driven DF interpolant in
Theorem~\ref{thm:dfo_mccann_one_sided}, because $\lspo$ is generally non-quadratic and the oracle map $w^*$ induces
piecewise-constant discontinuities.

\smallskip
\noindent\textbf{(3) A canonical lift back to the data space.}
Item~(3) constructs a family of couplings $\gamma_t\in\Gamma(\mu,\nu_t)$ that are consistent with the reduced-space
plan in the sense that $(\bar w^*\times \mathrm{Id})_{\#}\gamma_t=\eta_t$. This lift makes explicit how to relate the
reduced-space interpolation to the original decision-focused problem: the path $\{\nu_t\}$ is generated in the
reduced space, but the corresponding couplings with the original marginal $\mu$ are recovered by conditioning on the
decision regions $\{\mathcal{C}_k\}$. Theorem~\ref{thm:dfo_mccann_one_sided} does not provide such a lift because it
starts from a data-space DF-optimal coupling $\gamma^*$ directly, and then pushes it forward without changing the
first marginal.

\smallskip
\noindent\textbf{(4) Per-time optimality for $\wdfo(\mu,\nu_t)$.}
Perhaps most importantly, Item~(4) shows that for each fixed $t$, one can choose a quadratic-optimal coupling between
$\alpha$ and $\nu_t$ and lift it to obtain a coupling that is \emph{optimal for} $\wdfo(\mu,\nu_t)$. This is a
per-time optimality statement: at each intermediate marginal $\nu_t$, the construction yields a DF-optimal coupling
between $\mu$ and $\nu_t$. In comparison, Theorem~\ref{thm:dfo_mccann_one_sided} guarantees only the one-sided bound
$\wdfo(\mu,\nu_t)\le t\,\wdfo(\mu,\nu)$ along the interpolant induced by a \emph{single} DF-optimal coupling between
$\mu$ and $\nu$; it does not assert that the pushed-forward coupling remains optimal for the intermediate problems
$(\mu,\nu_t)$. In general, such dynamic optimality need not hold for non-quadratic costs.

Theorem~\ref{thm:dfo_mccann_one_sided} applies to a broad class of decision-focused costs and yields a simple,
coupling-driven path with a monotone (one-sided) distance control based only on concavity of the value function.
Proposition~\ref{prop:mccann_lift}, in contrast, leverages the $W_2$ reduction available for the optimistic DF
distance to recover several \emph{exact} quadratic-OT properties (constant speed, energy identities) and a stronger
notion of intermediate-time optimality via lifting. The two approaches are therefore complementary: the direct
interpolant is more intrinsic to the DF objective, while the reduced-space McCann construction provides sharper
structure when the quadratic reduction applies.

\section{Omitted Proof}

\begin{proof}{\bfseries Proof of Proposition \ref{thm:rescale_wdfo_zero}}
Let $\gamma$ be the joint law of $(X,Y)=(X,\kappa X)$; then $\gamma\in\Gamma(\mu,\nu)$. For any realization, if
$\kappa>0$ then
\[
\arg\min_{w\in S} w^\top (\kappa X)
=\arg\min_{w\in S} \kappa\, w^\top X
=\arg\min_{w\in S} w^\top X,
\]
so by the scale-invariance of the oracle selection we have $w^*(Y)=w^*(X)$. If $\kappa=0$, then $Y=0$ and
$\lspo(X,Y)=0^\top w^*(X)-0^\top w^*(0)=0$. Hence $\lspo(X,Y)=0$ almost surely, and therefore
$\wspo(\mu,\nu;\gamma)=\EE[\lspo(X,Y)]=0$.

Finally, $\lspo(x,y)\ge 0$ for all $x,y$ because $w^*(y)$ minimizes $y^\top w$ over $S$, implying
$y^\top w^*(y)\le y^\top w^*(x)$. Taking the infimum over $\Gamma(\mu,\nu)$ yields $\wdfo(\mu,\nu)=0$.\hfill\Halmos
\end{proof}

\begin{proof}{\bfseries  Proof of Proposition \ref{prop:lips_w1}}
The inequality $\wdfo(\mu,\nu)\le W^O_{\mathrm{sym}}(\mu,\nu)$ is immediate since
$W^O_{\mathrm{sym}}(\mu,\nu)=\wdfo(\mu,\nu)+\wdfo(\nu,\mu)$ and $\wdfo(\nu,\mu)\ge 0$.

To bound $W^O_{\mathrm{sym}}$, fix any coupling $\gamma\in\Gamma(\mu,\nu)$ and let $\gamma^{\top}$ denote its
transpose. By feasibility of $\gamma$ and $\gamma^{\top}$,
\[
\wdfo(\mu,\nu)\le \int \lspo(x,y)\,\gamma(dx,dy)
\quad\text{and}\quad
\wdfo(\nu,\mu)\le \int \lspo(x,y)\,\gamma^{\top}(dx,dy)=\int \lspo(y,x)\,\gamma(dx,dy).
\]
Adding the two inequalities and using $\lspo(x,y)+\lspo(y,x)=(x-y)^\top(w^*(y)-w^*(x))$, we obtain
\[
W^O_{\mathrm{sym}}(\mu,\nu)
\le \int (x-y)^\top\!\big(w^*(y)-w^*(x)\big)\,\gamma(dx,dy)
\le \int \|x-y\|_2\,\|w^*(y)-w^*(x)\|_2\,\gamma(dx,dy).
\]
Since $w^*(x),w^*(y)\in S$, we have $\|w^*(y)-w^*(x)\|_2\le D_W$, and therefore
\[
W^O_{\mathrm{sym}}(\mu,\nu)\le D_W\int \|x-y\|_2\,\gamma(dx,dy).
\]
Taking the infimum over $\gamma\in\Gamma(\mu,\nu)$ yields
$W^O_{\mathrm{sym}}(\mu,\nu)\le D_W\,W_1(\mu,\nu)$.

Finally, on a bounded set $\mathcal{Y}$ we have the elementary bound
$W_1(\mu,\nu)\le \|\mathcal{Y}\|\,\tv(\mu,\nu)$. Combining this with Pinsker's inequality
$\tv(\mu,\nu)\le \sqrt{\tfrac12\,\KL(\mu\|\nu)}$ (when $\KL(\mu\|\nu)<\infty$) gives the stated KL control.\hfill \Halmos
\end{proof}

\begin{proof}{\bfseries Proof of Proposition \ref{prop:lips}}
By definition,
\[
\wdfo(\mu,\nu)= \inf_{\gamma\in\Gamma(\mu,\nu)}\int y^\top\!\big(w^*(x)-w^*(y)\big)\,d\gamma(x,y).
\]
Applying Cauchy--Schwarz to the integrand yields
\[
\wdfo(\mu,\nu)\le
\inf_{\gamma\in\Gamma(\mu,\nu)}
\left(\int \|y\|_2^2\,d\gamma(x,y)\right)^{1/2}
\left(\int\|w^*(x)-w^*(y)\|_2^2\,d\gamma(x,y)\right)^{1/2}.
\]
The first factor depends only on the second marginal of $\gamma$, hence
\[
\left(\int \|y\|_2^2\,d\gamma(x,y)\right)^{1/2}
=
\left(\int\|y\|_2^2 \,d\nu(y)\right)^{1/2}
=:V.
\]

To handle the second factor, push any $\gamma\in\Gamma(\mu,\nu)$ forward through the oracle map in both
coordinates. Specifically, let $\tilde{\gamma}:=(w^*\times w^*)_{\#}\gamma$. Then
$\tilde{\gamma}\in\Gamma(w^*_{\#}\mu,w^*_{\#}\nu)$ and
\[
\int\|w^*(x)-w^*(y)\|_2^2\,d\gamma(x,y)
=
\int\|u-v\|_2^2\,d\tilde{\gamma}(u,v).
\]
Conversely, given any $\tilde{\gamma}\in\Gamma(w^*_{\#}\mu,w^*_{\#}\nu)$, one can construct a coupling
$\gamma\in\Gamma(\mu,\nu)$ whose oracle-level push-forward equals $\tilde{\gamma}$ by sampling
$(U,V)\sim\tilde{\gamma}$ and then, conditional on $(U,V)=(w_i,w_j)$, sampling
$X\sim \mu(\,\cdot\,|D_i)$ and $Y\sim \nu(\,\cdot\,|D_j)$. This construction ensures
$(w^*(X),w^*(Y))\sim\tilde{\gamma}$ and preserves the above quadratic cost identity. Therefore,
\begin{equation} \label{eq:pf1}
\left(\inf_{\gamma\in\Gamma(\mu,\nu)}\int\|w^*(x)-w^*(y)\|_2^2\,d\gamma(x,y)\right)^{1/2}
=
W_2(w^*_{\#}\mu,w^*_{\#}\nu).
\end{equation}
Combining the pieces gives $\wdfo(\mu,\nu)\le V\,W_2(w^*_{\#}\mu,w^*_{\#}\nu)$.

To upper bound $W_2(w^*_{\#}\mu,w^*_{\#}\nu)$ in terms of $\tv$, match the common mass at each atom $w_i$
at zero cost, and transport the remaining mass (of total size $\tv(w^*_{\#}\mu,w^*_{\#}\nu)$) within $S$ at
cost at most $D_W^2$ per unit. This yields
\[
W_2^2(w^*_{\#}\mu,w^*_{\#}\nu)\le D_W^2\,\tv(w^*_{\#}\mu,w^*_{\#}\nu),
\quad\text{hence}\quad
W_2(w^*_{\#}\mu,w^*_{\#}\nu)\le D_W\sqrt{\tv(w^*_{\#}\mu,w^*_{\#}\nu)}.
\]
Finally, Pinsker's inequality $\tv(P,Q)\le \sqrt{\tfrac12\,\KL(P\|Q)}$ gives the KL control.\hfill\Halmos
\end{proof}

\begin{proof}{\bfseries Proof of Proposition \ref{prop:erdf}}
Fix $0\le \varepsilon<\varepsilon'$ and any $\gamma\in\Gamma(\mu,\nu)$. Since $\KL(\gamma\|\mu\otimes\nu)\ge 0$,
\[
\int \lspo(x,y)\,d\gamma(x,y)+\varepsilon'\KL(\gamma\|\mu\otimes\nu)
\;\ge\;
\int \lspo(x,y)\,d\gamma(x,y)+\varepsilon\KL(\gamma\|\mu\otimes\nu).
\]
Taking the infimum over $\gamma\in\Gamma(\mu,\nu)$ shows that $\wdfeo(\mu,\nu;\varepsilon')\ge \wdfeo(\mu,\nu;\varepsilon)$.
The monotonicity of $\wdfer$ follows similarly since the regularization term enters with a minus sign in \eqref{EWDFR}.

For the bounds, note first that $\wdfeo(\mu,\nu;0)=\wdfo(\mu,\nu)$ and $\wdfer(\mu,\nu;0)=\wdfr(\mu,\nu)$. Since the
KL term is nonnegative, we also have $\wdfeo(\mu,\nu;\varepsilon)\ge \wdfo(\mu,\nu)$ for all $\varepsilon\ge 0$.
On the other hand, choosing the independent coupling $\gamma=\mu\otimes\nu$ (for which $\KL(\gamma\|\mu\otimes\nu)=0$)
in \eqref{EWDFO} yields
\[
\wdfeo(\mu,\nu;\varepsilon)
\le \int \lspo(x,y)\,(\mu\otimes\nu)(dx,dy)
= R(\mu,\nu).
\]
This proves $\wdfo(\mu,\nu)\le \wdfeo(\mu,\nu;\varepsilon)\le R(\mu,\nu)$.

Similarly, for \eqref{EWDFR}, we have
\[
\wdfer(\mu,\nu;\varepsilon)
\le \sup_{\gamma\in\Gamma(\mu,\nu)}\int \lspo(x,y)\,d\gamma(x,y)
= \wdfr(\mu,\nu),
\]
and again evaluating \eqref{EWDFR} at $\gamma=\mu\otimes\nu$ gives
$\wdfer(\mu,\nu;\varepsilon)\ge R(\mu,\nu)$. This completes the proof.\hfill\Halmos
\end{proof}

\begin{proof}{\bfseries Proof of Lemma \ref{lemma:cons}}
Let $\ff:\bbR^d\times\bbR^d\to S\times\bbR^d$ be $\ff(x,y):=(w^*(x),y)$. For any
$\gamma\in\Gamma(\mu,\nu)$, the push-forward $\bar{\gamma}:=\ff_{\#}\gamma$ satisfies
$\bar{\gamma}\in\Gamma(w^*_{\#}\mu,\nu)$, and by the change-of-variables formula,
\[
\int h(y)^\top w^*(x)\,d\gamma(x,y)=\int h(y)^\top w\,d\bar{\gamma}(w,y).
\]
Taking infima over $\gamma$ gives
\[
\inf_{\gamma\in\Gamma(\mu,\nu)} \int h(y)^\top w^*(x)\,d\gamma(x,y)
\ge
\inf_{\bar{\gamma}\in\Gamma(w^*_{\#}\mu,\nu)} \int h(y)^\top w\,d\bar{\gamma}(w,y).
\]

For the reverse inequality, write $w^*(x)=\sum_{i=1}^{|\mathfrak{S}|} w_i\,\one_{D_i}(x)$, where
$\mathfrak{S}=\{w_1,\ldots,w_{|\mathfrak{S}|}\}$ is the set of extreme points of $S$ and
$D_i:=\{x:\,w^*(x)=w_i\}$. Fix any $\bar{\gamma}\in\Gamma(w^*_{\#}\mu,\nu)$ and define a measure $\gamma$ on
measurable rectangles by
\begin{equation}\label{gamma}
\gamma(A,B)
:=\sum_{i=1}^{|\mathfrak{S}|}\mu(A\cap D_i)\,
\frac{\bar{\gamma}(\{w_i\}\times B)}{\mu(D_i)},
\qquad A,B\in\mathcal{B}(\bbR^d),
\end{equation}
with the convention that the $i$th term is $0$ when $\mu(D_i)=0$. The definition \eqref{gamma} extends (by
linearity and standard measure arguments) to a coupling $\gamma\in\Gamma(\mu,\nu)$ and satisfies
$\ff_{\#}\gamma=\bar{\gamma}$. Consequently,
\[
\int h(y)^\top w^*(x)\,d\gamma(x,y)=\int h(y)^\top w\,d\bar{\gamma}(w,y),
\]
and taking infima over $\bar{\gamma}$ yields the desired $\le$ inequality. Combining the two directions proves the
claim.\hfill\Halmos
\end{proof}

\begin{proof}{\bfseries Proof of Theorem \ref{thm:2}}
First, $\gamma$ is well-defined by \eqref{eq:thm2_explicit_lift}. For any measurable $A$,
\[
\gamma(A,\bbR^d)=\sum_{k=1}^{|\mathfrak{S}|}\mu(A\cap D_k)\frac{\gamma^*(\{-w_k\}\times \bbR^d)}{\mu(D_k)}
=\sum_{k=1}^{|\mathfrak{S}|}\mu(A\cap D_k)=\mu(A),
\]
since $\gamma^*$ has first marginal $(\bar w^*)_{\#}\mu$ and $(\bar w^*)_{\#}\mu(\{-w_k\})=\mu(D_k)$. Likewise, for
any measurable $B$,
\[
\gamma(\bbR^d,B)=\sum_{k=1}^{|\mathfrak{S}|}\mu(D_k)\frac{\gamma^*(\{-w_k\}\times B)}{\mu(D_k)}
=\sum_{k=1}^{|\mathfrak{S}|}\gamma^*(\{-w_k\}\times B)=\nu(B),
\]
so $\gamma\in\Gamma(\mu,\nu)$.

Next, by construction, pushing $\gamma$ forward through $\ff(x,y)=(\bar w^*(x),y)$ recovers $\gamma^*$, i.e.,
$\ff_{\#}\gamma=\gamma^*$. In the proof of Theorem~\ref{thm:1}, we showed that for any $\gamma\in\Gamma(\mu,\nu)$,
the value $\wspo(\mu,\nu;\gamma)$ equals a constant (depending only on $(\bar w^*)_{\#}\mu$ and $\nu$) plus
$\tfrac12\int \|w-y\|_2^2\,d(\ff_{\#}\gamma)(w,y)$. Therefore, choosing $\gamma$ so that $\ff_{\#}\gamma=\gamma^*$
and using optimality of $\gamma^*$ for the quadratic cost yields $\wspo(\mu,\nu;\gamma)=\wdfo(\mu,\nu)$.\hfill\Halmos
\end{proof}

\begin{proof}{\bfseries Proof of Proposition \ref{prop:dual_wdfo}}
By Definition~\ref{def:df_dist} and $\lspo(x,y)=y^\top w^*(x)-y^\top w^*(y)$,
\[
\wdfo(\mu,\nu)
=\inf_{\gamma\in\Gamma(\mu,\nu)}\int_{\bbR^d\times\bbR^d}\big(y^\top w^*(x)-z(y)\big)\,d\gamma(x,y).
\]
Applying Lemma~\ref{lemma:cons} with $h(y)=y$ yields the reduction
\[
\wdfo(\mu,\nu)
=\inf_{\bar\gamma\in\Gamma(w^*_{\#}\mu,\nu)}\int_{\mathfrak{S}\times\bbR^d}\big(y^\top w-z(y)\big)\,d\bar\gamma(w,y).
\]
Since $w^*_{\#}\mu$ is supported on the finite set $\mathfrak{S}$, we may write
$w^*_{\#}\mu=\sum_{i=1}^{|\mathfrak{S}|}\mu(D_i)\,\delta_{w_i}$, and the above becomes an optimal transport problem
between the discrete measure $w^*_{\#}\mu$ and $\nu$ with cost $c(w_i,y)=c_i(y)$.

The Kantorovich duality theorem for optimal transport (which applies here since the first space is finite and the
cost is measurable and integrable) gives
\[
\inf_{\bar\gamma\in\Gamma(w^*_{\#}\mu,\nu)}\int c(w,y)\,d\bar\gamma(w,y)
=
\sup_{\varphi,\psi}\left\{\int \varphi(w)\,d(w^*_{\#}\mu)(w)+\int \psi(y)\,d\nu(y):\ \varphi(w)+\psi(y)\le c(w,y)\right\},
\]
where $\varphi$ is a function on $\mathfrak{S}$ and $\psi\in L^1(\nu)$. Identifying $\varphi(w_i)$ with $f_i$ and
$\psi$ with $g$ yields exactly \eqref{eq:WDFO_dual}--\eqref{eq:PhiSPO}.

Finally, for any fixed $f$, the constraint \eqref{eq:PhiSPO} implies $g(y)\le \min_i(c_i(y)-f_i)$ pointwise. Since
the objective in \eqref{eq:WDFO_dual} is increasing in $g$, the best feasible choice is $g=g_f$ as defined in
\eqref{eq:g_star}, which gives \eqref{eq:WDFO_dual_reduced}.\hfill\Halmos
\end{proof}

\begin{proof}{\bfseries Proof of Theorem \ref{thm:dfo_mccann_one_sided}}

Let $\mathrm{Id}:\bbR^d\to\bbR^d$ denote the identity map, and define the coupling
$\gamma_t^*:=(\mathrm{Id},\pi_t)_{\#}\gamma^*\in\Gamma(\mu,\nu_t)$.

By definition,
\[
\wdfo(\mu,\nu_t)=\inf_{\gamma\in\Gamma(\mu,\nu_t)}\int \lspo(u,w)\,d\gamma(u,w).
\]
Since $\gamma_t^*:=(\mathrm{Id},\pi_t)_{\#}\gamma^*\in\Gamma(\mu,\nu_t)$ is feasible, we have
\begin{equation}\label{eq:step1_feasible}
\wdfo(\mu,\nu_t)\le \int \lspo(u,w)\,d\gamma_t^*(u,w).
\end{equation}

By the push-forward identity, for any measurable function $\varphi$,
\[
\int \varphi(u,w)\,d(\mathrm{Id},\pi_t)_{\#}\gamma^*(u,w)=\int \varphi\big(u,\pi_t(u,v)\big)\,d\gamma^*(u,v).
\]
Applying this with $\varphi(u,w)=\lspo(u,w)$ yields
\begin{equation}\label{eq:step2_pushforward}
\int \lspo(u,w)\,d\gamma_t^*(u,w)=\int \lspo\big(u,\pi_t(u,v)\big)\,d\gamma^*(u,v).
\end{equation}
Combining \eqref{eq:step1_feasible} and \eqref{eq:step2_pushforward} proves the first inequality in the theorem,
\[
\wdfo(\mu,\nu_t)\le \int \lspo\big(u,\pi_t(u,v)\big)\,d\gamma^*(u,v).
\]

Define the value function $z(x):=\min_{w\in S} w^\top x$, so $z(x)=x^\top w^*(x)$.
Using $\lspo(u,y)=y^\top w^*(u)-z(y)$ and $\pi_t(u,v)=(1-t)u+tv$, we obtain
\begin{align}
\lspo\big(u,\pi_t(u,v)\big)
&=\big((1-t)u+tv\big)^\top w^*(u)-z\big((1-t)u+tv\big) \notag\\
&=(1-t)\,u^\top w^*(u)+t\,v^\top w^*(u)-z\big((1-t)u+tv\big) \notag\\
&=(1-t)\,z(u)+t\,v^\top w^*(u)-z\big((1-t)u+tv\big). \label{eq:step3_expand}
\end{align}

Since $z(x)=\min_{w\in S} w^\top x$ is the pointwise minimum of linear functions, $z$ is concave. Hence,
for all $u,v$ and $t\in[0,1]$,
\begin{equation}\label{eq:step4_concave}
z\big((1-t)u+tv\big)\ge (1-t)\,z(u)+t\,z(v).
\end{equation}
Substituting \eqref{eq:step4_concave} into \eqref{eq:step3_expand} yields
\begin{align}
\lspo\big(u,\pi_t(u,v)\big)
&\le (1-t)\,z(u)+t\,v^\top w^*(u)-\big((1-t)\,z(u)+t\,z(v)\big) \notag\\
&= t\big(v^\top w^*(u)-z(v)\big) \notag\\
&= t\,\lspo(u,v). \label{eq:step4_pointwise}
\end{align}

Integrating \eqref{eq:step4_pointwise} with respect to $\gamma^*$ gives
\begin{equation}\label{eq:step5_integrate}
\int \lspo\big(u,\pi_t(u,v)\big)\,d\gamma^*(u,v)
\le t\int \lspo(u,v)\,d\gamma^*(u,v).
\end{equation}
Since $\gamma^*$ is an optimizer for $\wdfo(\mu,\nu)$,
\begin{equation}\label{eq:step5_optimality}
\int \lspo(u,v)\,d\gamma^*(u,v)=\wdfo(\mu,\nu).
\end{equation}
Combining \eqref{eq:step5_integrate} and \eqref{eq:step5_optimality} yields
\[
\int \lspo\big(u,\pi_t(u,v)\big)\,d\gamma^*(u,v)\le t\,\wdfo(\mu,\nu).
\]

Putting together Step~1--Step~5, we obtain
\[
\wdfo(\mu,\nu_t)\le \int \lspo\big(u,\pi_t(u,v)\big)\,d\gamma^*(u,v)\le t\,\wdfo(\mu,\nu),
\]
and in particular $\wdfo(\mu,\nu_t)\le t\,\wdfo(\mu,\nu)$.
\hfill\Halmos
\end{proof}

\section{Details for Numerical Study}\label{append:numerical}

\subsubsection{Dataset and preprocessing}\label{sec:parkinsons_setup}

\paragraph{Dataset.}
We use the \emph{Parkinson's Telemonitoring} dataset (UCI repository), which contains $5{,}875$ total observations
from $42$ Parkinson's disease patients (unique \texttt{subject\#}). Each observation includes a
measurement time \texttt{test\_time} (days from study start), two clinical severity measures
\texttt{motor\_UPDRS} and \texttt{total\_UPDRS}, and multiple voice-related features. The data are longitudinal: the
same patient is measured repeatedly over time, which enables both population-level distribution comparisons and
patient-tracked evaluations.

\paragraph{Time snapshots via windowing.}
Because measurements occur at irregular times, we approximate three ``snapshots'' of the population distribution at
days $t\in\{50,100,150\}$ by using an integer-rounded time and a symmetric window of width $\pm 5$ days. Concretely,
we define
\[
\mathcal{D}_t := \{ \text{observations with } \lfloor \texttt{test\_time}\rceil \in [t-5,t+5]\}.
\]
To reduce within-patient correlation and to enable patient tracking, we select at most one observation per patient in
each window: for each \texttt{subject\#}, we keep the record whose time is closest to $t$. This yields
$n_{50}=41$, $n_{100}=38$, and $n_{150}=39$ patient-representative observations in the three windows, with
$34$ patients present in all three windows and $37$ patients present in both the day-$50$ and day-$100$ windows.

\paragraph{Severity distributions for visualization.}
For visualization of distribution shift and interpolations, we focus on the 2D severity vector
\[
z := (\texttt{motor\_UPDRS},\texttt{total\_UPDRS})\in\mathbb{R}^2
\]
and estimate its density by a Gaussian KDE. This produces smooth heatmaps for comparing the day-$50$, day-$100$, and
day-$150$ severity distributions and for visualizing interpolated distributions.

\subsubsection{Decision-focused downstream problem (care-plan selection)}

\paragraph{Cost vectors.}
To evaluate decision-focused discrepancy, we embed each patient observation into a 4D cost vector
\[
y = (y_{\mathrm{ex}},y_{\mathrm{nurse}},y_{\mathrm{rest}},y_{\mathrm{speech}})\in\mathbb{R}^4,
\]
constructed from two severity measures and two additional covariates:
\[
(\texttt{motor\_UPDRS},\texttt{total\_UPDRS},\texttt{age},\texttt{PPE}).
\]
We apply a min--max normalization on each coordinate (using the full dataset's range), and then negate:
\[
y_k := -\frac{f_k-\min(f_k)}{\max(f_k)-\min(f_k)} \in [-1,0].
\]
This normalization yields a stylized but interpretable model: larger disease burden (or higher-risk covariate values)
correspond to more negative coefficients, i.e., larger marginal benefit from allocating hours to that intervention
type. (Our objective is to \emph{minimize} $y^\top w$, so more negative coefficients incentivize allocating more
hours.)

\paragraph{Decisions.}
The decision is a weekly care plan
\[
w=(h_{\mathrm{exercise}},h_{\mathrm{nursing}},h_{\mathrm{rest}},h_{\mathrm{speech}})\in\mathbb{R}^4,
\]
with each coordinate interpreted as hours per week allocated to a care modality: physical exercise/physiotherapy,
nursing/care coordination, sleep/rest optimization, and speech therapy.

\paragraph{Feasible region.}
We use a compact polyhedral region whose extreme points correspond to five clinically interpretable care plans (hours
per week), each summing to $10$ total hours:
\[
\begin{aligned}
w^{(1)}&=(3,3,3,1) &&\text{(balanced)}\\
w^{(2)}&=(6,2,2,0) &&\text{(exercise-focused)}\\
w^{(3)}&=(2,6,2,0) &&\text{(nursing-focused)}\\
w^{(4)}&=(2,2,6,0) &&\text{(rest-focused)}\\
w^{(5)}&=(3,3,2,2) &&\text{(speech add-on)}.
\end{aligned}
\]
We set $S:=\mathrm{conv}\{w^{(1)},\dots,w^{(5)}\}$. Since the downstream objective is linear, the oracle
$w^*(x)\in\arg\min_{w\in S}w^\top x$ (Equation~\eqref{eq:downstream_oracle}) always returns an extreme point and
therefore is piecewise constant, matching the structural assumptions used in our theory.

\paragraph{Interpretation.}
When $y$ is more negative in the ``exercise'' coordinate (high motor severity), the optimizer tends to pick a plan
with larger $h_{\mathrm{exercise}}$, and similarly for nursing, rest, and speech. Operationally, $y^\top w$ can be
interpreted as a negative proxy for expected symptom improvement: minimizing $y^\top w$ corresponds to selecting the
care plan expected to yield the greatest benefit under the patient's true clinical state.

\subsubsection{Geometric vs decision-focused discrepancy between day 50 and day 150}

\paragraph{Decision-blind Wasserstein distances on severity.}
On the 2D severity space $z=(\texttt{motor\_UPDRS},\texttt{total\_UPDRS})$, the day-$50$ and day-$150$ empirical
distributions are separated by $W_1 \approx 4.339$ and $W_2 \approx 5.090$. These values reflect a nontrivial shift
in severity over approximately $100$ days and motivate constructing intermediate distributions via interpolation.

\paragraph{Decision-focused distances on care-plan costs.}
Let $\hat\mu_{50}$ and $\hat\nu_{150}$ denote the empirical distributions of the 4D cost vectors at day $50$ and day
$150$. Using the SPO loss $\ell_{\mathrm{SPO}}$ as the transport cost (Definition~\ref{def:df_div}), we compute the
optimistic and robust DF distances (Definition~\ref{def:df_dist}) and the i.i.d.\ regret:
\[
\wdfo(\hat\mu_{50},\hat\nu_{150}) \approx 0.0061,\qquad
R(\hat\mu_{50},\hat\nu_{150}) \approx 0.4566,\qquad
\wdfr(\hat\mu_{50},\hat\nu_{150}) \approx 0.8605.
\]
The large gap between $\wdfo$ and $\wdfr$ emphasizes that \emph{how} the two marginals are coupled can drastically
change the expected decision degradation. In particular, the very small optimistic distance indicates that there
exists a coupling aligning day-$50$ and day-$150$ outcomes so that the downstream care plan remains nearly optimal on
average, while the robust distance quantifies a worst-case coupling that amplifies decision degradation.

\medskip

\begin{table}[ht]
\centering
\begin{tabular}{lcc}
\hline
Quantity (day~50 vs day~150) & Value & Interpretation\\
\hline
$W_1$ on (motor,total) & 4.3391 & geometric shift in severity\\
$W_2$ on (motor,total) & 5.0900 & quadratic transport shift\\
$\wdfo(\hat\mu_{50},\hat\nu_{150})$ & 0.0061 & optimistic DF distance (best-case alignment)\\
$R(\hat\mu_{50},\hat\nu_{150})$ & 0.4566 & i.i.d.\ regret (independent coupling)\\
$\wdfr(\hat\mu_{50},\hat\nu_{150})$ & 0.8605 & robust DF distance (worst-case alignment)\\
\hline
\end{tabular}
\caption{Geometric Wasserstein distances on the 2D severity space and decision-focused distances based on the SPO loss between day~50 and day~150 empirical distributions. The DF distances use the 4D cost vectors (motor\_UPDRS, total\_UPDRS, age, PPE) and the care-plan feasible region $S$.}
\label{tab:parkinsons_dfgeo}
\end{table}

\subsubsection{Coupling-induced interpolations and evaluation at day 100}

\paragraph{Interpolants.}
Following Section~\ref{sec:df_interpolations}, each coupling between day~50 and day~150 induces an intermediate
distribution at day~100 via the linear interpolation map $\pi_{1/2}(x,y)=(x+y)/2$. We visualize interpolated severity
densities by pushing forward the coupling through $\pi_{1/2}$ and projecting onto the (motor,total) coordinates.
Qualitatively, optimistic DF couplings produce interpolants that preserve decision regions, whereas robust DF couplings
spread mass toward adverse regions, yielding conservative intermediate distributions.

\paragraph{Patient-tracked prediction task.}
To evaluate decision impact at the individual level, we use the $37$ tracked patients present in both the day-$50$
and day-$100$ windows. For a given coupling $\gamma$ between day~50 and day~150, we construct a patient-specific
day-$100$ cost prediction by conditioning on the patient's day-$50$ state:
\[
\widehat{Y}_{100}(i)
:= \tfrac12\, Y_{50}(i) + \tfrac12\, \mathbb{E}_{\gamma}\!\left[\,Y_{150}\mid X_{50}=Y_{50}(i)\right].
\]
We then evaluate the patient-level decision loss against the patient's \emph{true} day-$100$ cost vector using the SPO
loss:
\[
\ell_{\mathrm{SPO}}\!\left(\widehat{Y}_{100}(i),\,Y_{100}(i)\right).
\]
This evaluation isolates the impact of coupling assumptions on \emph{downstream care-plan quality}.

\paragraph{Baselines.}
We compare decision-focused couplings against two decision-blind baselines:
(i) the independent coupling $\hat\mu_{50}\otimes\hat\nu_{150}$ (no persistence), and
(ii) the standard $W_2$ coupling minimizing squared Euclidean cost in feature space. We report mean patient-tracked
SPO loss (smaller is better).

\begin{table}[ht]
\centering
\begin{tabular}{lc}
\hline
Method & Mean tracked SPO loss (50$\to$100)\\
\hline
Unregularized optimistic DF coupling (DFO) & 0.0167\\
Decision-blind $W_2$ coupling in 4D feature space & 0.0231\\
Independent coupling ($\mu\otimes\nu$) & 0.0945\\
Unregularized robust DF coupling (DFR) & 0.2311\\
Decision-blind $W_2$ coupling on (motor,total) only & 0.1165\\
\hline
\end{tabular}
\caption{Patient-tracked evaluation at day~100: mean SPO loss incurred by decisions optimized for predicted costs at day~100 (built from a day~50 observation and a coupling between day~50 and day~150), evaluated against the patient’s true day~100 costs. Smaller is better.}
\label{tab:parkinsons_tracked_baselines}
\end{table}

\paragraph{Insights: why decision-focused couplings matter.}
Table~\ref{tab:parkinsons_tracked_baselines} highlights three key findings.
First, the optimistic DF coupling achieves the smallest mean SPO loss ($\approx 0.0167$), improving upon the
decision-blind $W_2$ coupling in the same 4D space. This demonstrates that geometric matching is not automatically
aligned with downstream decision quality.
Second, the independent coupling performs substantially worse ($\approx 0.0945$), indicating that ignoring temporal
dependence can materially degrade downstream decisions.
Third, the robust DF coupling yields a much larger mean loss ($\approx 0.2311$). This is expected: robust coupling is designed to \emph{stress test} decision degradation under worst-case alignment rather than to forecast the most likely evolution. In preventive healthcare terms, $\wdfr$ provides a conservative upper bound on decision risk that can be used to decide whether intermediate re-measurement (here, around day~100) is warranted.

\paragraph{Tracked transitions vs independence.}
Because this dataset includes patient identifiers, we can also compute SPO loss using the \emph{true} tracked
alignment from day~50 to day~150 (matching identical \texttt{subject\#}). The mean tracked SPO loss over the $38$ patients observed in both windows is $\approx 0.0359$, with a median of $0$.
In contrast, the independent regret $R(\hat\mu_{50},\hat\nu_{150})\approx 0.4566$ is an order of magnitude larger. This gap empirically confirms the motivating phenomenon from Section~\ref{sec:decision_focused_setting}: temporal
dependence (persistence) matters, and an i.i.d.\ coupling can substantially overestimate decision degradation when
patients evolve smoothly.

\subsubsection{Entropy-regularized DF couplings and the $\varepsilon$ trade-off}

\paragraph{Regularized couplings.}
We compute the entropy-regularized optimistic and robust DF couplings from Definition~\ref{def:ent_df} using a
log-domain Sinkhorn algorithm, with $\varepsilon\in\{0.1,0.2,0.5,1,2,5\}$. As $\varepsilon$ increases, the KL penalty
forces $\gamma$ toward the independent coupling $\hat\mu_{50}\otimes\hat\nu_{150}$, yielding smoother couplings and
more stable estimates at the cost of reduced structure.

\paragraph{Patient-tracked day-100 decision quality vs $\varepsilon$.}
Table~\ref{tab:parkinsons_eps} reports mean tracked SPO loss at day~100 for E-DFO and E-DFR couplings. Two monotone
trends emerge:
(i) E-DFO losses increase with $\varepsilon$, approaching the i.i.d.\ baseline, and
(ii) E-DFR losses decrease with $\varepsilon$, also approaching the i.i.d.\ baseline. This behavior is consistent with the interpretation of entropy regularization as interpolating between extreme couplings and independence.

\begin{table}[ht]
\centering
\begin{tabular}{lcccccc}
\hline
$\varepsilon$ & 0.1 & 0.2 & 0.5 & 1 & 2 & 5\\
\hline
E-DFO mean tracked SPO & 0.0167 & 0.0167 & 0.0340 & 0.0366 & 0.0673 & 0.0785\\
E-DFR mean tracked SPO & 0.2930 & 0.2537 & 0.2206 & 0.1756 & 0.1126 & 0.0960\\
\hline
\end{tabular}
\caption{Effect of entropy regularization on patient-tracked decision quality at day~100. E-DFO corresponds to the entropy-regularized optimistic coupling and E-DFR to the entropy-regularized robust coupling (Definition~\ref{def:ent_df}). As $\varepsilon$ increases, both regularized couplings approach the independent coupling.}
\label{tab:parkinsons_eps}
\end{table}

Figure \ref{fig:epislon} further visualize the distribution change as the value of $\epsilon$ increase.

\begin{figure}[h]
    \centering
    \includegraphics[width=\linewidth]{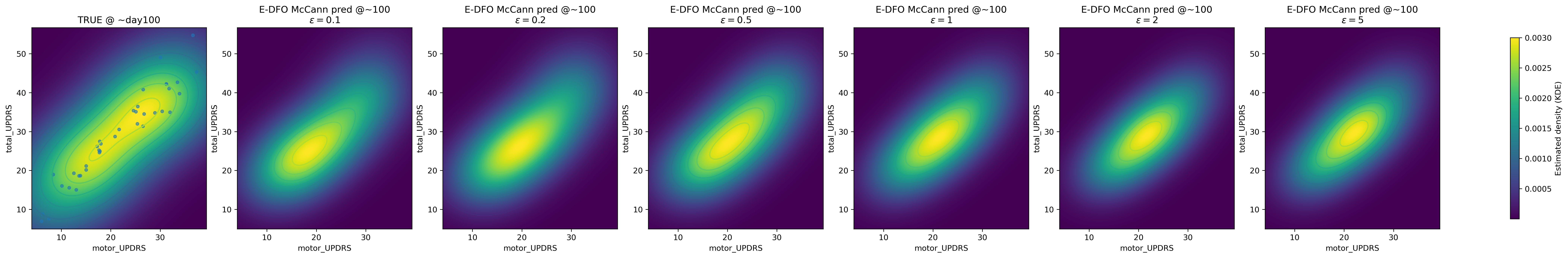}
    \includegraphics[width=\linewidth]{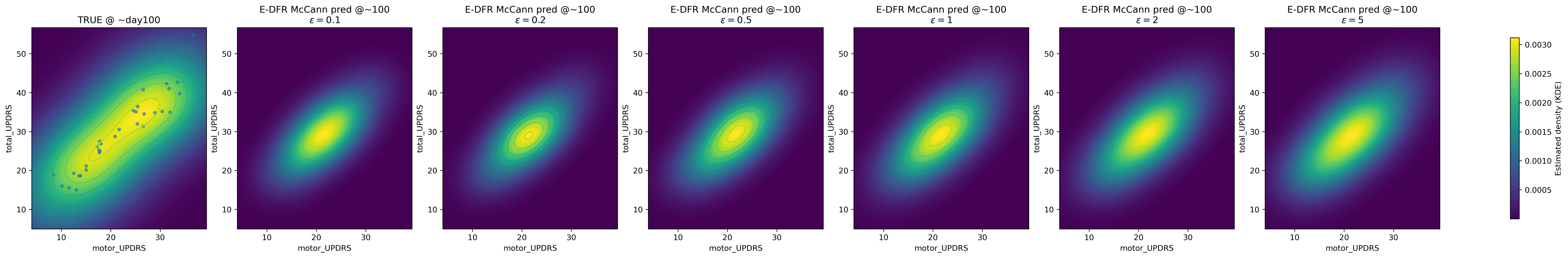}
    \caption{Optimistic and Robust DF interpolations under different values of regulaization $\epsilon$. }
    \label{fig:epislon}
\end{figure}

In Figure \ref{fig:epislon}, the first row shows the optimistic DF divergence with different values of $\epsilon$. The shape gets further away from the true density as $\epsilon$ increase. The second row shows the robust DF divergence. Since the real-world case often tend to minimum tranport, the robust case is not as close to the true density as the optimistic case. $\varepsilon$ provides a tunable bias--variance trade-off in coupling selection.
Small $\varepsilon$ yields highly structured couplings: E-DFO closely matches the unregularized optimistic coupling
and provides strong patient-specific predictions (low SPO loss), while E-DFR yields conservative, high-risk
predictions suitable for stress testing. Larger $\varepsilon$ shrinks both couplings toward independence, reducing sensitivity to sampling noise but discarding persistence information. 

\subsubsection{Finite-sample behavior: estimation error vs sample size}

\paragraph{Experiment design.}
To empirically study estimation error, we treat the full patient-representative day-$50$ and day-$150$ samples as
reference measures and repeatedly subsample $n$ patients from each window without replacement. For each subsample, we
recompute $\widehat{\wdfo}$ and $\widehat{\wdfr}$ and report absolute errors relative to the reference values.

\paragraph{Results.}
Table~\ref{tab:parkinsons_sample_error} reports representative results (50 trials per $n$). The estimation error
decreases rapidly as $n$ grows, and the robust distance exhibits larger variability, reflecting the adversarial
nature of the robust coupling. Overall, the empirical trend aligns with the dimension-independent convergence
behavior predicted by our theory: despite the 4D cost vector, the downstream oracle $w^*(\cdot)$ takes values in a
small finite set of extreme points, so estimation depends primarily on how well the sample resolves the induced
decision regions rather than on covering the ambient space.

\begin{table}[ht]
\centering
\begin{tabular}{rcccccc}
\hline
$n$ & $|\widehat{\wdfo}-\wdfo|$ (mean) & (10\%,90\%) & $|\widehat{\wdfr}-\wdfr|$ (mean) & (10\%,90\%) & trials\\
\hline
5 & 0.1527 & (0.0061,0.3361) & 0.2177 & (0.0392,0.4935) & 50\\
10 & 0.0763 & (0.0239,0.1533) & 0.1493 & (0.0139,0.3439) & 50\\
18 & 0.0286 & (0.0032,0.0544) & 0.0980 & (0.0195,0.2134) & 50\\
35 & 0.0075 & (0.0009,0.0175) & 0.0297 & (0.0085,0.0659) & 50\\
\hline
\end{tabular}
\caption{Finite-sample estimation error for the optimistic and robust DF distances in the Parkinson’s study. We treat the full patient-representative samples at day~50 and day~150 as the reference measures and repeatedly subsample $n$ patients from each side (50 trials per $n$). Reported are the absolute errors relative to the reference values.}
\label{tab:parkinsons_sample_error}
\end{table}

 \end{APPENDICES}

\end{document}